  \newcommand{\calA}{\mathcal{A}}
  \newcommand{\calC}{\mathcal{C}}
  \newcommand{\calG}{\mathcal{G}}
  \newcommand{\calK}{\mathcal{K}}
  \newcommand{\calN}{\mathcal{N}}
  \newcommand{\calP}{\mathcal{P}}
  \newcommand{\calQ}{\mathcal{Q}}
  \newcommand{\calT}{\mathcal{T}}
  \newcommand{\calX}{\mathcal{X}}
  \newcommand{\calY}{\mathcal{Y}}
  \newcommand{\FF}{\mathbb{F}}
  \newcommand{\RR}{\mathbb{R}}
  \newcommand{\gothic}{\mathfrak}
  \newcommand{\ga}{{\gothic a}}
  \newcommand{\gb}{{\gothic b}}
  \newtheorem{theorem}{Theorem}[section]
  \newtheorem{proposition}[theorem]{Proposition}
  \newtheorem{corollary}[theorem]{Corollary}
  \newtheorem{lemma}[theorem]{Lemma}
  \newtheorem{introthm}{Theorem}
  \newtheorem{introcor}[introthm]{Corollary}
  \theoremstyle{definition}
  \newtheorem{definition}[theorem]{Definition}
  \newtheorem*{claim*}{Claim}
  \newtheorem*{question*}{Question}
  \newtheorem*{answer*}{Answer}
  \newtheorem*{application*}{Application}
  \theoremstyle{remark}
  \newtheorem{remark}[theorem]{Remark}
  \newtheorem*{remark*}{Remark}
  \newcommand{\secref}[1]{Section~\ref{Sec:#1}}
  \newcommand{\thmref}[1]{Theorem~\ref{Thm:#1}}
  \newcommand{\lemref}[1]{Lemma~\ref{Lem:#1}}
  \newcommand{\propref}[1]{Proposition~\ref{Prop:#1}}
  \newcommand{\remref}[1]{Remark~\ref{Rem:#1}}
  \newcommand{\figref}[1]{Figure~\ref{Fig:#1}}
  \newcommand{\defref}[1]{Definition~\ref{Def:#1}}
  \newcommand{\eqnref}[1]{Equation~\ref{Eq:#1}}
  \DeclareMathOperator{\twist}{twist}
  \DeclareMathOperator{\Ext}{Ext}
  \DeclareMathOperator{\diam}{diam}
  \DeclareMathOperator{\I}{i}
  \DeclareMathOperator{\Area}{Area}
  \newcommand{\emul}{\stackrel{{}_\ast}{\asymp}}
  \newcommand{\gmul}{\stackrel{{}_\ast}{\succ}}
  \newcommand{\lmul}{\stackrel{{}_\ast}{\prec}}
  \newcommand{\eadd}{\stackrel{{}_+}{\asymp}}
  \newcommand{\gadd}{\stackrel{{}_+}{\succ}}
  \newcommand{\ladd}{\stackrel{{}_+}{\prec}}
  \newcommand{\Aut}{\ensuremath{\operatorname{Aut}}\xspace} 
  \newcommand{\Out}{\ensuremath{\operatorname{Out}}\xspace} 
  \newcommand{\Inn}{\ensuremath{\operatorname{Inn}}\xspace} 
  \newcommand{\s}{\ensuremath{S}\xspace} 
  \newcommand{\T}{\ensuremath{\calT(S)}\xspace} 
  \newcommand{\cc}{\ensuremath{\calC(S)}\xspace} 
  \newcommand{\Teich}{{Teichm\"uller }} 
  \newcommand{\param}{{\mathchoice{\mkern1mu\mbox{\raise2.2pt\hbox{$
  \centerdot$}}
  \mkern1mu}{\mkern1mu\mbox{\raise2.2pt\hbox{$\centerdot$}}\mkern1mu}{
  \mkern1.5mu\centerdot\mkern1.5mu}{\mkern1.5mu\centerdot\mkern1.5mu}}}
  \renewcommand{\setminus}{{\smallsetminus}}
  \newcommand{\st}{\mathbin{\mid}} 
  \newcommand{\from}{\colon\thinspace} 
  \newcommand{\ep}{\epsilon}
  \newcommand{\GT}{{\calG_T}} 
  \newcommand{\GL}{{\calG_L}} 
  \newcommand{\dL}{{d_L}} 
  \newcommand{\dT}{{d_T}}
  \newcommand{\balpha}{{\overline \alpha}}
  \newcommand{\bbeta}{{\overline \beta}}
\begin{document}


  \title    {Bounded combinatorics and the Lipschitz metric on
             Teichm\"uller space}
  \author   {Anna Lenzhen}
  \author   {Kasra Rafi}
  \author   {Jing Tao}
  

  \begin{abstract} 
    
    Considering the \Teich space of a surface equipped with Thurs\-ton's
    Lipschitz metric, we study geodesic segments whose endpoints have
    bounded combinatorics. We show that these geodesics are cobounded, and
    that the closest-point projection to these geodesics is strongly
    contracting. Consequently, these geodesics are stable. Our main tool is
    to show that one can get a good estimate for the Lipschitz distance by
    considering the length ratio of finitely many curves.

  \end{abstract}
  
  \maketitle
  

\section{Introduction}
 
  \label{Sec:Intro}

  Let \T be the \Teich space of a surface \s of finite type, that is, the
  space of marked hyperbolic (or conformal) structures on \s. In
  \cite{thurston:MSM}, Thurston introduced an asymmetric metric $\dL$ for
  \T which we refer to as the Lipschitz metric. For two marked hyperbolic
  structures $x$ and $y$,  $\dL(x, y)$ is defined to be the logarithm of
  the infimum of Lipschitz constants of any homeomorphism from $x$ to $y$
  that is homotopic to the identity. The geometry of the Lipschitz metric
  is very rich, as Thurston shows in his paper. However, many aspects of it
  remain unexamined. 
  
  It is known that \Teich space equipped with the \Teich metric or the
  Lipschitz metric is not Gromov hyperbolic because the thin parts of \T
  have a product like structure (See \cite{minsky:PR} and \cite{rafi:LT}).
  However, certain \Teich geodesics exhibit behaviors that resemble that of
  geodesics in a hyperbolic space. Namely, the closest-point projection to
  these geodesics is strongly contracting (\cite{minsky:QP}). In this
  paper, we investigate whether a similar phenomenon is also present in the
  Lipschitz metric. 
   
  We use tools that have been developed and successfully applied in the
  study of \Teich geodesics, namely the curve graphs of different
  subsurfaces of \s. When $x$ is in the thick part, the geometry of $x$ can
  be coarsely encoded by its associated short marking $\mu_x$, which is a
  finite collection of simple closed curves. Given $x, y \in \T$, there are
  many results relating the combinatorics of markings $\mu_x$ and $\mu_y$
  to the behavior of the \Teich geodesic connecting $x$ and $y$. (See
  \cite{rafi:SC, rafi:CM, rafi:LT}, or \cite{rafi:HT} for a review of some
  of these results in one paper.) 
   
  Contrasting with the \Teich metric, there is no unique geodesic in the
  Lipschitz metric from $x$ to $y$. But one hopes that qualitative
  information about a Lipschitz geodesic can still be extracted from the
  end markings $\mu_x$ and $\mu_y$. The first natural situation to consider
  is when $\mu_x$ and $\mu_y$ have bounded combinatorics. That is when, for
  every proper subsurface $Y$ of \s, the distance $d_Y(\mu_x,\mu_y)$ in the
  curve graph of $Y$ between the projections of $\mu_x$ and $\mu_y$ to $Y$
  is uniformly bounded (see \defref{BC}). For the \Teich metric, this is in
  fact equivalent to the \Teich geodesic between $x$ and $y$ being
  cobounded (see \cite{rafi:SC} and \cite{rafi:HT}).   

  Our first result is that if $\mu_x$ and $\mu_y$ have bounded
  combinatorics, then every Lipschitz geodesic from $x$ to $y$ is
  cobounded. In fact, they are all well approximated by the unique \Teich
  geodesic connecting $x$ and $y$.
  
  \begin{introthm}[Bounded combinatorics implies cobounded]
  \label{Thm:IntroFellow}
  
     Assume, for $x,y \in \T$ in the thick part of \Teich space, that
     $d_Y(\mu_x, \mu_y)$ is uniformly bounded for every proper subsurface
     $Y \subset S$. Then any geodesic $\GL$ in the Lipschitz metric
     connecting $x$ to $y$ fellow travels the \Teich geodesic $\GT$ with
     endpoints $x$ and $y$. Consequently, $\GL$ is cobounded. 
  
  \end{introthm}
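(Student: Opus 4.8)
The strategy is to sandwich the Lipschitz geodesic between two comparisons with the Teichmüller geodesic $\GT$. The key input we need is the paper's main technical tool: for surfaces $x,y$ in the thick part, the Lipschitz distance $\dL(x,y)$ is coarsely computed by the length ratio of finitely many curves — in our bounded-combinatorics setting, one expects the relevant curves to be a bounded-size collection that the Teichmüller geodesic $\GT$ itself makes short at various times. So first I would recall (from \cite{rafi:SC,rafi:HT}) that bounded combinatorics of $\mu_x,\mu_y$ forces $\GT$ to be cobounded: it stays in the thick part, and at every time $t$ the short marking $\mu_{\GT(t)}$ has a uniformly bounded intersection pattern with both $\mu_x$ and $\mu_y$. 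This gives a well-controlled family of ``witness curves'' $\{\alpha_i\}$ along $\GT$.

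\medskip
Next, the lower bound on $\dL$: since the Lipschitz metric dominates the length-ratio metric (for any curve $\gamma$, $\dL(x,y) \geq \log \bigl(\ell_y(\gamma)/\ell_x(\gamma)\bigr)$), any point $z = \GL(s)$ on a Lipschitz geodesic satisfies $\Ext_z$– and hyperbolic-length comparisons with $x$ and $y$ along every witness curve. Using additivity of length along $\GL$ — i.e.\ $\dL(x,z)+\dL(z,y)=\dL(x,y)$ — one derives that for each witness curve $\alpha_i$ the quantity $\log \ell_{\GL(s)}(\alpha_i)$ is squeezed between the values it takes at the two endpoints, up to bounded error. This pins $\GL(s)$ to lie in a bounded neighborhood of the sublevel set where all the relevant $\alpha_i$ are simultaneously short, which by Minsky's product-region / the structure of thick geodesics is a bounded neighborhood of a point of $\GT$. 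The upper bound direction — producing an actual Lipschitz-short map realizing $\dL$ along $\GT$, hence showing every point of $\GT$ is within bounded Lipschitz distance of $\GL$ — uses Thurston's stretch-map technology together with the finite-curve estimate to certify that $\dL\bigl(\GT(t),\GL(s)\bigr)$ is small for the matching parameter.

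\medskip
Assembling these: for each $s$ we locate the time $t(s)$ on $\GT$ matching the ``progress'' of $\GL(s)$ (measured by $\dL(x,\GL(s))$, rescaled), show $\GL(s)$ and $\GT(t(s))$ agree on all witness curves up to bounded multiplicative error, and conclude $d_{\calT}\bigl(\GL(s),\GT(t(s))\bigr)$ — equivalently $\dL$ in both directions — is uniformly bounded. Coboundedness of $\GL$ is then immediate from coboundedness of $\GT$.

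\medskip
The main obstacle I anticipate is the \emph{upper bound}: controlling $\dL$ from above requires building (or quoting from Thurston) an explicit almost-optimal Lipschitz map between thick surfaces given only that finitely many curve-length ratios are controlled, and verifying that the finitely-many-curves estimate is two-sided with uniform constants. The asymmetry of $\dL$ also demands care — one must run the comparison argument in both orders $(x\to\GL(s)\to y)$ — and the matching of parameters $s\leftrightarrow t(s)$ must be shown to be a coarse reparametrization (coarsely monotone, coarsely surjective), which is where additivity of length along geodesics in both metrics does the essential bookkeeping.
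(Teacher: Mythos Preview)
Your approach differs substantially from the paper's, and there is a genuine gap in the squeeze argument. The claim that additivity along $\GL$ plus the length-ratio lower bound forces $\log \ell_{\GL(s)}(\alpha_i)$ to lie between its endpoint values up to bounded error is not correct. From $\dL(x,z)+\dL(z,y)=\dL(x,y)$ and $\dL(a,b)\ge \log\bigl(\ell_b(\gamma)/\ell_a(\gamma)\bigr)$ one obtains only
\[
\log\ell_y(\alpha_i)-\dL(z,y)\;\le\;\log\ell_z(\alpha_i)\;\le\;\log\ell_x(\alpha_i)+\dL(x,z),
\]
a window whose width is $\dL(x,y)-\log\bigl(\ell_y(\alpha_i)/\ell_x(\alpha_i)\bigr)$. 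This is bounded only when $\alpha_i$ is itself a candidate curve from $x$ to $y$; for a witness curve short at an interior time of $\GT$ the window width is comparable to $\dL(x,y)$. You therefore get tight control on at most one or two curves---not on the full family needed to pin $z$ near a point of $\GT$---and nothing in the argument prevents $z=\GL(s)$ from entering the thin part. Once $z$ may be thin, neither the product-region picture nor the ``all relevant $\alpha_i$ simultaneously short'' description locates $z$ near $\GT$.

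The paper takes a different route. It first proves that the Lipschitz closest-point projection $\pi_{\GT}$ is strongly contracting (\thmref{IntroProjection}), using the candidate-curve estimate (\thmref{IntroMarking}) to show that $\pi_{\GT}(z)$ lies near the balanced time of any curve in the short marking $\mu_z$, and then comparing balanced times of nearby points via an intersection-number inequality (\lemref{Minsky}). Fellow-traveling of $\GL$ with $\GT$ is then deduced by a standard short-cut argument: any subarc of $\GL$ lying outside an $r$-neighborhood of $\GT$ projects to a set of bounded diameter, so by the triangle inequality (with \lemref{Asymmetry} to reverse one leg) that subarc has bounded length. Crucially, the projection argument makes no a priori thickness assumption on $z$; coboundedness of $\GL$ is a \emph{consequence} of fellow-traveling, not an input.
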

  
  To restate \thmref{IntroFellow} more succinctly is to say that $\GT$,
  viewed as a set in the Lipschitz metric, is quasi-convex. A standard
  argument for showing a set is quasi-convex is to show that the
  closest-point projection to the set is strongly contracting. Indeed, this
  is how we prove \thmref{IntroFellow}. 
  
  \begin{introthm}[Lipschitz projection to \Teich geodesics]
  \label{Thm:IntroProjection} 
  
     Let $\GT$ be a cobounded Teich\-m\"uller geodesic. Then the image of a
     Lipschitz ball disjoint from $\GT$ under the closest-point projection
     to $\GT$ (with respect to the Lipschitz metric) has uniformly bounded
     diameter. That is, the closest-point projection to $\GT$ is strongly
     contracting.
   
  \end{introthm}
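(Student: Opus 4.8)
The plan is to deduce \thmref{IntroProjection} from the length--ratio estimate advertised in the abstract, following the usual route for proving a set is strongly contracting: describe the closest--point projection combinatorially, show it contracts by a factor exponential in the distance to $\GT$, and sum this along a Lipschitz geodesic issuing from the centre of the ball. Parametrise $\GT=\{x_t\}$ by arclength. Coboundedness keeps $\GT$ in the thick part, so each $x_t$ has a short marking $\mu_t$ of uniformly bounded length, $d_Y(\mu_s,\mu_t)$ is uniformly bounded over all proper subsurfaces $Y$ and all $s,t$, and every curve $\gamma$ has a balance time $\tau_\gamma$ and a minimal flat length $m_\gamma$ with $\ell_\gamma(x_t)\emul m_\gamma\,e^{|t-\tau_\gamma|}$. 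Combining Thurston's formula $e^{d_L(w,x_t)}=\sup_\alpha \ell_\alpha(x_t)/\ell_\alpha(w)$ with the length--ratio estimate --- which replaces the supremum, up to a uniform additive error, by a maximum over a bounded collection of curves determined by $w$ and by $\GT$ --- and with the displayed length formula, one finds that $t\mapsto d_L(w,x_t)$ agrees up to a uniform additive error with a piecewise--linear ``$V$'' of slopes $\pm1$. Hence its minimum is attained at a coarsely unique time $t_*(w)$, with $x_{t_*(w)}$ within bounded Lipschitz distance of $\pi(w)$, and
\[
   d_L(w,x_t)\ \eadd\ \rho(w)+|t-t_*(w)|,\qquad \rho(w):=d_L(w,\GT).
\]
The same analysis applies to $t\mapsto d_L(x_t,w)$ (with $\mu_w$ replaced by $\mu_t$), and along $\GT$ the two directions are uniformly comparable, which reconciles the outgoing ball $B_L(w_0,R)$ with the ingoing distances that define $\pi$.

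Given a ball $B=B_L(w_0,R)$ disjoint from $\GT$, so $R\le\rho(w_0)$, and $w\in B$, choose a Lipschitz geodesic $\sigma$ from $w_0$ to $w$ (a Thurston stretch path), subdivide it into bounded--length pieces $u_0=w_0,\dots,u_N=w$ with $N\le\rho(w_0)+O(1)$ and $\rho(u_i)\ge\rho(w_0)-i-O(1)$, and reduce \thmref{IntroProjection} to the single contraction estimate
\[
  d_L(u,u')=O(1)\ \text{and}\ d_L(u,\GT)\ge\rho\ \Longrightarrow\ |t_*(u)-t_*(u')|\lesssim e^{-\rho},
\]
since then $\diam\pi(B)\lesssim\sum_i|t_*(u_i)-t_*(u_{i+1})|\lesssim\sum_i e^{-(\rho(w_0)-i)}=O(1)$. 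The $V$--profile alone gives only $|t_*(u)-t_*(u')|=O(1)$ through the triangle inequality, which is useless after summing, so the exponential gain must come from the geometry of $\T$ near the cobounded geodesic; producing it is the main obstacle. The mechanism I would try to make precise is this: since moving from $u$ to $u'$ changes every length by a bounded factor, $d_L(u,x_t)$ and $d_L(u',x_t)$ can be witnessed near $\GT$ by essentially the same bounded family of curves; if the balance times of these witnesses were displaced by $\Delta:=|t_*(u)-t_*(u')|$, the family would contain two curves of uniformly bounded length at $u$ (resp.\ at $u'$) whose balance times differ by about $\Delta$, hence --- by coboundedness together with the Hempel-type estimate relating intersection number to curve-complex distance --- with intersection number $\gtrsim e^{c\Delta}$, which by the collar lemma forces one of them to have length $\gtrsim e^{c(\Delta-\rho)}$ already at $u$, contradicting $\Delta\gg\rho$. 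Converting this qualitative contradiction into the quantitative bound $\Delta\lesssim e^{-\rho}$ --- equivalently, showing that at Lipschitz distance $\rho$ from $\GT$ the curves that can be short have balance times confined to a window of width $O(\rho)$ that slides at only exponentially small speed --- is where the precise form of the length--ratio estimate, the collar lemma, and the bounded combinatorics of $\GT$ have to be combined, and is the heart of the proof.

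Summing the contraction estimate along $\sigma$ gives $|t_*(w_0)-t_*(w)|=O(1)$ uniformly for $w\in B$, i.e.\ $\diam\pi(B)=O(1)$, which is \thmref{IntroProjection}. Finally, \thmref{IntroFellow} follows from the standard fact that a subset of a geodesic metric space whose closest--point projection is strongly contracting is quasi--convex, applied to $\GT$ inside the Lipschitz metric, with only routine extra care for its asymmetry and for identifying the Lipschitz and Teichm\"uller closest--point projections onto $\GT$.
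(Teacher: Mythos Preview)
Your proposal has a genuine gap, and you identify it yourself: the exponential contraction estimate $|t_*(u)-t_*(u')|\lesssim e^{-\rho}$ for $u,u'$ at bounded Lipschitz distance and at distance $\ge\rho$ from $\GT$ is never proved. You sketch a mechanism (intersection number $\gtrsim e^{c\Delta}$ forces a length lower bound at $u$) and then write that ``converting this qualitative contradiction into the quantitative bound $\Delta\lesssim e^{-\rho}$ \ldots\ is the heart of the proof.'' That is precisely the step that is missing. Worse, your own mechanism, made honest, only yields $\Delta=O(1)$: the intersection lower bound from \lemref{Minsky} together with the length estimate from \propref{ShortEquality} gives an inequality of the shape $d_L(u,u')\gadd \Delta + d_L(u,\GT)-O(1)$, which for $d_L(u,u')=O(1)$ and $d_L(u,\GT)\ge\rho$ forces $\Delta\le O(1)$, not $\Delta\lesssim e^{-\rho}$. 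There is no extra exponential gain hiding in the collar lemma or in bounded combinatorics here. Summing this $O(1)$ bound over your $N\asymp\rho$ subdivision points yields only $O(\rho)$, so the subdivision scheme collapses.

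The paper avoids the subdivision entirely and compares the centre $x$ of the ball to an arbitrary $y\in B_L(x,R)$ in a single step. Pick candidate curves $\alpha\in\mu_x$ and $\beta\in\mu_y$ witnessing the respective projections (\thmref{ShortMarking} and \lemref{Balanced}); if $|t_\alpha-t_\beta|\ge D$ then \lemref{Minsky} gives $\I(\alpha,\beta)\gmul e^{|t_\alpha-t_\beta|}\ell_{x_{t_\alpha}}(\alpha)\ell_{x_{t_\beta}}(\beta)$, and \propref{ShortEquality} applied at $y$ (this is the step your sketch is missing) gives $\ell_y(\alpha)\gmul \I(\alpha,\beta)\,\ell_y(\bbeta)$. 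Combining these,
\[
  d_L(x,y)\ \ge\ \log\frac{\ell_y(\alpha)}{\ell_x(\alpha)}\ \gadd\ |t_\alpha-t_\beta|+d_L(x,\GT)+\log\big(\ell_{x_{t_\beta}}(\beta)\,\ell_y(\bbeta)\big),
\]
and since $d_L(x,y)\le R<d_L(x,\GT)$ the first two terms cancel to give $|t_\alpha-t_\beta|=O(1)$ once one checks $\ell_{x_{t_\beta}}(\beta)\,\ell_y(\bbeta)\gmul 1$ (a short separate argument using that $\beta$ is a candidate curve at $y$). You dismissed this direct route because the bare $V$--profile plus triangle inequality does not control $\rho(y)-\rho(x)$ in the asymmetric metric; the point is that one should not work with the abstract $V$--profile but with the concrete candidate curves, so that \propref{ShortEquality} at $y$ supplies exactly the missing comparison without ever needing to bound $\rho(y)$.
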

  
  This is analogous to Minsky's theorem (\cite{minsky:QP}) that the
  closest-point projection in the \Teich metric to a cobounded \Teich
  geodesic is strongly contracting. Combining \thmref{IntroFellow} and
  \thmref{IntroProjection}, we obtain:

  \begin{introthm}[Strongly contracting for projections to Lipschitz
  geodesics]\label{Thm:IntroContracting}

     Suppose $\GL$ is a Lipschitz geodesic whose endpoints have bounded
     combinatorics. Then the closest-point projection to $\GL$ is strongly
     contracting.  
  
  \end{introthm}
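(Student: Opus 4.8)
The plan is to derive the statement formally from \thmref{IntroFellow} and \thmref{IntroProjection}, via the general principle that strong contraction of nearest-point projection is inherited by any set lying at bounded Hausdorff distance from a strongly contracting one. The endpoints of $\GL$ have bounded combinatorics, so \thmref{IntroFellow} applies: $\GL$ fellow travels the \Teich geodesic $\GT$ sharing its endpoints, and both are cobounded. In particular \thmref{IntroProjection} applies to $\GT$, so the closest-point projection $\pi_{\GT}$ is strongly contracting, and $\GL$ and $\GT$ lie at bounded Hausdorff distance, say $D$, for $\dL$; since both geodesics are uniformly thick and the Lipschitz metric is coarsely symmetric on cobounded sets, this Hausdorff bound is effectively two-sided.

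First I would record two standard consequences of strong contraction of $\pi_{\GT}$, used freely below. (i) $\pi_{\GT}$ is coarsely Lipschitz: a ball $B(z,t)$ with $t$ at most the distance from $z$ to $\GT$ is disjoint from $\GT$, hence has image of diameter at most the contraction constant, while for $t$ larger than that distance the crude triangle-inequality estimate gives $\diam\pi_{\GT}(B(z,t)) \lmul t$; combining the two cases yields $\dL(\pi_{\GT}(z),\pi_{\GT}(z')) \lmul \dL(z,z') + 1$ for all $z,z'$. (ii) For each $z$ and each $t$, the set of points of $\GT$ within $t$ of realizing $\dL(z,\GT)$ has diameter bounded in terms of $t$ and the contraction constant: for an almost-nearest point $p'$ and the true nearest point $p$, compare the $\pi_{\GT}$-images of two points lying just inside the (disjoint) ball $B(z,\dL(z,\GT))$ on the geodesics $[z,p']$ and $[z,p]$. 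Both facts are routine for a symmetric metric; here one must keep the one-sided distances and the ball convention consistent with \thmref{IntroProjection}.

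Next I would show that $\pi_{\GL}$ coarsely agrees with $\sigma\circ\pi_{\GT}$, where $\sigma\from\GT\to\GL$ is any nearest-point map; note $\sigma$ is coarsely $1$-Lipschitz since $\GL$ and $\GT$ are Hausdorff-close. Fix $z$, put $q=\pi_{\GL}(z)$, and choose $p'\in\GT$ with $\dL(q,p')\le D$. Since $q$ realizes $\dL(z,\GL)$ and every point of $\GT$ lies within $D$ of $\GL$, a short three-term estimate gives $\dL(z,p')\le\dL(z,\GT)+2D$, i.e.\ $p'$ is a $2D$-almost-nearest point of $z$ on $\GT$. By (ii), $p'$ --- and hence $q$ --- lies uniformly close to $\pi_{\GT}(z)$, hence to $\sigma(\pi_{\GT}(z))$. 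Thus $\pi_{\GL}(z)\eadd\sigma(\pi_{\GT}(z))$ with a uniform additive error.

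Finally, to conclude: let $B$ be a Lipschitz ball of radius $r$ and center $z_0$, disjoint from $\GL$. Then $z_0$ is at (one-sided) distance more than $r$ from $\GL$, hence more than $r-D$ from $\GT$, so the concentric ball $B'$ of radius $r-D$ is disjoint from $\GT$ and \thmref{IntroProjection} bounds $\diam\pi_{\GT}(B')$; the points of $B$ outside $B'$ lie within $D$ of its boundary, so by (i) their $\pi_{\GT}$-images stay uniformly close to $\pi_{\GT}(z_0)$, and the degenerate range $r\le D$ is handled by (i) directly. Hence $\diam\pi_{\GT}(B)$ is uniformly bounded, and by the previous paragraph together with coarse $1$-Lipschitzness of $\sigma$, so is $\diam\pi_{\GL}(B)$ --- which is exactly the assertion. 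I expect the main obstacle to be bookkeeping rather than ideas: controlling the asymmetry of $\dL$ throughout, i.e.\ checking that fellow-traveling and coboundedness really supply two-sided estimates, and that facts (i), (ii) and the ball/projection conventions are threaded through the asymmetric triangle inequality with the correct handedness.
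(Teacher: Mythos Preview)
Your overall strategy matches the paper's: use \thmref{IntroFellow} to place $\GL$ and $\GT$ at bounded Hausdorff distance, then transfer strong contraction from $\GT$ to $\GL$. The two proofs differ in how they execute the transfer, and one of your auxiliary facts has a genuine gap.

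The paper's precise formulation (\thmref{Contracting}) assumes $\dL(B,\GL)>R$ with $R$ the fellow-traveling constant, not merely that $B$ is disjoint from $\GL$. Under that hypothesis $B$ is automatically disjoint from $\GT$, so \thmref{Projection} applies to $B$ itself and your shrinking to $B'$ and your fact (i) become unnecessary. This matters because your justification of (i) is the weak point: the ``crude triangle-inequality estimate'' for the range $t>\dL(z,\GT)$ requires bounding $\dL(\pi_\GT(z),z)$ or $\dL(z',z)$, and neither is controlled by $\dL(z,z')$ in an asymmetric metric. Fact (i) is in fact true here, but proving it needs the balance-time description of $\pi_\GT$ (\lemref{Balanced}) together with \lemref{Minsky} and \propref{ShortEquality}---precisely the curve-length machinery the paper uses---so the abstract detour saves nothing at that step. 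The asymmetry here is not just bookkeeping.

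For the core step (your (ii) and the conclusion $\pi_\GL(z)\eadd\sigma(\pi_\GT(z))$), your argument is sound once you invoke \lemref{Asymmetry} at the thick point $p'\in\GT$ to flip the relevant distance. The paper proves the equivalent statement---that $\pi_\GT(\pi_\GL(z))$ lies close to $\pi_\GT(z)$---by a direct estimate instead: take the candidate curve $\alpha\in\mu_z$, compare the upper bound $\dL(z,\pi_\GL(z))\le\dL(z,x_{t_\alpha})+CR$ (triangle through a point of $\GL$ near $x_{t_\alpha}$, via \lemref{Asymmetry}) against the lower bound coming from the ratio $\ell_w(\alpha)/\ell_z(\alpha)$ and the $\cosh$-like growth of $\ell(\alpha)$ along $\GT$, and read off a bound on $|t-t_\alpha|$. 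Your abstract route and the paper's concrete one reach the same conclusion; the paper's version has the advantage that the dependence on \lemref{Asymmetry} and on coboundedness is explicit rather than hidden inside general projection lemmas whose asymmetric versions still have to be checked.
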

  
  \thmref{IntroContracting} is a negative-curvature phenomenon. A natural
  consequence is stability of $\GL$. In other words,

  \begin{introcor}[Stability of Lipschitz geodesics]
  \label{Cor:IntroStable} 
       
     If $\GL$ is a Lipschitz geodesic whose endpoints have bounded
     combinatorics, then any quasi-geodesic with the same endpoints as
     $\GL$ fellow travels $\GL$.
  
  \end{introcor}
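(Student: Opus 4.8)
The plan is to derive \corref{IntroStable} from \thmref{IntroContracting} by the classical principle that a strongly contracting subset is stable (``Morse''), run in the asymmetric Lipschitz setting. Fix a Lipschitz geodesic $\GL$ whose endpoints $x,y$ have bounded combinatorics, and let $\gamma$ be a $(K,C)$--quasi-geodesic for $\dL$ with endpoints $x$ and $y$; after replacing $\gamma$ by a continuous $(K',C')$--quasi-geodesic with the same endpoints (standard taming) I may assume $\gamma$ is continuous. By \thmref{IntroContracting} the closest-point projection $\pi_{\GL}$ onto $\GL$ is $B$--strongly contracting, with $B$ depending only on \s and the combinatorial constant. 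I would then prove that $\gamma$ and $\GL$ lie within Hausdorff distance $D_0=D_0(K,C,B)$ of one another, which is precisely the statement that $\gamma$ fellow-travels $\GL$.

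The substantive step is the inclusion of $\gamma$ in a bounded neighborhood of $\GL$. I would take a point $p$ on $\gamma$ maximizing $\d(\param,\GL)$, set $D=\d(p,\GL)$, and let $\gamma'$ be the maximal subarc of $\gamma$ through $p$ on which $\d(\param,\GL)\ge D/2$, with endpoints $q_1,q_2$ at distance $D/2$ from $\GL$. The heart of the argument is to estimate $\diam_{\GL}\!\bigl(\pi_{\GL}(\gamma')\bigr)$ in two ways. From above: cut $\gamma'$ into consecutive pieces of $\dL$--diameter less than $D/2$; each piece lies in a Lipschitz ball disjoint from $\GL$ (every point of $\gamma'$ is at distance $\ge D/2$ from $\GL$), so \thmref{IntroContracting} bounds the $\GL$--diameter of its projection by $B$; since $\gamma$ is a quasi-geodesic the $\dL$--length of $\gamma'$ is comparable to $D$, so the number of pieces needed depends only on $K$ and $C$, giving $\diam_{\GL}(\pi_{\GL}(\gamma'))\le B'=B'(K,C,B)$. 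From below: $q_1$ and $q_2$ are the ends of a quasi-geodesic ``hairpin'' that reaches distance $D$ from $\GL$ and returns, so $\dL(q_1,q_2)\gmul D$, while $\dL\bigl(q_i,\pi_{\GL}(q_i)\bigr)\eadd D/2$; hence $\pi_{\GL}(q_1)$ and $\pi_{\GL}(q_2)$ are at distance $\gmul D$ along $\GL$. Comparing the two estimates bounds $D$ in terms of $K,C,B$ alone.

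With this in hand, the opposite inclusion --- $\GL$ in a bounded neighborhood of $\gamma$ --- is routine: $\gamma$ runs from $x$ to $y$ and, by the previous paragraph, stays uniformly close to $\GL$, so a point of $\GL$ far from $\gamma$ would force the shadow $\pi_{\GL}(\gamma)$ to skip a macroscopic arc of $\GL$, contradicting continuity of $\gamma$ together with the projection estimate. Combining the two inclusions gives the Hausdorff bound, hence fellow-traveling. The hard part, and the only genuine departure from the symmetric theory, will be bookkeeping the asymmetry of $\dL$: one must fix conventions for quasi-geodesics, neighborhoods, and the ``ball'' in the definition of strong contraction, check that $\pi_{\GL}$ is coarsely well defined and coarsely Lipschitz for $\dL$, and keep track of the order of the arguments in each triangle inequality used above --- in particular the lower bounds on $\dL\bigl(q_i,\pi_{\GL}(q_i)\bigr)$ and on $\dL(q_1,q_2)$ must be read in the correct direction. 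None of this should obstruct the argument once \thmref{IntroProjection} and \thmref{IntroContracting} are established; alternatively, one can sidestep $\GL$ entirely and argue via \thmref{IntroFellow} together with the Morse property of the cobounded \Teich geodesic $\GT$ provided by \thmref{IntroProjection}.
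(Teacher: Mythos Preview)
Your approach is the paper's: the proof of \corref{Stable} there is literally one line, referring back to the short-cut argument used to prove \thmref{Fellow}, with $\GL$ now playing the role of $\GT$ and the quasi-geodesic playing the role of $\GL$. That argument is precisely the ``strongly contracting implies Morse'' scheme you describe, and the asymmetry is handled via \lemref{Asymmetry}, which applies because points of $\GL$ are thick (\thmref{Fellow} already established $\GL$ is cobounded).

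There is, however, a slip in your execution of the upper bound. You assert that ``since $\gamma$ is a quasi-geodesic the $\dL$--length of $\gamma'$ is comparable to $D$, so the number of pieces needed depends only on $K$ and $C$.'' This comparability is the \emph{output} of the argument, not an input: a priori, cutting $\gamma'$ of length $L$ into pieces of diameter $D/2$ gives a projection of diameter $\lmul B\,L/D$, not a uniform constant, and your ``from below'' estimate $\dL(q_1,q_2)\gmul D$ then yields only $D\lmul L/D$, which bounds nothing. The correct comparison --- and what the paper actually does, fixing $r=2b$ in advance rather than $r=D/2$ --- is between $\dL(q_1,q_2)$ and the detour $q_1\to\pi_{\GL}(q_1)\to\pi_{\GL}(q_2)\to q_2$: the quasi-geodesic inequality gives $\dL(q_1,q_2)\ge L/K-C$, while the detour has length at most $r + B(L/r+1) + C\cdot r$ (the last term from \lemref{Asymmetry}). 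With $r$ chosen so that $BK/r<1$, this solves to bound $L$, hence $D$. Once you make this repair your argument and the paper's coincide.
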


  It would be interesting to know whether the converse of
  \thmref{IntroFellow} holds. In the \Teich metric, a geodesic stays in the
  thick part if and only if the endpoints have bounded combinatorics.
  However, this seems not to be the case for the Lipschitz metric. We
  investigate the behavior of a Lipschitz geodesic where the endpoints do
  not necessarily have bounded combinatorics in a subsequent paper. 

  \subsection*{Summary of the proofs} 

  We use the detour through a \Teich geodesic for two reasons. First,
  because it is already established that $\GT$ is cobounded if and only if
  the endpoints have bounded combinatorics. But also because the lengths of
  curves (both hyperbolic length and extremal length) along a cobounded
  \Teich geodesic are known to behave like a $\cosh$ function; the length
  of a curve $\alpha$ is minimal at the balanced point $\GT(t_\alpha)$ and
  grows exponentially fast in both directions. 
  
  Our proof of \thmref{IntroProjection} is to a large extent inspired by
  Minsky's proof in the \Teich setting. However, the following crucial
  length estimate used by Minsky has no analogue in our setting. Given a
  curve $\alpha$ and $x \in \T$, let $\Ext_x(\alpha)$ and $\ell_x(\alpha)$
  denote respectively the extremal length and the hyperbolic length of
  $\alpha$ in $x$. For every two curves $\alpha$ and $\beta$, Minsky showed
  that
  \begin{equation} \label{Eq:Minsky} 
     \Ext_x(\alpha) \Ext_x(\beta) \geq \I(\alpha,\beta)^2,
  \end{equation}
  where $\I(\alpha,\beta)$ is the geometric intersection number between
  $\alpha$ and $\beta$. While the \Teich distance is computed using
  extremal length ratios of curves (\eqnref{Kerck}), the Lipschitz distance
  is computed using hyperbolic length ratios (\eqnref{Thurston}). However,
  there is no analogue of \eqnref{Minsky} for hyperbolic length. For $x$ in
  the thin part of \Teich space, the product $\ell_x(\alpha) \ell_x(\beta)$
  can be arbitrarily close to zero, while $\I(\alpha,\beta)$ can be
  arbitrarily large.
   
  Our approach to the proof of \thmref{IntroProjection} is to give an
  effective description of the closest-point projection $\pi_\GT(x)$ of a
  point $x\in \T$ to a \Teich geodesic $\GT$  (the closest-point
  projection is with respect to the Lipschitz metric). Let $\mu_x$ be a
  short marking on $x$. Then $\pi_\GT(x)$ is near $\GT(t_\alpha)$, where
  $t_\alpha$ is the balanced time of a curve $\alpha \in \mu_x$ (see
  \lemref{Balanced}). This follows from the $\cosh$--like behavior of
  lengths along a \Teich geodesic and the following:
  
  \begin{introthm}[Candidate curves]\label{Thm:IntroMarking}
     
     For $x, y \in \T$, we have
     \[ 
       \dL(x,y) \eadd \max_{\alpha \in \mu_x} \log
       \frac{\ell_y(\alpha)}{\ell_x(\alpha)}, 
     \] 
     where $\dL(x,y)$ is the Lipschitz distance from $x$ to $y$ and $\eadd$
     means equal up to an additive error depending only on the topology of
     $S$.

  \end{introthm}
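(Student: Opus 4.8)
The plan is to derive the estimate from Thurston's formula for the Lipschitz metric, \eqnref{Thurston}, which identifies $e^{\dL(x,y)}$ with the supremum of the ratios $\ell_y(\gamma)/\ell_x(\gamma)$ over all simple closed curves $\gamma$. One direction is then immediate: each $\alpha\in\mu_x$ is among the curves appearing in Thurston's supremum, so $\ell_y(\alpha)/\ell_x(\alpha)\le e^{\dL(x,y)}$ for every $\alpha\in\mu_x$, whence
\[
  \dL(x,y)\;\ge\;\max_{\alpha\in\mu_x}\log\frac{\ell_y(\alpha)}{\ell_x(\alpha)}
\]
with no error at all. The substance of the theorem is the reverse inequality. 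Setting $K:=\max_{\alpha\in\mu_x}\ell_y(\alpha)/\ell_x(\alpha)$, it suffices — again by \eqnref{Thurston} — to prove that $\ell_y(\gamma)\le e^{C(S)}K\,\ell_x(\gamma)$ for \emph{every} simple closed curve $\gamma$; equivalently, to construct a homeomorphism $\phi\colon x\to y$ homotopic to the identity with $\log(\mathrm{Lip}\,\phi)\le\log K+C(S)$.

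To build such a $\phi$ I would use $\mu_x$ to pin down the geometry of $x$. Let $P\subseteq\mu_x$ be the pants curves it contains, together with the transversals $\gamma_i'\in\mu_x$ dual to $\gamma_i\in P$. Realizing $P$ by geodesics cuts $x$ and $y$ into matching unions of pairs of pants $Q_j\subset x$ and $Q_j'\subset y$. Since $\mu_x$ is short, each $\ell_x(\gamma_i)$ is at most the Bers constant, so each $Q_j$ outside its thin collars has uniformly bounded diameter (and there are only finitely many topological types), while the twisting of $x$ along $\gamma_i$ is recorded by $\ell_x(\gamma_i')$. I would then define $\phi$ piece by piece: on each geodesic $\gamma_i$, let $\phi$ have constant speed $\ell_y(\gamma_i)/\ell_x(\gamma_i)\le K$, with the rotation chosen so that the parametrizations of $\gamma_i'$ induced from the two adjacent pants differ exactly by the twisting offset between $x$ and $y$ along $\gamma_i$; then extend over each $Q_j\to Q_j'$. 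The Lipschitz constant of the extension is bounded by a uniform multiple of $K$ together with a contribution from the boundary twisting and from the collars, and the key point is that each such contribution is itself absorbed by $K$: the twisting offset along $\gamma_i$ (in the appropriate hyperbolic units) and the collar-crossing distortion when $\gamma_i$ is short in $x$ but not in $y$ are both controlled by the ratios $\ell_y(\gamma_i')/\ell_x(\gamma_i')$ and $\ell_y(\gamma_i)/\ell_x(\gamma_i)$. As the number of pieces is bounded in terms of $\chi(S)$ and each has bounded complexity, these errors accumulate only additively, yielding the constant $C(S)$.

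The main obstacle is the analysis of the thin parts. When $\ell_x(\gamma_i)$ is very small its collar is very wide, and one must exhibit an explicit Lipschitz map from the $x$-collar of $\gamma_i$ onto the corresponding region of $y$ — be it a collar or part of a thick piece — with Lipschitz constant at most $e^{C(S)}K$ even when the twisting parameter changes a great deal; this needs the exact hyperbolic trigonometry of (half-)collars and careful bookkeeping of how $\ell_y(\gamma_i')$ simultaneously encodes the twisting and the collar width in $y$. A secondary technical point is verifying that the pieces glue to a genuine homeomorphism homotopic to the identity, i.e.\ that the parametrizations of the $\gamma_i$ match on the nose rather than merely coarsely. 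One could instead try to bound $\ell_y(\gamma)/\ell_x(\gamma)$ curve by curve using the standard combinatorial length formulas expressing $\ell_z(\gamma)$ through intersection numbers with $\mu_x$, lengths of short curves, and twisting numbers; but that route still requires controlling the thin part of $y$, which $\mu_x$ does not see directly, so I expect the explicit-map construction to be the more transparent one.
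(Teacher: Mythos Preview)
Your plan takes a genuinely different route from the paper. You propose to build an explicit Lipschitz homeomorphism $\phi\colon x\to y$ pants-piece by pants-piece, using the decomposition coming from $\mu_x$, and to bound its Lipschitz constant by $e^{C(S)}K$. The paper instead works curve by curve, exactly along the alternative you mention and dismiss at the end. It proves two combinatorial length estimates and then the theorem falls out in a few lines: \propref{ShortEquality} gives
\[
  \ell_x(\gamma)\;\emul\;\sum_{\alpha\in\mu_x}\I(\gamma,\alpha)\,\ell_x(\balpha)
\]
because $\mu_x$ is short on $x$, while \propref{AnyUpper} gives
\[
  \ell_y(\gamma)\;\lmul\;\sum_{\alpha\in\mu_x}\I(\gamma,\alpha)\,\ell_y(\balpha)
\]
for an \emph{arbitrary} marking, hence in particular for $\mu_x$ viewed on $y$. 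Dividing, the ratio of the two sums is at most the maximum ratio of corresponding summands, which is exactly $K$.

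You rejected this route because ``that route still requires controlling the thin part of $y$, which $\mu_x$ does not see directly.'' That is the misconception. The content of \propref{AnyUpper} is precisely that the linear expression $\sum_{\alpha\in\mu}\I(\gamma,\alpha)\,\ell_y(\balpha)$ is an \emph{upper} bound for $\ell_y(\gamma)$ for \emph{any} marking $\mu$, whether or not $\mu$ is adapted to the thin parts of $y$; its proof is a purely topological surgery on $\gamma$ relative to $\mu$ and never looks at the geometry of $y$ beyond the lengths $\ell_y(\balpha)$. One loses only the matching lower bound on $\ell_y(\gamma)$, and only the upper bound is needed here.

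Your explicit-map construction is plausible in outline, and the heuristic that the collar width and the twist along $\gamma_i$ in $y$ are both encoded in $\ell_y(\gamma_i')$ is correct. But the step you flag as ``the main obstacle'' --- producing a map from the $x$-collar of $\gamma_i$ to the corresponding region of $y$, possibly of very different width and twist, with Lipschitz constant $\lmul K$, and then gluing everything coherently --- is genuine work that you have not carried out, and it is at least as delicate as the two propositions above. The paper's route bypasses this entirely.
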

  
  A special case of \thmref{IntroMarking} where $x$ and $y$ are assumed to
  be in the thick part of \T was done in \cite{rafi:TL}. Thurston's
  formula (\eqnref{Thurston}) for the Lipschitz distance implies that there
  is some curve $\alpha$ such that $\log \frac{\ell_y (\alpha)}{\ell_x
  (\alpha)}$ is a good estimate for $\dL(x,y)$. \thmref{IntroMarking}
  implies that, to find such an $\alpha$, one only needs to examine the
  finitely many curves that appear in $\mu_x$. We will call such a curve
  $\alpha$ in $\mu_x$ a \emph{candidate curve} from $x$ to $y$.
  
  The proof of \thmref{IntroMarking} requires some way of estimating the
  hyperbolic length of a curve in terms of a marking on $S$. We derive two
  formulas for this purpose and their proofs take up a large part of the
  paper. The first formula allows us to estimate, up to a multiplicative
  error, the length of any curve $\gamma$ by a linear sum of the lengths of
  the curves in a short marking, with coefficients coming from the
  intersection of $\gamma$ with the curves in the marking
  (\propref{ShortEquality}). The proof relies on the geometry of the
  thick-thin decomposition of a hyperbolic surface. The second formula uses
  a topological argument to show that, if the short marking is replaced by
  an arbitrary marking, then the same formula still provides an upper bound
  for the length of the curve (\propref{AnyUpper}). Using these two
  propositions, we prove \thmref{IntroMarking} and \thmref{IntroProjection}
  in \secref{Fellow}. These propositions also have analogues in extremal
  length, which we use to sketch an alternate proof of Minsky's theorem at
  the end of \secref{Fellow}. We end the paper with a proof of
  \thmref{IntroFellow} and \thmref{IntroContracting} in \secref{Stable}.
   
  \subsection*{Analogues with Weil-Petersson geodesics} 
  
  As we have mentioned before, a \Teich geodesic is cobounded if and only
  if its endpoints have bounded combinatorics. In \cite{minsky:AWII},
  Brock, Masur and Minsky showed a similar result for bi-infinite geodesics
  in the Weil-Petersson metric on \Teich space. As in our paper, the main
  tool is to show that some projection map is contracting. In their case,
  what they need (and what they show) is that the projection in the pants
  decomposition complex to any hierarchy path satisfying the
  \emph{non-annular bounded combinatorics} property is \emph{coarsely
  contracting} (\cite[Theorem 4.1]{minsky:AWII}).  

  \subsection*{Analogues with Outer space} 
  
  Let $\calX_n$ be the Outer Space, the space of marked metric graphs of
  rank $n$ modulo homothety. The space $\calX_n$ is naturally equipped with
  the Lipschitz metric, on which $\Out(\FF_n) = \Aut(\FF_n) / \Inn(\FF_n)$
  acts as isometries.

  In \cite{algom:SC}, Algom-Kfir established a version of
  \thmref{IntroContracting} for a family of geodesics in $\calX_n$. It was
  shown that the closest-point projection to axes of fully irreducible
  elements of $\Out(\FF_n)$ is strongly contracting. This result gives
  another parallel between fully irreducible elements of $\Out(\FF_n)$ and
  pseudo-Anosov elements of the mapping class group of $S$. A
  generalization of this result for a larger class of paths (lines of
  minima) appears in \cite{ham:LM}. 

  An analogue of \thmref{IntroMarking} exists for $\calX_n$. By a result of
  White, to compute the Lipschitz distance from one graph to another, it
  suffices to consider the length ratios of a finite collection of loops.
  (See \cite[Proposition 2.3]{algom:SC} for a proof of this fact.) 
  
  \subsection*{Acknowledgments}

  We thank the referee for many helpful comments.

\section{Preliminaries} 
 
  \label{Sec:Background}

  \subsection*{\Teich and Lipschitz metrics}
  
  Let \s be a connected, oriented surface of finite type with $\chi(\s)<0$.
  The \Teich space \T of \s is the space of marked conformal structures on \s
  up to isotopy. Via uniformization, \T is also the space of marked
  (finite-area) hyperbolic metrics on \s up to isotopy. 
  
  In this paper, we consider two metrics on \T, the \Teich metric and
  Lipschitz metric. Given  $x, y \in \T$, the \emph{\Teich distance}
  between them is defined to be \[ \dT(x,y) = \frac{1}{2} \inf_f \log
  K(f),\] where $f \from x \to y$ is a $K(f)$--quasi-conformal map
  preserving the marking. (See \cite{gardiner:QT} and \cite{hubbard:TT} for
  background information.) Introduced by Thurston in \cite{thurston:MSM},
  the \emph{Lipschitz distance from $x$ to $y$} is defined to be \[
  \dL(x,y) = \inf_f \log L(f), \] where $f \from x \to y$ is a
  $L(f)$--Lipschitz map preserving the marking. Unlike the \Teich metric,
  the Lipschitz metric is not symmetric, so the order of the two points
  matters when computing distance. 
  
  Both metrics can be described in terms of certain length ratios of
  curves. By a \emph{curve} on $S$, we will always mean a free isotopy
  class of an essential simple closed curve. Essential means the curve is
  not homotopic to a point or a puncture of \s. Given a curve $\alpha$ on
  \s, the extremal length of $\alpha$ in $x \in \T$ is \[\Ext_x(\alpha) =
  \sup_{\rho} \frac{\ell_\rho(\alpha)^2}{\Area(\rho)}, \] where $\rho$ is
  any metric in the conformal class of $x$, $\ell_\rho(\alpha)$ is the
  $\rho$--length of the shortest curve in the homotopy class of $\alpha$,
  and $\Area(\rho)$ is the area of $x$ equipped with the metric $\rho$. For
  the \Teich metric, Kerckhoff showed: 
  \begin{equation} \label{Eq:Kerck} 
     \dT(x,y) = \frac{1}{2} \log\underset{\alpha}\sup
     \frac{\Ext_y(\alpha)}{\Ext_x(\alpha)}, 
  \end{equation} 
  where the $\sup$ is taken over all curves on \s \cite{kerckhoff:AG}.
  For the Lipschitz metric, Thurston showed:
  \begin{equation} \label{Eq:Thurston} 
     \dL(x,y) = \log\underset{\alpha}\sup
     \frac{\ell_y(\alpha)}{\ell_x(\alpha)},
  \end{equation} 
  where $\ell_x(\alpha)$ is the hyperbolic length of $\alpha$ in the unique
  hyperbolic metric in the conformal class of $x$ \cite{thurston:MSM}. 
  
  A point $x \in \T$ is called \emph{$\ep$--thick} (or \emph{$\ep$--thin})
  if the length of the shortest curve on $x$ is greater or equal to $\ep$
  (or less than $\ep$). In the thick part of \T, it is known that the two
  metrics are the same up to an additive error.

  \begin{theorem}\cite{rafi:TL} \label{Thm:TL}
     For every $\ep$ there exists a constant $c$ such that whenever $x, y
     \in \T$ are $\ep$--thick,
     \[ \big| \dT(x,y) - \dL(x,y) \big| \le c.
     \]

  \end{theorem}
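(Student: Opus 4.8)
The plan is to estimate each of the two distances by the \emph{same} family of length ratios and to absorb the discrepancy between the two metrics into a single comparison between extremal and hyperbolic length valid in the thick part. Concretely, the heart of the matter is the estimate
\[
  \Ext_x(\alpha)\ \emul\ \ell_x(\alpha)^2,
\]
valid for every curve $\alpha$ and every $\ep$--thick $x\in\T$, with multiplicative constant depending only on $\ep$ and the topology of $S$. Granting this, the theorem follows at once: since $x$ and $y$ are both $\ep$--thick, applying the estimate at both points gives, for every curve $\alpha$,
\[
  \frac{\ell_y(\alpha)}{\ell_x(\alpha)}\ \emul\ \Big(\frac{\Ext_y(\alpha)}{\Ext_x(\alpha)}\Big)^{1/2},
\]
with a uniform constant. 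A pointwise comparison of this kind passes to the supremum over all curves, so $e^{\dL(x,y)}\emul e^{\dT(x,y)}$ by \eqnref{Thurston} and \eqnref{Kerck}, i.e.\ $\lvert\dT(x,y)-\dL(x,y)\rvert\le c$ with $c=c(\ep,S)$. It is harmless that the suprema in \eqnref{Thurston} and \eqnref{Kerck} may be realized at different curves, since the comparison holds curve by curve; and one never needs to pass to all of $\ML$, as both formulas are already realized on simple closed curves.

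One inequality in the displayed estimate is free of any thickness hypothesis: the hyperbolic metric on $x$ is an admissible competitor in the definition of $\Ext_x(\alpha)$, and it has area $2\pi\lvert\chi(S)\rvert$, so $\Ext_x(\alpha)\ge \ell_x(\alpha)^2/(2\pi\lvert\chi(S)\rvert)$. The real content is the reverse inequality $\Ext_x(\alpha)\lmul \ell_x(\alpha)^2$, and this is exactly where $\ep$--thickness must be used: for a long curve the general upper bound for $\Ext_x(\alpha)$ in terms of $\ell_x(\alpha)$ is only exponential, so some genuine input about the geometry of $x$ is needed.

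To prove $\Ext_x(\alpha)\lmul \ell_x(\alpha)^2$, I would fix a Bers marking $\mu_x$ of $x$ (a pants decomposition together with transversals). Because $x$ is $\ep$--thick, $\mu_x$ can be taken to consist of at most $N(S)$ curves whose $x$--lengths all lie in a fixed interval $[\ep,D]$ with $D=D(\ep,S)$; set $i(\alpha,\mu_x)=\sum_{\beta\in\mu_x}\I(\alpha,\beta)$. Then (i) $\ell_x(\alpha)\gmul i(\alpha,\mu_x)$, because by the collar lemma each $\beta\in\mu_x$ carries an embedded collar of width bounded below in terms of $\ep$ and $D$, and the geodesic representative of $\alpha$ must cross this collar about $\I(\alpha,\beta)$ times, contributing definite length each time; taking the $\beta$ that $\alpha$ meets most often and using that $\lvert\mu_x\rvert\le N(S)$ gives the bound. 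And (ii) $\Ext_x(\alpha)\lmul i(\alpha,\mu_x)^2$, which is the thick case of the extremal-length estimates of \cite{minsky:PR}: $\alpha$ is carried by a train track dual to $\mu_x$ whose branches have definite width in $x$, with total weight a bounded multiple of $i(\alpha,\mu_x)$ (alternatively, one can use subadditivity of $\sqrt{\Ext_x(\param)}$ together with $\Ext_x(\beta)\emul 1$ for $\beta\in\mu_x$). Combining (i) and (ii) yields $\Ext_x(\alpha)\lmul i(\alpha,\mu_x)^2\lmul\ell_x(\alpha)^2$, which completes the estimate.

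The main obstacle is step (ii): the quadratic upper bound for extremal length in the thick part, which is what distinguishes this comparison from the crude exponential bound available in general. Everything else — the free lower bound, the collar estimate in (i), and the passage to the supremum — is routine.
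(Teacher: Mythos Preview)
The paper does not prove this theorem; it is quoted from \cite{rafi:TL} without argument, so there is no ``paper's own proof'' to compare against. Your proposal is a correct and standard derivation: reduce to the curve-by-curve comparison $\Ext_x(\alpha)\emul\ell_x(\alpha)^2$ in the thick part, then pass to the supremum via \eqnref{Thurston} and \eqnref{Kerck}. In fact the paper later records precisely this comparison as \lemref{ExtHypThick} (citing \cite{minsky:PR}), so your key estimate is already available in the paper's toolkit; given that lemma, the deduction is a two-line computation, exactly as you wrote it.

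Your justification of the hard direction $\Ext_x(\alpha)\lmul\ell_x(\alpha)^2$ via the marking is also sound: step (i) is the collar argument you describe, and step (ii) is the thick case of Minsky's extremal-length estimate (equivalently, the second formula of \propref{ShortEquality} with $\calA=\emptyset$). One caveat: the parenthetical alternative ``subadditivity of $\sqrt{\Ext_x(\param)}$'' is not a standard fact and does not obviously yield (ii) even if granted, since $\alpha$ is not literally a sum of marking curves in $\ML$. Drop that aside and rely on the Minsky/train-track argument you already cite; the rest stands.
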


  \subsection*{Curve graphs and subsurface projection}
  
  Given two curves $\alpha$ and $\beta$ on \s, we define their
  \emph{intersection number} $\I(\alpha,\beta)$ to be the minimal number of
  intersections between any representatives of homotopy classes of $\alpha$
  and $\beta$. 
  
  The \emph{curve graph} \cc of \s is defined as follows: the vertices are
  curves on \s and the edges are pairs of distinct curves that have minimal
  possible intersections. This minimum is $1$ for the once-punctured torus,
  $2$ for the four-holed sphere, and $0$ for all other surfaces. Note that
  a pair of pants (three-holed sphere) does not have any essential curves.
  We equip \cc with a metric by assigning length one to every edge. 
  
  We use a different definition for the curve graph $\calC(A)$ of an
  annulus $A$ (sphere with two boundary components). By an \emph{arc} on
  $A$ we always mean a homotopy class of a simple arc $\omega$ connecting
  the two boundary components of $A$ where the homotopy is taken relative
  to the endpoints of $\omega$. The intersection $\I(\omega,\omega')$ of
  two arcs is the minimal number of intersections between any
  representatives of homotopy classes of $\omega$ and $\omega'$. The
  vertices of $\calC(A)$ are arcs on $A$ and the edges are pairs of arcs
  with zero intersection. We also equip $\calC(A)$ with a metric as above. 

  From \cite{minsky:CCII}, we recall the definition of \emph{subsurface
  projection} \[ \pi_Y : \cc \to \calP \big( \calC(Y) \big). \] First
  suppose $Y$ is not an annulus. Let $\alpha \in \cc$. If $\alpha$ is
  disjoint from $Y$, then $\pi_Y(\alpha) = \emptyset$ and if $\alpha$ is
  contained in $Y$, then $\pi_Y(\alpha)=\alpha$. In all other cases, the
  restriction of $\alpha$ to $Y$ is a collection of arcs. Let $\omega$ be one such arc. 
  The endpoints of $\omega$ lie on two (not necessarily distinct) boundary
  components $\beta$ and $\beta'$ of $Y$. Let $\calN_\omega$ be a regular
  neighborhood in $Y$ of $\omega \cup \beta \cup \beta'$. Then 
  $\calN_\omega$ always has a boundary component that is a non-trivial 
  curve in $Y$. Let $\pi_Y(\alpha)$ be the union of all essential boundary
  curves of  $\calN_\omega$, 
  where $\omega$ ranges over all arcs in the restriction of $\alpha$ with
  $Y$. The set $\pi_Y(\alpha)$ is non-empty with diameter at most two in
  \cc \cite{minsky:CCII}.   
  
   Given an annular subsurface $A$ of \s with
  \emph{core curve} $\gamma$, the Gromov compactification
  of the annular cover of \s corresponding to $\gamma \in \pi_1(\s)$ is well-defined and 
  is independent of the choice of the hyperbolic metric on \s.  For any $\alpha \in \cc$, the
  projection $\pi_A(\alpha)$ is defined to be the set of lifts of $\alpha$
  to $A$ that are essential arcs. Note that a lift has well-defined endpoints in
  the Gromov boundary of $A$. The set $\pi_A(\alpha)$ has diameter at most
  two in $\calC(A)$.

  \subsection*{Short markings and bounded combinatorics}

  A pants curve system on $S$ is a collection of mutually disjoint curves
  which cut $S$ into pairs of pants. A \emph{marking} $\mu$ on $S$ is a
  pants curve system $\calP$ with additionally a set of \emph{transverse}
  curves $\calQ$ satisfying the following properties. We require each curve
  $\alpha \in \calP$ to have a unique transverse curve $\beta \in \calQ$
  that intersects $\alpha$ minimally (once or twice) and is disjoint from
  all other curves in $\calP$. We will often say $\alpha$ and $\beta$ are
  \emph{dual} to each other, and write $\balpha = \beta$ or $\bbeta =
  \alpha$. This notion of a marking was introduced first by Masur and
  Minsky \cite{minsky:CCII}; however their terminology is clean marking.

  Given $x \in \T$, a \emph{short marking} $\mu_x$ on $x$ is a marking
  where the pants curve system is constructed using the algorithm that
  picks the shortest curve on $x$, then the second shortest disjoint from
  the first, and so on. Once the pants curve system is complete, the
  transverse curves are then chosen to be as short as possible. Note that a
  short marking on $x$ may not be unique, but all short markings on $x$
  form a bounded set in \cc. Thus, we will refer to $\mu_x$ as \emph{the
  associated short marking} on $x$.
  
  Let $x, y \in \T$ and $\mu_x$ and $\mu_y$ be the associated short
  markings. For any $Y \subseteq S$, define 
  \[ d_Y(\mu_x, \mu_y) = \diam_{\calC(Y)} \big( \pi_Y(\mu_x), \pi_Y(\mu_y)
  \big), \] where $\pi_Y(\mu_x)$ is the union of the projection of the
  curves of $\mu_x$ to $Y$.
  
  \begin{definition} \label{Def:BC}
    
    Two points $x, y \in \T$ are said to have $K$--\emph{bounded
    combinatorics} if there exists a constant $K > 0 $ such that for every
    proper subsurface $Y \subset S$, 
  \[ 
    d_Y(\mu_x, \mu_y) \le  K. 
  \] 
  
  \end{definition}

  \subsection*{Cobounded geodesics}

  Given $x,y \in \T$, we denote by $\GT(x,y)$, or $\GT$ when endpoints are
  not emphasized, the \Teich geodesic connecting $x$ and $y$. We denote by
  $\GL(x,y)$ (or $\GL$) a Lipschitz geodesic \emph{from $x$ to $y$}. In
  either the \Teich or the Lipschitz metric, a geodesic is
  \emph{$\ep$--cobounded} if every point on the geodesic is $\ep$--thick.
  Given $x$ and $y$, there is a unique \Teich geodesic connecting them. On
  the other hand, Thurston proved the existence of a Lipschitz geodesic
  from $x$ to $y$ \cite{thurston:MSM}, but in general there may be more
  than one. 

  The following theorem is due to Rafi. The second direction also follows
  from the work of Minsky (see \cite{minsky:TG} and \cite{minsky:ELCI}).  

  \begin{theorem}[\cite{rafi:SC}] \label{Thm:Teich}
   
     For every $\ep, K > 0$, there exists a constant $\ep' > 0$ such that
     the following holds. If $x, y \in \T$ are $\ep$--thick and have
     $K$--bounded combinatorics, then the \Teich geodesic $\GT$ with
     endpoints $x$ and $y$ is $\ep'$--cobounded.
    
     Conversely, for every $\ep$ there is $K'$ such that if $\GT$ is
     $\ep$--cobounded (possibly an infinite or bi-infinite ray), then any
     two points on $\GT$ have $K'$--bounded combinatorics.

  \end{theorem}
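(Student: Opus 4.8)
The plan is to deduce both directions from Rafi's characterization of which curves become short along a \Teich geodesic. I would use it in the following form, which follows from the main results of \cite{rafi:SC} (see also the combinatorial distance formula of \cite{rafi:CM}): given $\ep>0$ there are a constant $\ep_0>0$ and monotone functions $F,G$, all depending only on $\ep$ and the topology of $S$, with $F(\delta)\to\infty$ as $\delta\to0$ and $G(m)\to0$ as $m\to\infty$, such that for any two $\ep$--thick points $x,y\in\T$ and the \Teich segment $\GT=\GT(x,y)$ the following hold. (i) If $\ell_{\GT(t)}(\alpha)\le\delta\le\ep_0$ for some curve $\alpha$ and some time $t$, then $d_Z(\mu_x,\mu_y)\ge F(\delta)$ for some \emph{proper} subsurface $Z$ that is either the annulus with core $\alpha$ or a non-annular subsurface with $\alpha\subseteq\partial Z$. (ii) Conversely, if $d_Z(\mu_x,\mu_y)\ge m$ for such a proper $Z$, then $\ell_{\GT(t)}(\alpha)\le G(m)$ for some $t$, where $\alpha$ is the core of $Z$ (resp.\ a component of $\partial Z$). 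The thickness of $x$ and $y$ is exactly what makes the endpoint short markings $\mu_x,\mu_y$ well defined and removes the boundary effects that would otherwise obstruct such a clean dictionary.

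For the first (forward) implication I would argue by contrapositive and then quantify. Fix $\ep,K>0$ and suppose $\GT=\GT(x,y)$ is \emph{not} $\ep'$--cobounded, where $\ep'\le\ep$ will be chosen below. Since $x$ and $y$ are $\ep$--thick, the failure necessarily occurs at an interior time, so there is a curve $\alpha$ and a time $t$ with $\ell_{\GT(t)}(\alpha)<\ep'$. If $\ep'\le\ep_0$, statement (i) yields a proper subsurface $Z$ with $d_Z(\mu_x,\mu_y)\ge F(\ep')$. Now choose $\ep'=\ep'(\ep,K)>0$ small enough that $\ep'\le\min(\ep,\ep_0)$ and $F(\ep')>K$; then $d_Z(\mu_x,\mu_y)>K$ for a proper subsurface $Z$, contradicting the $K$--bounded combinatorics of $x$ and $y$. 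Hence $\GT$ is $\ep'$--cobounded.

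For the converse I would run the same argument in reverse, being careful about uniformity over pairs of points. Assume $\GT$ is $\ep$--cobounded --- a segment, ray, or bi-infinite line --- and let $p,q$ be \emph{any} two points on it. Every point of $\GT$ is $\ep$--thick, so $p$ and $q$ are $\ep$--thick, $\mu_p,\mu_q$ are genuine short markings, and the subsegment $\GT(p,q)$ is itself $\ep$--cobounded. If $d_Y(\mu_p,\mu_q)>K'$ for some proper subsurface $Y$, then statement (ii) applied to $\GT(p,q)$ produces a time $t$ on $\GT(p,q)$, hence on $\GT$, with $\ell_{\GT(t)}(\alpha)\le G(K')$ for $\alpha$ the core of $Y$ or a component of $\partial Y$. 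Choosing $K'=K'(\ep)>0$ large enough that $G(K')<\ep$, the point $\GT(t)$ is not $\ep$--thick, contradicting $\ep$--coboundedness. Since $K'$ depends only on $\ep$ and the topology of $S$ and the argument was uniform in $p$ and $q$, any two points of $\GT$ have $K'$--bounded combinatorics.

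The main obstacle is entirely concentrated in the input from \cite{rafi:SC}: determining exactly when, and how short, a curve becomes along a \Teich geodesic. The two reductions above are bookkeeping; the real work is the flat-geometric analysis of the quadratic differential along $\GT$ --- decomposing the flat surface at each time into thick pieces and (expanding and flat) annuli, invoking Minsky's product-regions theorem \cite{minsky:PR} to control the geodesic inside a thin part, and matching the time spent with $\alpha$ short, i.e.\ the twisting accumulated around $\alpha$ together with the progress made in subsurfaces having $\alpha$ in their boundary, against the subsurface coefficients $d_Z(\mu_x,\mu_y)$. The converse direction is also available through Minsky's a priori bounds and ending-lamination techniques \cite{minsky:TG,minsky:ELCI}, which accounts for the second attribution in the statement.
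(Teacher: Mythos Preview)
The paper does not give a proof of this theorem; it is quoted as a known result from \cite{rafi:SC} (with the converse also attributed to Minsky \cite{minsky:TG,minsky:ELCI}) and used as a black box for the rest of the paper. Your proof sketch is a reasonable outline of how the statement follows from the short-curve characterization in \cite{rafi:SC}, and the contrapositive packaging in both directions is correct, so there is nothing to compare against here.
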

  
  For the rest of this paper, we will fix $\ep>0$ to be less than the
  Margulis constant. Unless otherwise specified, by thick or thin, we will
  always mean $\ep$--thick or $\ep$--thin. We will also fix a constant $K$
  so that bounded combinatorics will mean $K$--bounded combinatorics. A
  cobounded geodesic will always mean $\ep'$--cobounded with $\ep'$ as in
  \thmref{Teich}. Once $\ep$ and $K$ are fixed, the dependence of other
  constants on $\ep$ and $K$ can be ignored; we can treat constants which
  depend on $\ep$ and $K$ as if they depended only on the topology of $S$.
  
  In this paper, we will try to understand a Lipschitz geodesic $\GL$ whose
  endpoints have bounded combinatorics. Our main tool will be to compare
  the geometry of $\GL$ with the geometry of the unique \Teich geodesic
  $\GT$ connecting the same endpoints. We will use the fact that $\GT$ is
  cobounded to show that the closest-point projection to $\GT$ in the
  Lipschitz metric is contracting (\thmref{Projection}). This will imply
  that $\GL$ and $\GT$ fellow travel, and hence $\GL$ is also cobounded
  (for some $\ep''$ depending only on \s) (\thmref{Fellow}). 

  \subsection*{Thick-thin decomposition of a hyperbolic surface}

  Fix $0 < \ep_1 < \ep_0 < \ep$. For any $x \in \T$, we recall the
  notion of $(\ep_0, \ep_1)$ \emph{thick-thin decomposition} of $x$ (see
  \cite{minsky:PR}). Let $\calA$ be the (possibly empty) set of curves in
  $x$ whose hyperbolic lengths are less than $\ep_1$. For each $\alpha \in
  \calA$, let $A_{\alpha}$ be the regular neighborhood of the $x$--geodesic
  representative of $\alpha$ with boundary length $\ep_0$. Note that, since
  $\epsilon_0$ is less than the Margulis constant, the annuli are disjoint.
  Let $\calY$ be the set of components of $x \setminus (\bigcup_{\alpha \in
  \calA} A_{\alpha}).$ We denote this decomposition of $x$ by $(\calA,
  \calY)$.
  
  Note that if $(\calA,\calY)$ is a thick-thin decomposition for $x$ and
  $\mu_x$ is a short marking, then $\calA$ always forms a subset of the
  pants curve system in $\mu_x$. 
  
  \subsection*{Notations}

  Throughout this paper we will adopt the following notations. Below, $\ga$
  and $\gb$ represent various quantities such as distances between two
  points or lengths of a curve, and $C$ and $D$ are constants that depend
  only on the topology of $S$. 
  
  \begin{enumerate}
    
     \item $\ga \lmul \gb \quad$ if $\quad \ga \le C \gb $,

     \item $\ga \ladd \gb \quad$ if $\quad \ga \le \gb + D$,
     
     \item $\ga \emul \gb \quad$ if $\quad \ga \lmul \gb$ and $\gb \lmul \ga$.
     
     \item $\ga \eadd \gb \quad$ if $\quad \ga \ladd \gb$ and $\gb \ladd \ga$.

  \end{enumerate}
  We will also often use the notation $\ga = O(1)$ to mean $\ga \lmul 1$.

\section{Hyperbolic length estimates via markings}

  \label{Sec:Marking}

  In this section we give some estimates of the hyperbolic length of a
  simple closed curve in terms of the number of times the curve intersects
  a marking on a surface and the length of the marking itself. Up to a
  multiplicative error, our expression provides an accurate estimate when
  the marking is short, but yields only an upper bound for a general
  marking. 
  
  \subsection*{Short Marking} \label{Sec:ShortMarking}
      
  \begin{proposition}\label{Prop:ShortEquality} 
      
     Let $x \in \T$ and $\mu_x$ be a short marking on $x$. Then for every
     curve $\gamma$, 
     \[ 
       \ell_x(\gamma) 
       \emul 
       \sum_{\alpha \in \mu_x} \I(\gamma, \alpha) \, \ell_x(\balpha),
     \]
     and
     \[
       \Ext_x(\gamma) 
       \emul 
       \sum_{\alpha \in \mu_x} \I(\gamma, \alpha)^2 \, \Ext_x(\balpha).  
     \] 
  
  \end{proposition}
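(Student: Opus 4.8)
The plan is to work inside the $(\ep_0,\ep_1)$ thick-thin decomposition $(\calA,\calY)$ of $x$ and estimate $\ell_x(\gamma)$ piece by piece. Recall that $\calA \subseteq \calP$, the pants curve system of $\mu_x$. Put a geodesic representative on $\gamma$ and consider how it is cut into arcs by the thin annuli $A_\alpha$, $\alpha\in\calA$: we get sub-arcs lying in the thin parts and sub-arcs lying in the thick components $Y\in\calY$. The contribution from the thin parts is controlled by the Collar Lemma: an arc of $\gamma$ crossing $A_\alpha$ has length comparable to $\log(1/\ell_x(\alpha)) \emul \ell_x(\balpha)$ (since $\balpha$ must cross the collar of $\alpha$, so $\ell_x(\balpha) \emul \log(1/\ell_x(\alpha))$), and the number of such crossings is $\I(\gamma,\alpha)$ up to a bounded factor. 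So the thin-part contribution is $\emul \sum_{\alpha\in\calA}\I(\gamma,\alpha)\,\ell_x(\balpha)$, matching the relevant terms of the sum.

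For the thick parts, fix $Y\in\calY$. Since $Y$ is $\ep_0$--thick with bounded topology, it has bounded diameter and bounded injectivity radius from below, so the restriction of $\mu_x$ to $Y$ is a bounded-length collection of curves and arcs whose complementary pieces in $Y$ have bounded diameter. Each sub-arc of $\gamma$ in $Y$ connects two boundary curves of $Y$ (these are either curves of $\mu_x$ or boundary curves $\partial A_\alpha$), and its length is $O(1)$ times its combinatorial length, i.e.\ the number of times it crosses the $\mu_x$--pieces in $Y$, which in turn is comparable to the total intersection with the $\mu_x$--curves it meets. Summing over all arcs in $Y$ and all $Y$, the thick-part contribution is $\emul \sum_{\alpha}\I(\gamma,\alpha)\ell_x(\alpha)$ where $\alpha$ ranges over non-thin curves; note $\ell_x(\balpha)\emul \ell_x(\alpha)\emul 1$ for such $\alpha$ by $\ep_0$--thickness and boundedness, so this is absorbed into $\sum_{\alpha\in\mu_x}\I(\gamma,\alpha)\ell_x(\balpha)$. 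Combining the two contributions and noting that intersections with transverse curves $\balpha$ contribute at most a bounded multiple of those already counted (each $\balpha$ has bounded length and hence bounded intersection with the thick pieces, and crosses only its own thin collar $O(1)$ times), gives the hyperbolic length estimate; the lower bound is immediate since each summand is realized along $\gamma$, and the upper bound follows from the piecewise decomposition.

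The main obstacle is the bookkeeping in the thick parts: one must show that the length of an arc of $\gamma$ inside $Y$ is genuinely comparable to its intersection number with the marking curves meeting $Y$, without losing a factor depending on how $\gamma$ wraps. The point is that $Y$ is thick with bounded geometry, so any essential arc in $Y$ with endpoints on $\partial Y$ is, up to bounded homotopy, a concatenation of $O(1)$ many ``elementary'' arcs each of bounded length, and the number of elementary arcs is comparable to the arc's intersection with the fixed bounded-length system $\mu_x|_Y$; this is where the short-marking hypothesis (or at least the bounded length of $\mu_x$ on thick pieces) is essential. For the extremal length statement, I would run the same argument using that on a thick surface extremal and hyperbolic length are comparable, and that extremal length of a concatenation of $n$ bounded arcs crossing fixed collars grows quadratically in $n$ (the width-vs-height trade-off in the annuli), which is the classical source of the square in $\I(\gamma,\alpha)^2$; alternatively one can cite the analogue of the Minsky product formula in the thin annuli together with the thick-part comparison. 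The additive/multiplicative constants depend only on $\chi(S)$ and on the fixed $\ep,\ep_0,\ep_1$, which we have agreed to treat as topological.
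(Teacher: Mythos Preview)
Your thick-thin decomposition strategy matches the paper's, but there is a real gap in the thin-part analysis. You assert that an arc of $\gamma$ crossing the collar $A_\alpha$ has length comparable to $\log(1/\ell_x(\alpha))$. That is only a lower bound: an arc crossing $A_\alpha$ can twist an arbitrary number of times around $\alpha$ before exiting, and each twist adds roughly $\ell_x(\alpha)$ to its length. So the correct contribution of the thin part is not $\sum_{\alpha\in\calA}\I(\gamma,\alpha)\,\ell_x(\balpha)$ but rather
\[
  \sum_{\alpha\in\calA}\I(\gamma,\alpha)\Big[\log\tfrac{1}{\ell_x(\alpha)}+\ell_x(\alpha)\,\twist_\alpha(x,\gamma)\Big],
\]
where $\twist_\alpha(x,\gamma)$ is the relative twisting of $\gamma$ and $\balpha$ around $\alpha$. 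This is exactly the formula the paper starts from (citing \cite{rafi:LT} and \cite{minsky:TG}), and the substantive work is to compare the twist term with the term $\I(\gamma,\balpha)\,\ell_x(\alpha)$ in the proposition's sum.

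This is also why your dismissal of the transverse-curve intersections (``contribute at most a bounded multiple of those already counted'') fails. For $\alpha\in\calA$, the number $\I(\gamma,\balpha)$ can be enormous compared to $\I(\gamma,\alpha)$ precisely when $\gamma$ twists heavily around $\alpha$; that term is not absorbed by anything you have written, and in fact it is the term that encodes the twisting. The paper shows, using $\twist_\alpha(x,\gamma)\ladd \I(\gamma,\balpha)/\I(\gamma,\alpha)$, that the twist contribution is bounded above by $\I(\gamma,\alpha)\ell_x(\balpha)+\I(\gamma,\balpha)\ell_x(\alpha)$, and then separately bounds $\I(\gamma,\balpha)\ell_x(\alpha)$ back in terms of the Minsky-type sum to get the reverse inequality. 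Without this two-way comparison your argument gives neither direction of $\emul$ in the thin part. The extremal-length statement has the same issue, with $\twist_\alpha(x,\gamma)^2$ appearing in the analogous cited formula.
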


  \begin{proof}

     We first prove the statement for the hyperbolic length of $\gamma$.
     Consider the $(\ep_0,\ep_1)$-decomposition  $(\calA, \calY)$ for $x$.
     For each $Y\in \calY$, let  $\mu_Y$ be the set of curves in $\mu_x$
     that are contained entirely in $Y$. Note that if $\alpha \in \mu_Y$,
     then so is $\balpha$. The set $\mu_Y$ fills the surface $Y$, that is,
     every curve in $Y$ intersects some curve in $\mu_Y$. For every curve
     $\gamma$ in $Y$ define
     \[
        \I(\gamma, \mu_Y) = \sum_{\alpha \in \mu_Y} \I(\gamma,\alpha).
     \] 
     It is a consequence of \cite[Corollary 3.2]{rafi:LT} and
     \cite{minsky:TG} that $\ell_x(\gamma)$ can be estimated using the
     following sum: 
     \begin{equation} \label{Eq:HypMinsky} 
        \ell_x(\gamma) 
        \emul \sum_{Y\in\calY} \I(\gamma,\mu_Y) + 
        \sum_{\alpha\in\calA} \I(\gamma,\alpha)\left[
        \log{\frac{1}{\ell_x(\alpha)}} + \ell_x(\alpha)
        \twist_\alpha(x,\gamma)\right]. 
     \end{equation} 
     Here, $\twist_\alpha(x,\gamma) = d_A(\balpha, \gamma)$ (see
     \cite{minsky:PR} and \cite{rafi:HT} for more details). We need to show
     \begin{equation}\label{Eq:HypEstimate}
        \ell_x(\gamma) \emul 
        \sum_{Y \in \calY} \sum_{\alpha\in\mu_Y} 
        \I(\gamma, \alpha) \, \ell_x(\balpha) + 
        \sum_{\alpha\in\calA}  \Big[ \I(\gamma,\alpha) \,
        \ell_x(\balpha)
        + \I(\gamma,\balpha) \, \ell_x(\alpha) \Big] 
     \end{equation} 
     which is just a rephrasing of the statement of the proposition for the
     hyperbolic length. We will show that the right hand sides of Equations
     \eqref{Eq:HypMinsky} and \eqref{Eq:HypEstimate} are comparable. 
     
     To start, note that for every $\alpha \in \mu_Y$, we have
     $\ell_x(\balpha) \emul 1$. Hence
     \begin{equation} \label{Eq:Thick}
        \sum_{Y \in \calY} \sum_{\alpha\in\mu_Y}
        \I(\gamma, \alpha)\, \ell_x(\balpha) \emul
        \underset{Y\in\calY}{\sum} \I(\gamma,\mu_Y).
     \end{equation}
     Now consider $\alpha \in\calA$. 
     By the collar lemma, the hyperbolic length of the dual curve 
     $\balpha$ is roughly the width of the collar around $\alpha$. That is, 
     \[
        \ell_x(\balpha)  \emul  \log \frac{1}{\ell_x(\alpha)}.
     \]
     Summing over $\alpha \in \calA$ we obtain
     \begin{equation} \label{Eq:Collar} 
       \sum_{\alpha \in \calA } \I(\gamma, \alpha) \, \ell_x(\balpha) \emul
       \sum_{\alpha \in \calA } \I(\gamma, \alpha) \, \log
       \frac{1}{\ell_x(\alpha)}.
     \end{equation}
     We now compare the last terms. Assume $\gamma$ intersects
     some curve $\alpha\in \calA$. From the discussion in
     \cite[Section 3]{minsky:PR} we have
     \[
       \twist_\alpha(x,\gamma) \ladd  \frac{\I(\gamma,\balpha)}{\I(\gamma,
       \alpha)}. 
     \]
     To make the error multiplicative, we add a large term to the right side:
     \[
        \twist_\alpha(x,\gamma) \lmul
        \frac{\ell_x(\balpha)}{\ell_x(\alpha)} +
        \frac{\I(\gamma,\balpha)}{\I(\gamma, \alpha)}.
     \]
     Summing over $\alpha \in \calA$ and multiplying by 
     $\I(\gamma, \alpha) \, \ell_x(\alpha)$ we obtain
     \[  
        \sum_{\alpha \in \calA } \I(\gamma, \alpha)\, \ell_x(\alpha)
        \twist_\alpha(x,\gamma) 
        \lmul \sum_{\alpha \in \calA } \I(\gamma, \alpha) \, \ell_x(\balpha) +
         \I(\gamma, \balpha) \,\ell_x(\alpha).
     \]
     Thus the right hand side of \eqref{Eq:HypMinsky} is bounded above by
     the right hand side of \eqref{Eq:HypEstimate} up to a multiplicative
     error. 
     
     It remains to find an upper bound for $\I(\gamma, \balpha)\,
     \ell_x(\alpha) $, $\alpha \in \calA$, using terms in the right hand
     side of \eqnref{HypMinsky}. Since our inequalities are up to a
     multiplicative error, finding an upper bound for each such term
     provides an upper bound for the sum.
     
     Consider the regular neighborhood $A_\alpha$ of $\alpha$. If $\ep_0$
     is small enough, $\gamma$ intersects $\alpha$ every time it enters
     $A_\alpha$. The number of intersection points between $\gamma$ and
     $\balpha$ inside of $A_\alpha$ is bounded by
     $\I(\gamma,\alpha)\twist_\alpha(x,\gamma) $ and the number of
     intersection points outside of $A_\alpha$ is less than the number of
     intersection points between $\gamma$ and $\calP$, the set of pants
     curves in $\mu_x$ (every time $\gamma$ intersects $\balpha$ it either
     twists around $\alpha$ and intersects $\alpha$ or it will intersect
     some curve in $\calP$ before intersecting $\balpha$ again). That is, 
     \[ 
       \I(\gamma, \balpha) \lmul \I(\gamma,\alpha)\, \twist_\alpha(x,\gamma
        ) + \I(\gamma, \calP). 
     \]
     Since, for any $\beta \in \calP$, $\ell_x(\alpha) \leq \ell_x(\bbeta)$
     we have
     \[
        \I(\gamma, \balpha) \, \ell_x(\alpha)\lmul \I(\gamma,\alpha)
        \ell_x(\alpha) \, \twist_\alpha(x, \gamma) + \sum_{\beta \in
        \calP}\I(\gamma, \beta) \ell_x(\bbeta). 
    \]
    Up to a multiplicative error, this is less than the right hand side of
    \eqref{Eq:HypMinsky}. Thus the right hand side of
    \eqref{Eq:HypEstimate} is bounded above by the right hand side of
    \eqref{Eq:HypMinsky} up to a multiplicative error. Therefore, the two
    quantities are equal. This completes the proof of the first statement. 
     
     To prove the statement for extremal length, we can follow the same
     path. We have the following estimate for the extremal length of
     a curve (this is Theorem 7 in \cite{rafi:LQC} which follows essentially 
     from \cite{minsky:PR}) analogous to \eqnref{HypMinsky}: 
     \begin{equation} \label{Eq:ExtMinsky} 
        \Ext_x(\gamma) \emul 
        \underset{Y\in\calY}{\sum} \I(\gamma,\mu_Y)^2  + 
        \underset{\alpha\in\calA}{\sum} \I(\gamma,\alpha)^2\left[ \frac{1}{\Ext_x(\alpha)}
        + \Ext_x(\alpha) \twist_\alpha (x,\gamma)^2 \right]
         \quad \nonumber 
     \end{equation} 
     Similar to \eqnref{Thick}, we have
     \begin{equation}\label{Eq:ExtThick}
       \underset{Y\in\calY}{\sum} \I(\gamma,\mu_Y)^2 
       \emul 
       \sum_{\alpha\in\mu_Y}
       \I(\gamma, \alpha)^2 \, \Ext_x(\balpha). \nonumber 
     \end{equation} 
     For any $\alpha \in \calA$, the version of the collar lemma for
     extremal length says:
     \[
       \Ext_x(\balpha)
        \emul 
       \frac{1}{\Ext_x(\alpha)}.
     \] 
     The rest of the proof is essentially identical.  
  \end{proof} 

  \subsection*{Upper bound from any marking}
  
  \label{Sec:AnyMarking}
  
  In the following, we use a surgery argument on curves to derive an upper
  bound for the hyperbolic length of a curve using an arbitrary marking.
  Although we do not need such a precise estimate, our argument produces a
  multiplicative error of 2. 
  
  \begin{proposition} \label{Prop:AnyUpper}
  
     Let $x \in \T$ and $\mu$ be an arbitrary marking on $S$. Then for
     every curve $\gamma$, 
     \begin{equation} \label{Eq:UpperBound} 
        \ell_x(\gamma) 
        \lmul 
        \sum_{\alpha \in \mu} \I(\gamma, \alpha) \, \ell_x(\balpha) 
     \end{equation}

  \end{proposition}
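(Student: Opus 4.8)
The plan is to bound $\ell_x(\gamma)$ by exhibiting a curve homotopic to $\gamma$ that runs along the given marking $\mu$ with controlled combinatorics, and then estimating its length term by term. Fix a hyperbolic metric $x$, and realize all curves of $\mu$ (both the pants curves in $\calP$ and the duals in $\calQ$) by their $x$--geodesic representatives. Since $\mu$ is a marking, the curves of $\mu$ fill $S$: the complement $S \setminus \mu$ is a union of disks (and once-punctured disks). The idea is to homotope $\gamma$ so that it is carried by a regular neighborhood $N$ of the $1$--complex $\bigcup_{\alpha \in \mu} \alpha$; this neighborhood deformation retracts onto that graph, and $\gamma$ then decomposes into arcs, each of which travels along (a neighborhood of) a single curve $\alpha \in \mu$ for a while before switching to another curve at an intersection point. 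The number of such arcs traveling along $\alpha$, and the number of times each wraps around $\alpha$, is what must be controlled.

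First I would set up the surgery: put $\gamma$ in minimal position with respect to every curve of $\mu$, and then ``pull $\gamma$ onto the spine'' — i.e. push $\gamma$ into $N$ by a homotopy that moves each sub-arc of $\gamma$ lying in a complementary disk to the boundary of that disk. This replaces $\gamma$ by a homotopic curve $\gamma'$ lying in $N$ whose length is at most the length of the portion of $\partial N$ it traverses. The key bookkeeping is: how many times does $\gamma'$ run along a neighborhood of a fixed $\alpha \in \mu$? Each maximal sub-arc of $\gamma'$ running along $\alpha$ enters and leaves at intersection points of $\alpha$ with other curves of $\mu$, and each such sub-arc wraps around $\alpha$ a bounded number of times (one, up to bounded error, since between consecutive full wraps the curve would have to cross $\alpha$ again or close up, contradicting minimal position with the pants curves and the bounded-intersection structure of a marking). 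So the total contribution to $\ell_x(\gamma')$ from the part near $\alpha$ is $\lmul \I(\gamma, \alpha)\,\ell_x(\alpha) + (\text{entering/leaving cost})$. Here one must be careful: for $\alpha$ in the pants system $\calP$ with a short dual $\balpha$, the ``length of $\alpha$'' itself does not appear on the right-hand side of \eqref{Eq:UpperBound} — only $\ell_x(\balpha)$ does. But when $\alpha$ is short, $\ell_x(\balpha)$ is large (of order $\log(1/\ell_x(\alpha))$ by the collar lemma), and running along $\alpha$ is cheap, so these terms are dominated; when $\alpha$ is not short, $\ell_x(\alpha) \emul 1 \emul \ell_x(\balpha')$ for any dual and again the term is absorbed. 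So after regrouping, every arc of $\gamma'$ contributes an amount bounded by $\I(\gamma,\alpha)\,\ell_x(\balpha)$ for an appropriate $\alpha \in \mu$, and summing gives the claimed bound with multiplicative constant, in fact $2$ with care.

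The main obstacle I anticipate is precisely the twisting estimate around the short curves $\alpha \in \calA$: a priori $\gamma$ could wrap many times around such an $\alpha$, and each wrap costs $\ell_x(\alpha)$, which is small, so this is fine for the length — but one has to make sure the \emph{number of arcs} and the transitions between curves of $\mu$ do not blow up the count beyond $\I(\gamma,\cdot)$. The resolution is that the combinatorial structure of a marking (each pants curve meets exactly one dual, minimally, and duals are otherwise disjoint from $\calP$) forces any excursion of $\gamma'$ that does not make progress in intersection number to be short, so the surgery can be organized so that the number of traversals along $\balpha$ is $\lmul \I(\gamma,\alpha)$ and the number along $\alpha$ itself is $\lmul \I(\gamma,\balpha) + \I(\gamma, \calP)$, each contributing, after multiplying by the relevant hyperbolic length and using $\ell_x(\alpha) \le \ell_x(\bbeta)$ for $\beta\in\calP$ exactly as in the proof of \propref{ShortEquality}, a quantity bounded by the right-hand side of \eqref{Eq:UpperBound}. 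Summing over all curves in $\mu$ then yields the proposition.
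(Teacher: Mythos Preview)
There is a genuine gap. The step where you write ``when $\alpha$ is not short, $\ell_x(\alpha) \emul 1$'' and later invoke ``$\ell_x(\alpha) \le \ell_x(\bbeta)$ for $\beta\in\calP$ exactly as in the proof of \propref{ShortEquality}'' is precisely where the argument breaks. Both assertions are features of a \emph{short} marking: the pants curves of $\mu_x$ are bounded in $x$ because they were chosen shortest-first. For an \emph{arbitrary} marking $\mu$ --- the hypothesis of this proposition --- a pants curve $\alpha$ may have $\ell_x(\alpha)$ arbitrarily large, so neither bound is available and your absorption step fails. In effect you have re-derived the short-marking case, not the arbitrary-marking upper bound. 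A related problem: your claim that each maximal sub-arc of $\gamma'$ along $\alpha$ ``wraps around $\alpha$ a bounded number of times'' is also false (take $\gamma = D_\alpha^n(\balpha)$; its spine representative runs around $\alpha$ roughly $n$ times while $\I(\gamma,\alpha)$ stays bounded), so the contribution you record as $\I(\gamma,\alpha)\,\ell_x(\alpha)$ is already wrong before you try to absorb it.

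The paper sidesteps both issues by working pair-of-pants by pair-of-pants rather than pushing to a global spine. In each pants $P$ (or one-holed torus) of the decomposition coming from $\mu$, one shows that every arc of $\gamma\cap P$ can be homotoped, after sliding its endpoints to a fixed finite set on $\partial P$, to an \emph{admissible} arc $\omega$ satisfying
\[
   \ell_x(\omega)\ \lmul\ \ell_x(\balpha_+)+\ell_x(\balpha_-)
   +\I(\omega,\balpha_+)\,\ell_x(\alpha_+)+\I(\omega,\balpha_-)\,\ell_x(\alpha_-),
\]
where $\alpha_\pm\subset\partial P$ are the boundary curves that $\omega$ meets and $\balpha_\pm$ are their dual arcs inside $P$. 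This inequality is purely topological --- it comes from writing $\omega$ as a concatenation of the dual arcs with copies of the boundary curves --- and uses nothing about which curves of $\mu$ happen to be short in $x$. Summing over all arcs of $\gamma$ then delivers the pairing $\I(\gamma,\alpha)\leftrightarrow\ell_x(\balpha)$ directly, with no comparison between lengths of different curves of $\mu$ ever required.
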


  The outline of the proof is as follows. Let $\calP$ be the pants curve
  system in $\mu$. We first perturb $\gamma$ so that the restriction of
  $\gamma$ to every pair of pants $P \in S \setminus \calP$ is a union of
  \emph{admissible} arcs. These are arcs for which the inequality
  \eqref{Eq:UpperBound} holds. Perturbing $\gamma$ will only increase its
  length. Hence, if \eqref{Eq:UpperBound} holds for every arc, it holds
  for $\gamma$ as well. 

  \subsection*{Admissible arcs} 
  
  Let $P$ be a pair of (embedded) pants in the pants decomposition
  associated with the marking $\mu$. Equip $P$ with the hyperbolic metric
  inherited from $x$. For every boundary curve $\alpha \in \partial P$, let
  $\balpha$ be a simple geodesic arc in $P$ with endpoints on $\alpha$
  separating the other two boundary components of $P$, and let $E$ be the
  set of endpoints of arcs $\balpha$. Let $\omega$ be any simple geodesic
  arc whose endpoints are in $E$, and let $\I(\omega, \balpha)$ represent
  the number of intersection points in the interior of $P$. Assume that one
  endpoint of $\omega$ lies in $\alpha_-$ and the other lies in $\alpha_+$.
  We say $\omega$ is \emph{admissible} if 
  \[
     \ell_x(\omega) \lmul 
     \ell_x(\balpha_+) + \ell_x(\balpha_-) + \I(\omega, \balpha_+) \,
     \ell_x(\alpha_+) + \I(\omega, \balpha_-) \, \ell_x(\alpha_-). 
  \]
  As we shall see, most arcs are admissible. 

  \begin{figure}[ht]
  \setlength{\unitlength}{3.6pt}
  \begin{picture}(104,34)
     \put(0,0){\includegraphics{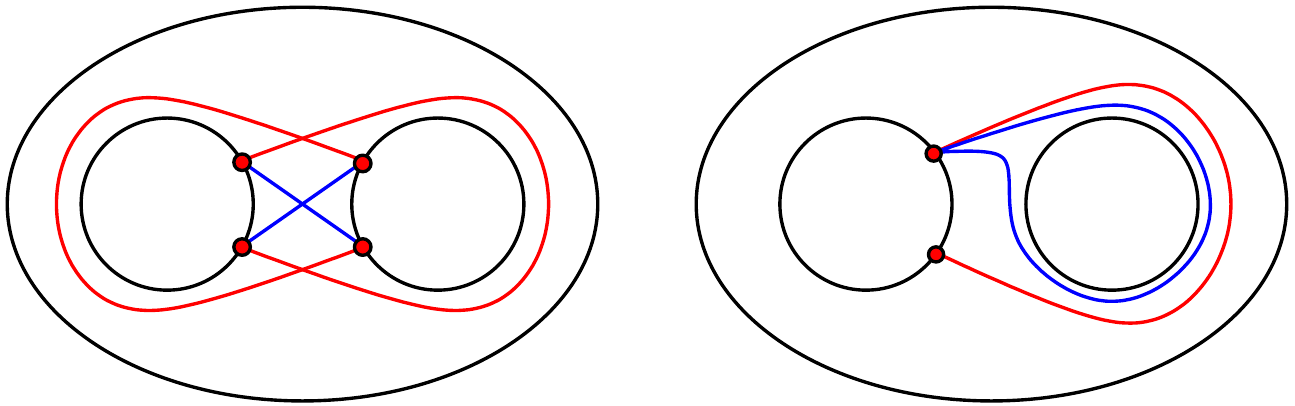}}
     \put(37,16){$\alpha_-$}
     \put(16,5){$\balpha_-$}
     \put(8,16){$\alpha_+$}
     \put(29.5,25.5){$\balpha_+$}
     \put(15,19){$p_+$}
     \put(15,13){$q_+$}
     \put(30.5,19){$p_-$}
     \put(30.5,13){$q_-$}
     \put(64,24.5){$\alpha_+=\alpha_-$}
     \put(80,6){$\balpha_+$}
     \put(70.5,19){$p_+$}
     \put(70.5,13){$q_+$}
     \put(1,27){$P$}      
     \put(57,27){$P$}  
     \put(23,18){$\omega$}  
     \put(77.5,17){$\omega$}  
  \end{picture}
  \caption{There are 12 non-admissible arcs in $P$. For each pair of
  distinct boundary components of $P$, there are two non-admissible arcs as
  depicted in the left figure (both arcs are labeled $\omega$). For each
  boundary component of $P$, there are two non-admissible arcs. The figure
  on the right depicts one such arc $\omega$. The second one is obtained
  via a reflection across the $x$-axis. } \label{Fig:Not-Pants}
  \end{figure}

  \begin{lemma} \label{Lem:Admissible-Pants}
    
    Let $\omega$ be a simple geodesic arc with endpoints in $E$. 
    Then $\omega$ is admissible unless it is one of the arcs depicted
    in \figref{Not-Pants}. In particular, if
    $\I(\omega, \balpha)>0$ for some $\alpha \in \partial P$ then 
    $\omega$ is admissible. 
    
  \end{lemma}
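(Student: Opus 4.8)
The plan is to work in the universal cover (or, more precisely, to cut $P$ open along the three geodesic arcs $\balpha$, $\alpha \in \partial P$) and carry out a direct geometric estimate on the geodesic representative of $\omega$. First I would recall the hexagonal structure of a pair of pants: cutting $P$ along the three seams $\balpha_+,\balpha_-$ (and the third arc, call it $\balpha_0$) decomposes $P$ into two right-angled hexagons. The arcs $\balpha$ and their endpoints $E$ are the combinatorial skeleton, and an arbitrary simple geodesic arc $\omega$ with endpoints in $E$ is determined, up to the finitely many exceptions, by how it crosses the seams. The key point is that a simple arc can cross each seam only a bounded number of times — in fact a simple arc between seam-endpoints crosses $\balpha_0$ at most once, and crosses each of $\balpha_+,\balpha_-$ a number of times controlled by how much it ``wraps'' around the corresponding boundary.

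The main steps, in order: (1) Enumerate, up to the symmetries of $P$, the arcs $\omega$ with $\I(\omega,\balpha)=0$ for every $\alpha\in\partial P$ — these are the homotopy classes of simple arcs disjoint from all three seams, and there are exactly $12$ of them (two for each ordered pair of distinct boundary components and two based at each single boundary component), which are precisely the arcs in \figref{Not-Pants}. For these, no bound of the asserted form can hold because the right-hand side is $\lmul \ell_x(\balpha_+)+\ell_x(\balpha_-)$ while $\omega$ itself can wind around the third cuff and be arbitrarily long; so they must be excluded. (2) For every other arc, show $\I(\omega,\balpha)>0$ for some $\alpha \in \partial P$, and then estimate $\ell_x(\omega)$ directly. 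Here one uses that $\omega$ decomposes into at most $\I(\omega,\balpha_+)+\I(\omega,\balpha_-)+O(1)$ subarcs, each lying in one of the two hexagons with endpoints on the boundary pieces, and each such subarc has length $\lmul \max(\ell_x(\balpha_+),\ell_x(\balpha_-),\ell_x(\alpha_+),\ell_x(\alpha_-))$ by a compactness/collar argument inside a right-angled hexagon with two opposite sides of bounded size. (3) Assemble: summing over the subarcs gives
\[
  \ell_x(\omega) \lmul \ell_x(\balpha_+)+\ell_x(\balpha_-) + \I(\omega,\balpha_+)\,\ell_x(\alpha_+) + \I(\omega,\balpha_-)\,\ell_x(\alpha_-),
\]
where the ``$+\ell_x(\balpha_\pm)$'' absorbs the $O(1)$ initial and terminal subarcs (whose length is comparable to the seam lengths near the endpoints) and each crossing of a seam $\balpha_\pm$ contributes a segment that must traverse the collar of $\alpha_\pm$, hence has length $\emul \ell_x(\alpha_\pm) + O(1) \lmul \ell_x(\balpha_\pm)$ when $\I(\omega,\balpha_\pm)>0$ — this last rewriting is what lets a single crossing be charged either to $\I(\omega,\balpha_\pm)\,\ell_x(\alpha_\pm)$ or to $\ell_x(\balpha_\pm)$ and gives the final ``in particular'' clause.

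The step I expect to be the main obstacle is (2): making precise the claim that each of the subarcs of $\omega$ between consecutive seam crossings has length bounded by a multiplicative constant times the relevant seam and cuff lengths, \emph{uniformly over all hyperbolic structures on $P$ with arbitrary cuff lengths}. When a cuff $\alpha$ is short, the hexagon degenerates and one must be careful that the only way a subarc can be long is by entering the thin collar of $\alpha$, which forces a genuine intersection with the seam $\balpha$ and hence is accounted for by the $\I(\omega,\balpha)\ell_x(\alpha)$ term; when a cuff is long, the seam $\balpha$ itself is long and the subarc near its endpoint is short relative to $\ell_x(\balpha)$. Handling these two regimes together — probably by splitting into the cases ``$\alpha$ short'' and ``$\alpha$ of bounded length'' and in each case using the explicit right-angled hexagon trigonometry together with the collar lemma — is the technical heart of the argument, but it is exactly the kind of estimate that yields the clean multiplicative constant (here, $2$) claimed in the statement.
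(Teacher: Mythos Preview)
Your approach diverges from the paper's, and contains a concrete error alongside a missed simplification.

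The error: the arcs $\balpha$ are \emph{not} the standard seams that cut $P$ into two right-angled hexagons. By definition each $\balpha$ has both endpoints on the \emph{same} cuff $\alpha$ and separates the other two; the hexagon seams join distinct cuffs. In fact $\balpha_+$ and $\balpha_-$ must intersect: were they disjoint, $\balpha_+$ would lie in the annular component of $P\setminus\balpha_-$ containing $\alpha_+$, would be boundary-parallel there, and hence could not separate $\alpha_-$ from $\alpha_0$. So the hexagon decomposition on which your steps (1)--(3) rest does not exist. Relatedly, the non-admissible arcs from $\alpha_+$ to $\alpha_-$ are not ``disjoint from all three seams'': any arc from $\alpha_+$ to $\alpha_-$ crosses $\balpha_0$, since $\balpha_0$ separates those cuffs.

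The missed simplification: the paper uses no direct geometric estimates at all. One writes down two explicit arcs $\omega_1,\omega_2$ from $E\cap\alpha_+$ to $E\cap\alpha_-$, each of length at most $\ell_x(\balpha_+)+\ell_x(\balpha_-)$, and observes that every simple arc $\omega$ with endpoints in $E$ is homotopic rel endpoints to a concatenation of one $\omega_i$, at most $\I(\omega,\balpha_+)$ copies of $\alpha_+$, at most $\I(\omega,\balpha_-)$ copies of $\alpha_-$, and possibly a single boundary sub-arc $[p_\pm,q_\pm]\subset\alpha_\pm$. The geodesic is no longer than this concatenation, which gives the admissibility bound whenever some $\I(\omega,\balpha_\pm)>0$ (so that the $[p_\pm,q_\pm]$ piece is absorbed by $\I(\omega,\balpha_\pm)\,\ell_x(\alpha_\pm)$). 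The only arcs left over are those disjoint from both $\balpha_+$ and $\balpha_-$ and not equal to $\omega_1$ or $\omega_2$ --- precisely the finitely many in \figref{Not-Pants}. The case $\alpha_+=\alpha_-$ is handled identically. No hexagon trigonometry, no collar lemma, no case analysis on cuff lengths; the ``main obstacle'' you anticipate simply does not arise.
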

  
  \begin{proof}
    
    First suppose $\omega$ starts and ends on two different boundary
    components of $P$. Let $\omega_1$ and $\omega_2$ be the arcs depicted
    in \figref{Admissible-Pants}. Then, up to homotopy, $\omega$ is a
    concatenation of either $\omega_1$ or $\omega_2$ with several copies of
    $\alpha_+$, several copies of $\alpha_-$ and at most one copy of the
    arcs $[p_+, q_+]$ or $[p_-, q_-]$. The number of copies of $\alpha_+$
    needed is at most $\I(\omega, \balpha_+)$ and the number of copies of
    $\alpha_-$ needed is at most $\I(\omega, \balpha_-)$. The length of
    $\omega$ is less than the sum of these arcs. 

    \begin{figure}[ht] \setlength{\unitlength}{3.6pt}
       \begin{picture}(58,40)
          \put(0,0){\includegraphics{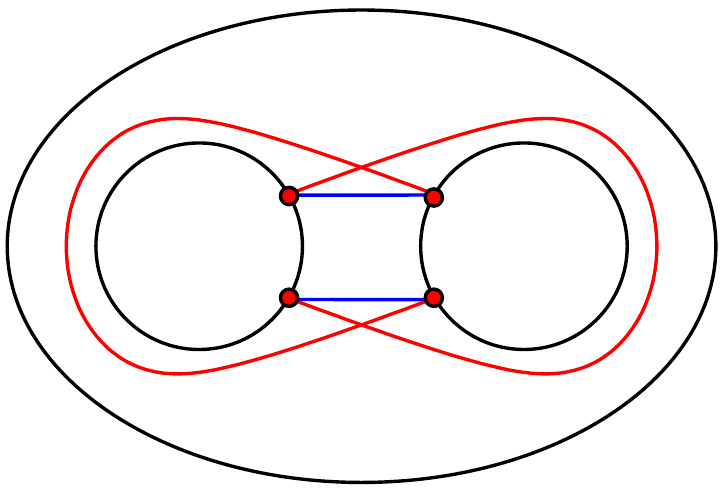}}
          \put(45,19){$\alpha_-$}
          \put(9,19){$\alpha_+$}
          \put(16,6){$\balpha_-$}
          \put(39,31){$\balpha_+$}
          \put(18.5,23){$p_+$}
          \put(18.5,15){$q_+$}
          \put(36.5,23){$p_-$}
          \put(36.5,15){$q_-$}
          \put(27,21.5){$\omega_1$}
          \put(27,16.5){$\omega_2$}            
          \put(2,32){$P$}      
       \end{picture}
       \caption{The arcs $\omega_1$ and $\omega_2$.} 
       \label{Fig:Admissible-Pants}
    \end{figure}
    
    Note that the lengths of $\omega_1$ and $\omega_2$ are both less than $
    \ell_x(\balpha_+) + \ell_x(\balpha_-)$. The lengths of copies of
    $\alpha_\pm$ needed is less than or equal to  $\I(\omega,
    \balpha_\pm)\,\ell_x(\alpha_\pm)$. If either $\I(\omega, \balpha_+)$ or
    $\I(\omega, \balpha_-)$ is non-zero then the quantity $\I(\omega,
    \balpha_\pm)\,\ell_x(\alpha_\pm)$ is also an upper bound for the length
    of the segment $[p_\pm, q_\pm]$. Hence, if $\omega$ is not admissible,
    then it is disjoint from $\balpha_\pm$ and it is not $\omega_1$ or
    $\omega_2$. The arcs depicted in the left side of \figref{Not-Pants}
    are the only possibilities. 
    
    A similar argument works when $\omega$ starts and ends on the same
    curve, that is, when $\alpha_+ = \alpha_-$. In this case, if $\omega$
    is not admissible, then it must be disjoint from $\balpha_+$ but not
    equal to it. There are only two such arcs, one with both endpoints at
    $p_+$ (see the right side of \figref{Not-Pants}) and one with both
    endpoints at $p_-$. 
  \end{proof}
  
  In the case that a pair of pants is not embedded in $x$ (when one curve
  in $x$ appears twice as a boundary of a pair of pants), the dual curve
  does not intersect the pants curves twice and the above arguments do not
  apply. In this case, the definition of an admissible arc has to be
  modified. Let $T$ be a torus with one boundary component that is an image
  of a pair of pants associated to $\mu$. Let $\alpha$ be the boundary
  curve of $T$ and $\balpha$ be a simple geodesic arc with endpoints on
  $\alpha$. Also, let $\beta$ be a simple closed curve in $T$ that is
  disjoint from $\balpha$, and let $\bbeta$ be the dual curve to $\beta$: a
  simple closed geodesic that intersects each of $\beta$ and $\balpha$
  exactly once. Let $E=\{p,q\}$ be the endpoints of $\balpha$, and let
  $\omega$ be a simple geodesic arc with endpoints in $E$. We say $\omega$
  is admissible if 
  \[
     \ell_x(\omega) \lmul \ell_x(\balpha) 
     + \I(\omega, \balpha)\,\ell_x(\alpha) 
     + \I(\omega, \beta)\,\ell_x(\bbeta)
     + \I(\omega, \bbeta)\,\ell_x(\beta).  
  \]
  
  \begin{lemma} \label{Lem:Admissible-Torus}
    
    Let $\omega$ be a simple geodesic arc with endpoints in $E$. Then
    $\omega$ is admissible unless it is an arc of a type depicted in
    \figref{Not-Torus}. In particular, if $\I(\omega, \balpha)>0$ then
    $\omega$ is admissible. 
    
  \end{lemma}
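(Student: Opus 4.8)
The plan is to mimic the structure of the proof of \lemref{Admissible-Pants}, replacing the pair-of-pants model with the one-holed torus $T$ and using the curves $\beta$, $\bbeta$ in place of the second and third boundary components. First I would pass to the cover of $T$ corresponding to the cyclic subgroup generated by $\alpha$ (or equivalently work with the cut surface $T \cutby \balpha$), so that $\omega$ becomes an arc in an annulus-like piece whose ``core data'' is recorded by how it wraps around $\alpha$ and how it crosses $\bbeta$. The cut surface $T \cutby \balpha$ is itself a pair of pants whose three boundary arcs come from two copies of $\balpha$ together with (the arc of) $\alpha$, and a distinguished essential arc in it corresponds to $\bbeta$. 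The key reduction is: up to homotopy rel $E$, any simple geodesic arc $\omega$ with endpoints in $E = \{p,q\}$ is a concatenation of one ``short model arc'' (a bounded-length arc of length $\lmul \ell_x(\balpha)$) together with at most $\I(\omega,\balpha)$ copies of $\alpha$, at most $\I(\omega,\beta)$ copies of $\bbeta$, and at most one bounded-complexity corrective piece. I would enumerate the finitely many homotopy types of the short model arc (there are only a few, since the mapping class group of $T$ fixing $\balpha$ is small) and observe that each has length $\lmul \ell_x(\balpha) + \ell_x(\bbeta)$, which is absorbed by the right-hand side of the admissibility inequality since $\I(\omega,\beta) \geq 1$ whenever the $\bbeta$-term is needed.

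The second step is to handle the arcs $\omega$ that \emph{do not} cross $\balpha$ at all, i.e.\ $\I(\omega,\balpha) = 0$. Such an arc lives in the cut surface $T \cutby \balpha$ and is an arc of the pair of pants there connecting the $\alpha$-side to itself or to a $\balpha$-side; by the same combinatorics as in \figref{Not-Pants} there are only finitely many of these that are not caught by the intersection-number terms, and these are exactly the arcs displayed in \figref{Not-Torus}. For all other arcs, either $\I(\omega,\balpha)>0$ — in which case the $\I(\omega,\balpha)\,\ell_x(\alpha)$ term together with the bounded model arc bounds $\ell_x(\omega)$, giving admissibility, which yields the ``in particular'' clause — or $\I(\omega,\balpha)=0$ but $\I(\omega,\beta)>0$, in which case the model arc together with $\I(\omega,\beta)$ copies of $\bbeta$ and the piece $[p,q]$ does the job, since $\I(\omega,\beta)\,\ell_x(\bbeta)$ dominates the length of $[p,q]$ exactly as in the pants case.

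The main obstacle I anticipate is making precise the claim that $\omega$ is homotopic to a controlled concatenation in the non-embedded setting: because $\alpha$ appears twice on the boundary of the original pair of pants, the two ``legs'' of the old dual arc have been identified, so $\omega$ can spiral around $\alpha$ in a way that also forces crossings with $\bbeta$, and one must check that these forced crossings are correctly counted by $\I(\omega,\beta)$ and $\I(\omega,\bbeta)$ rather than being lost. The cleanest way to control this is to fix a hyperbolic pair-of-pants structure on $T \cutby \balpha$, straighten $\omega$ and the relevant arcs to geodesics there, and bound the length of each combinatorial piece by the collar lemma and the fact that $\ell_x(\bbeta) \emul \log(1/\ell_x(\beta)) + \ell_x(\beta)\,\I$-type twisting is already encoded in $\ell_x(\bbeta)$ itself — so no further estimate on the twisting of $\omega$ about $\beta$ is needed beyond the raw count $\I(\omega,\beta)$. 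Once the finitely many exceptional homotopy types are identified as those in \figref{Not-Torus}, the proof concludes exactly as in \lemref{Admissible-Pants}.
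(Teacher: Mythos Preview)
Your decomposition is missing a term, and this is a genuine gap rather than a detail. You propose to write $\omega$, up to homotopy rel $E$, as a short model arc plus at most $\I(\omega,\balpha)$ copies of $\alpha$ plus at most $\I(\omega,\beta)$ copies of $\bbeta$; but the admissibility inequality has a fourth term $\I(\omega,\bbeta)\,\ell_x(\beta)$, and you never use copies of $\beta$. This is not absorbable into the other terms. Concretely, take a simple arc $\omega$ with endpoints in $E$ that is disjoint from $\balpha$, crosses $\beta$ once, and spirals $n$ times in the $\beta$--direction, so that $\I(\omega,\balpha)=0$, $\I(\omega,\beta)=1$, $\I(\omega,\bbeta)=n$. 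Its length is of order $\ell_x(\bbeta)+n\,\ell_x(\beta)$, which your bound $\lmul \ell_x(\balpha)+1\cdot\ell_x(\bbeta)$ does not control for large $n$. Such arcs are admissible (they are not among the finitely many exceptions in \figref{Not-Torus}), but not by your argument. A smaller point: $T\cutby\balpha$ is an annulus, not a pair of pants (cutting a one-holed torus along a non-separating arc raises $\chi$ from $-1$ to $0$), with core curve $\beta$; this is exactly why arcs disjoint from $\balpha$ can still spiral arbitrarily in the $\beta$--direction.

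The paper's argument supplies precisely the missing ingredient and is more direct than cutting. It observes that, up to homotopy, $\omega$ is a concatenation of several copies of $\alpha$, half of $\balpha$, a simple closed curve $\delta$ in $T$, half of $\balpha$ again, several copies of $\alpha$, and possibly the segment $[p,q]$. The key step---replacing your model-arc enumeration---is that $\pi_1(T)$, based at $\beta\cap\bbeta$, is free on $\beta$ and $\bbeta$, so $\delta$ is homotopic to a word using exactly $\I(\omega,\bbeta)$ copies of $\beta$ and $\I(\omega,\beta)$ copies of $\bbeta$; hence $\ell_x(\delta)\le \I(\omega,\beta)\,\ell_x(\bbeta)+\I(\omega,\bbeta)\,\ell_x(\beta)$. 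The number of $\alpha$--copies is at most $\I(\omega,\balpha)$, and when $\I(\omega,\balpha)>0$ the term $\I(\omega,\balpha)\,\ell_x(\alpha)$ also absorbs $[p,q]$, giving admissibility; the only arcs left over are those with $\I(\omega,\balpha)=0$ that require $[p,q]$, which are the types in \figref{Not-Torus}.
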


  \begin{figure}[ht] \setlength{\unitlength}{3.6pt}
     \begin{picture}(88,38)
        \put(0,0){\includegraphics{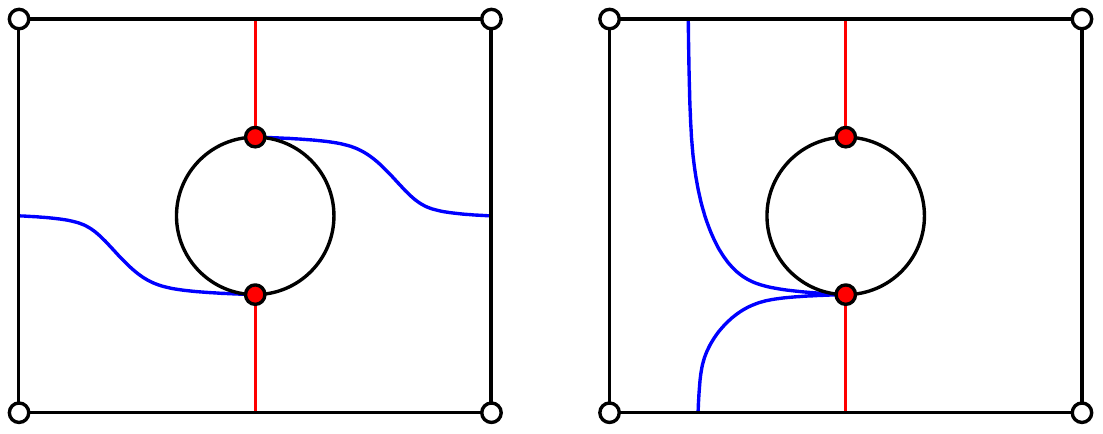}}
        \put(-2,22){$\beta$}
        \put(10,29){$\bbeta$}
        \put(19.5,19.8){$p$}
        \put(19.5,13.8){$q$}
        \put(22,4.5){$\balpha$}
        \put(26,11.5){$\alpha$}
        \put(5,13.5){$\omega$}
        \put(28,35){$T$}      
        \put(88.5,22){$\beta$}
        \put(76.5,29){$\bbeta$}
        \put(67,19.8){$p$}
        \put(67,13.8){$q$}
        \put(69.5,4.5){$\balpha$}
        \put(73.5,11.5){$\alpha$}
        \put(54,13.5){$\omega$}
        \put(58.5,35){$T$}      
     \end{picture}
     \caption{There are 6 non-admissible arcs in $T$. On the left is a
     non-admissible arc $\omega$ whose endpoints are distinct. Another
     non-admissible arc of the same type can be obtained via a reflection
     across the $x$--axis. On the right is a non-admissible arc $\omega$
     which starts and ends at the same point. The other $3$ non-admissible
     arcs of this type can be obtained via reflections across the $x$--axis
     and the $y$-axis.} \label{Fig:Not-Torus}
  \end{figure}    

  \begin{proof}  
  
     Up to homotopy, the arc $\omega$ is a concatenation of several copies
     of $\alpha$, one-half of $\balpha$, a simple closed curve $\delta$,
     then again one-half of $\balpha$ (could be the same half or the other
     half), and finally several copies of $\alpha$. One may have to add the
     arc $[p,q]$ to the beginning or to the end to ensure the arc described
     above and $\omega$ have the same endpoints. First we claim 
     \[
       \ell_x(\delta) \leq \I(\omega, \beta)\,\ell_x(\bbeta)+ \I(\omega,
       \bbeta)\,\ell_x(\beta).  
     \] 
     Consider the fundamental group of $T$ with a base point at the
     intersection of $\beta$ and $\bbeta$. Then a curve homotopic to
     $\delta$ can be written as a product of copies of $\beta$ and $\bbeta$. The
     number of copies of $\beta$ and $\bbeta$ needed is exactly $\I(\omega,
     \bbeta)$ and $\I(\omega, \beta)$ respectively. This proves the claim. 

     The number of copies of $\alpha$ needed is bounded above by
     $\I(\omega, \balpha)$. If $\I(\omega, \balpha)$ is non-zero then the
     quantity $\I(\omega, \balpha)\,\ell_x(\alpha)$ is also an upper bound
     for the length of the segment $[p, q]$.  Hence, $\omega$ is admissible
     if $\I(\omega, \balpha)>0$ or if the arc $[p,q]$ is not required to
     construct $\omega$. Arcs  of type depicted in \figref{Not-Torus} are
     the only exceptions.  
  \end{proof}

  \begin{proof}[Proof of \propref{AnyUpper}]
    
     If $\gamma$ is a curve in $\mu$ then the statement of the proposition
     is clearly true. We can further assume that there is a pants curve
     $\alpha_0 \in \mu$ so that $\gamma$ intersects both $\alpha_0$ and
     $\balpha_0$. Otherwise, $\gamma$ has to pass only through pants in the
     form discussed in \lemref{Admissible-Torus}. That means, $S$ is a
     union of two one-holed tori. That is, $S$ is a genus two surface and
     $\mu$ and $\gamma$ are as depicted in \figref{Exception}. In this
     case, it is easy to produce a curve homotopic to $\gamma$ as a
     concatenation of curves in $\mu$ and hence the proposition holds. 
    
     \begin{figure}[ht] \setlength{\unitlength}{3.6pt}
     \begin{picture}(64,34)
        \put(0,0){\includegraphics{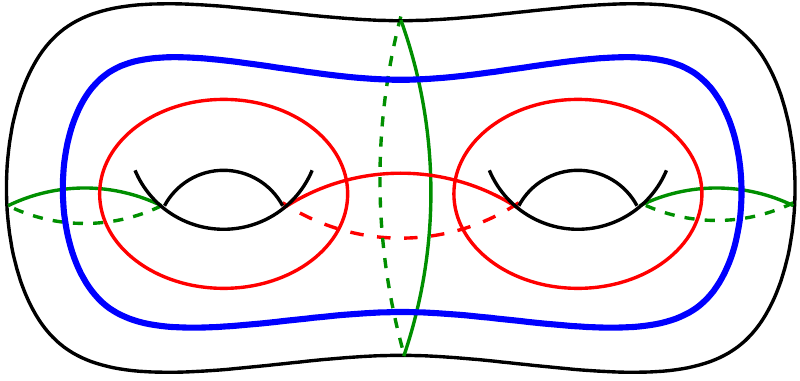}}   
        \put(8,26.5){$\gamma$}  
        \put(-3.5,13.5){$\alpha_1$}  
        \put(31,-1){$\alpha_2$}  
        \put(64.5,13.5){$\alpha_3$}  
     \end{picture}
     \caption{The thick curve which goes around both holes of the surface
     is $\gamma$. The union of the other curves form the marking $\mu$. The
     curves $\alpha_1$, $\alpha_2$, and $\alpha_3$ are the pants curves of
     $\mu$. For each $i$, the transverse curve $\balpha_i$ to $\alpha_i$ is
     the unlabeled curve which intersects only $\alpha_i$. The curve
     $\gamma$ does not intersect both $\alpha_i$ and $\balpha_i$ for any
     $i=1,2,3$.} \label{Fig:Exception}
     \end{figure}      
    
     We claim $\gamma$ can be homotoped to a curve $\gamma'$ so that
     $\gamma'$ is a union of admissible arcs and a sub-arc of $\alpha_0$.
     The curve $\gamma'$ has the same intersection pattern with the pants
     curves of $\mu$ and the intersection number of $\gamma$ with every
     transverse curve is the same as the sum of the \emph{interior
     intersection number} of $\gamma'$ with these curves. The proposition
     then follows from \lemref{Admissible-Pants} and
     \lemref{Admissible-Torus}.
    
     First perturb $\gamma$ slightly so that it does not pass through any
     intersection point between $\alpha$ and $\balpha$ for a pants curve
     $\alpha \in \mu$. We change $\gamma$ by replacing the restriction of
     $\gamma$ to a pair of pants $P$ or a torus $T$ to admissible arcs.
     Start with the pair of pants $P_0$ with the boundary curve $\alpha_0$
     and a sub-arc $\omega_0$ of $\gamma$ that starts from $\alpha_0$ and
     ends in $\alpha_1$ ($\alpha_1$ may equal $\alpha_0$). Replace
     $\omega_0$ with an admissible arc $\omega_0'$ that has the same
     intersection pattern with the dual arcs in $P_0$. Let $r_0$ and $r_1$
     be the endpoints of $\omega_0'$ in $\alpha_0$ and $\alpha_1$
     respectively. Now let $P_2$ be the pair of pants (or once-punctured
     torus) with  $\alpha_1$ as a boundary component that is not $P_0$ and
     let $\omega_1$ be the continuation of $\omega_0$ in $P_1$. Again,
     replace $\omega_1$ with an admissible arc $\omega_1'$, but make sure
     $\omega_1'$ starts at $r_1$. This is always possible by
     \lemref{Admissible-Pants} and \lemref{Admissible-Torus}; we can push
     the intersection point of $\omega_1$ with $\alpha_2$ either to the
     right or to the left and one of these two will result in an admissible
     arc. Continue in this fashion, replacing the arc $\omega_k$ which is a
     continuation of $\omega_{k-1}$ in the pair of pants (or once-punctured
     torus) $P_k$ with an admissible arc making sure that the starting
     point $r_k$ of $\omega_k'$ matches the endpoint of $\omega_{k-1}'$. We
     can do this until we reach the starting point after $K$ steps. Then
     $\alpha_K = \alpha_0$. We can ensure the arc $\omega_{K}'$ is
     admissible and it starts from $r_{K-1}$. But $r_K$ may not equal
     $r_0$. In this case, we add a sub-arc $\omega'$ of $\alpha_0$ to close
     up $\gamma'$ to a curve homotopic to $\gamma$. 
    
     If we now add up the inequalities defining admissibility, we get that
     the sum of the lengths of arcs $\omega_i'$ is less than the right-hand
     side of the inequality \eqref{Eq:UpperBound}. Also the term
     $\ell_x(\balpha_0)$ appears in the right hand side of
     \eqref{Eq:UpperBound} and provides an upper bound for the length of
     $\omega'$. That is, the right-hand side of \eqnref{UpperBound} is an
     upper bound for the length of $\gamma'$ and hence for
     $\ell_x(\gamma)$. This finishes the proof.  
  \end{proof}
   
  \begin{remark} \label{Rem:ExtUpper}
       
     If $x$ is in the thick part of \Teich space, then \propref{AnyUpper}
     also holds for extremal length. This follows from the fact that in the
     thick part, hyperbolic length is coarsely equal to the square root of
     the extremal length (see \lemref{ExtHypThick}).
        
  \end{remark}

\section{Bounded Projection to a \Teich geodesic}

  \label{Sec:Fellow}
       
  In this section, our main goal is to prove \thmref{IntroProjection} of
  the introduction. The first step is to prove \thmref{IntroMarking}, which
  allows us to estimate the Lipschitz distance from $x$ to $y$ by
  considering only how much a short marking on $x$ is stretched. The
  special case of \thmref{IntroMarking} when both $x$ and $y$ are in the
  thick part was proved in \cite{rafi:TL}. We restate \thmref{IntroMarking}
  below.
  
  \begin{theorem}[Candidate curves]\label{Thm:ShortMarking}
     
     Let $x,y\in \T$ and let $\mu_x$ be a short marking on $x$. Then 
     \[ 
        \dL(x,y) 
        \eadd
        \log\max_{\alpha \in \mu_x} \frac{\ell_y(\alpha)}{\ell_x(\alpha)}.
        \qedhere
     \]
  A curve $\alpha \in \mu_x$ satisfying $\dL(x,y) \eadd \log
  \frac{\ell_y(\alpha)}{\ell_x(\alpha)}$ is called a \emph{candidate}
  curve from $x$ to $y$.
  
  \end{theorem}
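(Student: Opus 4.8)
The plan is to prove the two inequalities separately. One direction is immediate: by Thurston's formula \eqref{Eq:Thurston}, $\dL(x,y) = \log\sup_\alpha \frac{\ell_y(\alpha)}{\ell_x(\alpha)}$ where the supremum is over \emph{all} curves, so in particular $\dL(x,y) \gadd \log\max_{\alpha\in\mu_x}\frac{\ell_y(\alpha)}{\ell_x(\alpha)}$ with no error at all. The content is the reverse inequality: that restricting attention to the finitely many curves in the short marking $\mu_x$ loses only an additive constant. So I must show that for \emph{every} curve $\gamma$,
\[
   \log\frac{\ell_y(\gamma)}{\ell_x(\gamma)} \ladd \log\max_{\alpha\in\mu_x}\frac{\ell_y(\alpha)}{\ell_x(\alpha)},
\]
equivalently that $\ell_y(\gamma) \lmul K_{\mu_x}\cdot\ell_x(\gamma)$ where $K_{\mu_x} := \max_{\alpha\in\mu_x}\frac{\ell_y(\alpha)}{\ell_x(\alpha)}$.

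The key idea is to sandwich $\ell_y(\gamma)$ and $\ell_x(\gamma)$ between the marking-length sums of Section 3. First apply \propref{AnyUpper} with the marking $\mu := \mu_x$ but measured in the metric $y$: this gives an upper bound
\[
   \ell_y(\gamma) \lmul \sum_{\alpha\in\mu_x} \I(\gamma,\alpha)\,\ell_y(\balpha).
\]
Then bound each $\ell_y(\balpha)$ and each $\ell_y(\alpha)$ by $K_{\mu_x}\ell_x(\balpha)$ and $K_{\mu_x}\ell_x(\alpha)$ respectively — valid because $\alpha$ and $\balpha$ both lie in $\mu_x$, so these ratios are at most $K_{\mu_x}$ by definition. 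This yields
\[
   \ell_y(\gamma) \lmul K_{\mu_x}\sum_{\alpha\in\mu_x}\I(\gamma,\alpha)\,\ell_x(\balpha).
\]
Finally, $\mu_x$ is a \emph{short} marking on $x$, so \propref{ShortEquality} applies in the metric $x$ and gives $\sum_{\alpha\in\mu_x}\I(\gamma,\alpha)\,\ell_x(\balpha)\emul \ell_x(\gamma)$. Combining, $\ell_y(\gamma)\lmul K_{\mu_x}\,\ell_x(\gamma)$, which is exactly what is needed. Taking logs and absorbing the multiplicative constants into additive ones (legitimate after taking logarithms) finishes the estimate, and hence the theorem.

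The subtle point — and the only place one must be careful — is that \propref{AnyUpper} is stated for hyperbolic length in a fixed metric, and here I am applying it with respect to the metric $y$ while the marking $\mu_x$ was chosen to be short with respect to $x$. This is fine: \propref{AnyUpper} holds for an \emph{arbitrary} marking in any metric, so no shortness hypothesis is being violated. The shortness of $\mu_x$ is used only in the other application, namely \propref{ShortEquality} in the metric $x$, where it is genuinely required (the two-sided estimate $\emul$ fails for a general marking — only the upper bound survives, per \propref{AnyUpper}). So the asymmetry in how the two propositions are invoked — an upper bound for $\ell_y(\gamma)$ via the weaker \propref{AnyUpper}, a two-sided estimate for $\ell_x(\gamma)$ via the stronger \propref{ShortEquality} — is precisely what makes the argument work, and is the main structural observation behind the proof. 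The case where some pair of pants in the complement of the pants system of $\mu_x$ is non-embedded is already handled inside \propref{AnyUpper} (via \lemref{Admissible-Torus} and the genus-two exceptional configuration), so no extra bookkeeping is needed here.
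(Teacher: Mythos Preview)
Your proof is correct and follows essentially the same route as the paper: apply \propref{AnyUpper} at $y$ (where $\mu_x$ is an arbitrary marking) to bound $\ell_y(\gamma)$ above, apply \propref{ShortEquality} at $x$ (where $\mu_x$ is short) to bound $\ell_x(\gamma)$ below, and compare. The only cosmetic difference is that the paper fixes a single near-optimal $\gamma$ from Thurston's formula and bounds the ratio of sums by the maximum of the termwise ratios, whereas you phrase it as a uniform bound over all $\gamma$ with $K_{\mu_x}$ factored out explicitly; these are the same computation.
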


  \begin{proof}[Proof of \thmref{ShortMarking}]
     
     By Thurston's theorem (\eqnref{Thurston}), there exists a curve
     $\gamma$ such that $\log \frac{\ell_y(\gamma)}{\ell_x(\gamma)}$ is
     within a uniform additive error of $\dL(x,y)$. We invoke
     \propref{ShortEquality} and \propref{AnyUpper} to compute the
     hyperbolic length of $\gamma$ on $x$ and $y$, using the fact that
     $\mu_x$ is short on $x$ but may not be short on $y$: 
     \[ 
        \ell_x(\gamma) 
        \emul
        \sum_{\alpha_\in\mu_x} \I(\gamma, \alpha) \ell_x(\balpha), 
        \qquad 
        \ell_y(\gamma) 
        \lmul
        \sum_{\alpha_\in\mu_x} \I(\gamma, \alpha) \ell_y(\balpha).
     \] 
     We have
     \begin{align*} 
     e^{\dL(x,y)} 
       \emul \frac{\ell_y(\gamma)}{\ell_x(\gamma)} 
     & \lmul \frac{\sum_{\alpha_\in\mu_x} \I(\gamma, \alpha)\,
       \ell_y(\balpha)} {\sum_{\alpha_\in\mu_x} 
       \I(\gamma, \alpha) \, \ell_x(\balpha)} \\ 
        &  \lmul \max_{\alpha\in \mu_x}
         \frac{\ell_y(\alpha)}{\ell_x(\alpha)}.
     \end{align*}
     The opposite inequality directly follows from the definition of
     Lipschitz distance.
  \end{proof} 

  Given a closed set $\calK \subset \T$ and
  $x \in \T$, define 
  \[ 
    \dL(x, \calK) = \underset{y \in \calK}{\inf} \, \dL(x,y). 
  \]
  The \emph{closest-point projection} of $x \in \T$ to $\calK$ with
  respect to the Lipschitz metric is 
  \[ 
     \pi_{\calK}(x) = \big\{ y \in \calK \st \dL(x,y) = \dL(x, \calK) \big\}.
  \] 
  The projection is always nonempty, but it could contain more than one
  point. We can also project a set $B \subset \T$ to $\calK$:
  $\pi_{\calK}(B) = \cup_{x \in B} \pi_{\calK}(x)$. 
  
  We will use \thmref{ShortMarking} to analyze the closest-point projection
  in the Lipschitz metric to a cobounded \Teich geodesic $\GT$.
  Parametrizing $\GT$ by arc length (in the \Teich metric), we denote
  points along $\GT$ by $\GT(t)$. Along $\GT$, we have the following
  relationship between the hyperbolic length and the extremal length of a
  curve: 
  
  \begin{lemma}[\cite{minsky:PR}] \label{Lem:ExtHypThick}
     
     For any $x$ in the thick part of \T and any curve $\alpha$, 
     \[ 
        \ell_x(\alpha) \emul \sqrt{\Ext_x(\alpha)}.
     \]
   
  \end{lemma}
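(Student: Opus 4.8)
The plan is to prove the two inequalities $\ell_x(\alpha) \lmul \sqrt{\Ext_x(\alpha)}$ and $\sqrt{\Ext_x(\alpha)} \lmul \ell_x(\alpha)$ separately, using the variational characterization of extremal length together with the fact that on an $\ep$--thick surface every curve has a definite-width embedded collar. For the upper bound on extremal length, I would use the hyperbolic metric $\rho$ on $x$ itself as a test metric in the supremum defining $\Ext_x(\alpha)$: since $\Ext_x(\alpha) \ge \ell_\rho(\alpha)^2/\Area(\rho)$ and $\Area(\rho) = 2\pi|\chi(S)|$ depends only on the topology of $S$, we immediately get $\ell_x(\alpha)^2 \lmul \Ext_x(\alpha)$, hence $\ell_x(\alpha) \lmul \sqrt{\Ext_x(\alpha)}$. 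Note this half does not even require thickness.

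For the reverse inequality I would build a flat test metric supported on an embedded annulus around the geodesic representative of $\alpha$. When $x$ is $\ep$--thick, the collar lemma gives an embedded annular neighborhood $A$ of the geodesic representative of $\alpha$ whose modulus is bounded below by a constant $m_0 = m_0(\ep) > 0$ depending only on $\ep$. By the standard relation between extremal length and modulus of an embedded annulus with core $\alpha$, we have $\Ext_x(\alpha) \le 1/\mathrm{Mod}(A) \le 1/m_0$; combined with the fact that $\ell_x(\alpha) \ge \ep$ (so $\ell_x(\alpha) \gmul 1$ as well, but more relevantly $\ell_x(\alpha)$ is bounded below), this shows $\Ext_x(\alpha)$ and $\ell_x(\alpha)^2$ are both trapped between constants whenever $\alpha$ is itself short — but of course $\alpha$ need not be short, so the real content is the scaling. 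The cleaner route: take the conformal annulus $A$ of definite modulus; in the flat conformal metric on $A$ making it a right cylinder of circumference $\ell_x(\alpha)$ (up to bounded factors, since the hyperbolic and flat metrics on a definite-modulus collar are bi-Lipschitz with constants depending only on $\ep$) and height $\emul \ell_x(\alpha)$, a direct computation of $\ell_\rho(\alpha)^2/\Area(\rho)$ gives a lower bound $\gmul \ell_x(\alpha)^2$ for this particular test metric, hence $\Ext_x(\alpha) \gmul \ell_x(\alpha)^2$.

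The main obstacle is making precise the comparison between the hyperbolic metric restricted to the collar and the flat cylinder metric, so that the circumference of the core in the flat metric really is comparable to $\ell_x(\alpha)$; this is where $\ep$--thickness enters essentially, since for a thin curve the collar is long and thin and the core's flat length in a fixed-modulus sub-annulus would be much smaller than its hyperbolic length, breaking the lower bound. Once the bi-Lipschitz comparison on a definite-modulus collar is in hand (this is classical — e.g. from the explicit hyperbolic collar formula, a collar of modulus $\ge m_0$ is $L(m_0)$--bi-Lipschitz to a Euclidean cylinder of the same modulus after rescaling), the area and length computations are routine. Alternatively, one may simply cite \cite{minsky:PR}, where exactly this estimate is derived as part of the analysis of extremal length in the thick part; since the lemma is attributed there, I would present the short test-metric argument above and remark that it also follows from the more general estimates of \cite{minsky:PR} and \cite{rafi:LQC}.
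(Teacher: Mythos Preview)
The paper does not prove this lemma; it is quoted from \cite{minsky:PR} without argument, so there is no paper proof to compare against. I will simply assess your proposal.

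Your easy direction $\ell_x(\alpha)^2 \lmul \Ext_x(\alpha)$, obtained by taking the hyperbolic metric as a test metric and using Gauss--Bonnet for the area, is correct and standard, and indeed does not require thickness.

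The hard direction $\Ext_x(\alpha) \lmul \ell_x(\alpha)^2$ has a genuine gap. Your key claim---that on an $\ep$--thick surface the collar lemma provides every simple closed geodesic with an embedded annular neighborhood of modulus at least some $m_0(\ep)>0$---is false. The collar half-width around a geodesic of length $\ell$ is $\operatorname{arcsinh}\big(1/\sinh(\ell/2)\big)$, which tends to $0$ as $\ell\to\infty$; thickness bounds the systole from below, not the length of the particular curve $\alpha$, which may be arbitrarily large. Correspondingly $\Ext_x(\alpha)$ is unbounded on a fixed thick surface, so no constant upper bound of the sort your argument would produce can hold. Your ``cleaner route'' does not repair this: plugging a test metric into the variational definition of $\Ext_x$ yields only a \emph{lower} bound for $\Ext_x(\alpha)$ (the direction you already established), and in any case the ratio $\ell_\rho(\alpha)^2/\Area(\rho)$ for a flat cylinder of circumference $c$ and height $h$ equals $c/h$, which is the fixed constant $1/m_0$ when the modulus is fixed, not something comparable to $\ell_x(\alpha)^2$.

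The upper bound genuinely requires the deeper extremal-length analysis of \cite{minsky:PR} (equivalently, the combinatorial length formulas underlying \propref{ShortEquality}); it is not a consequence of the collar lemma alone.
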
   
  
  \noindent Furthermore, the length of $\alpha$ in either sense varies
  along $\GT(t)$ coarsely like $\cosh(t)$ \cite[Equation (2)]{rafi:HT}.
  Therefore, it makes sense to talk about a point $x_{t_\alpha} =
  \GT(t_\alpha)$ on which the length of $\alpha$ is minimal, and away from
  $x_{t_\alpha}$ in either direction the length of $\alpha$ grows
  exponentially. If there are several minimal points, then we choose
  $t_\alpha$ arbitrarily among them. We call $t_\alpha$ the \emph{balanced
  time} of $\alpha$.

  The first statement of the following lemma is a consequence of
  \cite[Lemma 3.3]{minsky:QP}. The second statement follows immediately
  from the first one and \lemref{ExtHypThick}.   
  
  \begin{lemma} \label{Lem:Minsky} 
     
     There exist constants $c_1$, $c_2$, and $D$, depending only on $S$,
     so that for any curves $\alpha$ and $\beta$ and any cobounded \Teich
     geodesic $\GT$, 
     \[ 
        |t_{\alpha} - t_{\beta}| \geq D
        \quad \Longrightarrow \quad
        \I(\alpha,\beta)^2 
        \geq 
        c_1 \, e^{2 |t_{\alpha} - t_{\beta}|} \,
        \Ext_{x_{t_{\alpha}}}(\alpha) \, \Ext_{x_{t_{\beta}}}(\beta) 
     \]
     and    
     \[ 
        |t_{\alpha} - t_{\beta}| \geq D
        \quad \Longrightarrow \quad
        \I(\alpha,\beta) 
        \geq 
        c_2 \, e^{|t_{\alpha} - t_{\beta}|} \, \ell_{x_{t_{\alpha}}}(\alpha) \,
        \ell_{x_{t_{\beta}}}(\beta). 
     \]
  \end{lemma}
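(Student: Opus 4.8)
The plan is to reduce both statements to the extremal-length inequality from \cite[Lemma 3.3]{minsky:QP} together with the $\cosh$-behavior of extremal length along a cobounded \Teich geodesic. First I would recall the precise form of Minsky's estimate: along a cobounded \Teich geodesic $\GT$, for any curve $\alpha$ the function $t \mapsto \Ext_{\GT(t)}(\alpha)$ behaves coarsely like $\Ext_{x_{t_\alpha}}(\alpha)\cdot \cosh(t - t_\alpha)$, so that away from the balanced time it grows like $e^{|t - t_\alpha|}$ times its minimum. The content of \cite[Lemma 3.3]{minsky:QP} that I would invoke is the quantitative comparison: there is a constant depending only on the coboundedness so that, evaluating at $x_{t_\beta} = \GT(t_\beta)$, one has $\Ext_{x_{t_\beta}}(\alpha) \gmul e^{2|t_\alpha - t_\beta|}\,\Ext_{x_{t_\alpha}}(\alpha)$ once $|t_\alpha - t_\beta| \geq D$ for a suitable threshold $D$ (the threshold absorbs the additive slack in the coarse $\cosh$ estimate and lets the exponential term dominate).

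With that in hand, the first inequality follows by feeding this into Minsky's extremal-length version of the collar/intersection bound~\eqref{Eq:Minsky}, namely $\Ext_x(\alpha)\,\Ext_x(\beta) \geq \I(\alpha,\beta)^2$, applied at the single point $x = x_{t_\beta}$. Indeed, $\I(\alpha,\beta)^2 \le \Ext_{x_{t_\beta}}(\alpha)\,\Ext_{x_{t_\beta}}(\beta)$, and $\Ext_{x_{t_\beta}}(\beta) \emul \Ext_{x_{t_\beta}}(\beta)$ is just the minimum of the $\cosh$ for $\beta$ (up to a bounded multiplicative error, since $t_\beta$ is within the coarse $\cosh$ trough), while $\Ext_{x_{t_\beta}}(\alpha) \gmul e^{2|t_\alpha - t_\beta|}\,\Ext_{x_{t_\alpha}}(\alpha)$ by the previous paragraph. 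Wait --- I need the inequality the other direction for $\alpha$, so let me instead run the argument with the roles arranged so the growth factor appears as a lower bound: applying \eqref{Eq:Minsky} at $x_{t_\alpha}$ gives $\I(\alpha,\beta)^2 \le \Ext_{x_{t_\alpha}}(\alpha)\,\Ext_{x_{t_\alpha}}(\beta)$, which is the wrong direction; the correct route is to use that Minsky's Lemma~3.3 is itself a \emph{lower} bound on $\I(\alpha,\beta)^2$ in terms of the product of the minimal extremal lengths times $e^{2|t_\alpha-t_\beta|}$, obtained precisely by combining \eqref{Eq:Minsky} evaluated at an intermediate point with the two-sided exponential growth of each factor away from its own balanced time. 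So the cleanest exposition is: cite \cite[Lemma 3.3]{minsky:QP} directly for the first displayed inequality, fixing the constant $c_1$ and threshold $D$ it provides.

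The second inequality is then immediate: by \lemref{ExtHypThick}, since $\GT$ is cobounded every point $\GT(t)$ is in the thick part, so $\ell_{x_{t_\alpha}}(\alpha) \emul \sqrt{\Ext_{x_{t_\alpha}}(\alpha)}$ and likewise for $\beta$ at $t_\beta$. Taking square roots of the first inequality and folding the multiplicative constants from \lemref{ExtHypThick} into a new constant $c_2$ gives $\I(\alpha,\beta) \geq c_2\, e^{|t_\alpha - t_\beta|}\,\ell_{x_{t_\alpha}}(\alpha)\,\ell_{x_{t_\beta}}(\beta)$ whenever $|t_\alpha - t_\beta| \geq D$, as claimed. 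The only genuine obstacle is locating and correctly transcribing the quantitative constants in \cite[Lemma 3.3]{minsky:QP}; everything else is bookkeeping of multiplicative errors, which is harmless since the threshold $D$ can be enlarged to swallow any bounded additive discrepancy in the $\cosh$ estimates.
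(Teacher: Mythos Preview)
Your final approach is correct and matches the paper's exactly: the paper simply says the first inequality is a consequence of \cite[Lemma 3.3]{minsky:QP} and the second follows immediately from the first together with \lemref{ExtHypThick}. Your mid-proposal detour through \eqref{Eq:Minsky} at a single point indeed goes the wrong way (as you noticed), so just drop that paragraph and cite Minsky's lemma directly for the lower bound on $\I(\alpha,\beta)^2$.
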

  
  \begin{lemma} \label{Lem:Balanced}

     Let $\GT$ be a cobounded \Teich geodesic. Suppose $x\in \T$ is a point
     not on $\GT$ and $x_t \in \pi_{\GT}(x)$. Then for any $\alpha \in
     \mu_x$, we have $|t-t_{\alpha}|=O(1)$.  
  
  \end{lemma}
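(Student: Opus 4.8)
The plan is to show that, as a function of $s$, the Lipschitz distance $\dL\big(x,\GT(s)\big)$ is, up to a uniform additive error, the ``V--shaped'' function $|s-t_0|+\log C$, where $t_0$ is the balanced time of a pants curve of $\mu_x$ and $C>0$ is a constant independent of $s$. Granting this, the conclusion is immediate: since $x_t\in\pi_{\GT}(x)$ we have $\dL(x,x_t)\le \dL\big(x,\GT(t_0)\big)$, and evaluating the coarse formula at $s=t$ and at $s=t_0$ gives $|t-t_0|+\log C\ladd \log C$, hence $|t-t_0|=O(1)$; combined with the fact (proved below) that $|t_0-t_\alpha|=O(1)$ for every $\alpha\in\mu_x$, this yields $|t-t_\alpha|=O(1)$ as desired.

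First I would fix a pants curve $\alpha_0\in\mu_x$, set $t_0:=t_{\alpha_0}$, and show that every curve of $\mu_x$ has balanced time within $O(1)$ of $t_0$. The point is that $\alpha_0$ meets every curve of $\mu_x$ at most twice: the other pants curves of $\mu_x$ are disjoint from $\alpha_0$, and each transverse curve meets every pants curve at most twice by the definition of a marking. Apply the second inequality of \lemref{Minsky} to the pair $(\alpha_0,\delta)$ for each $\delta\in\mu_x$. Since $\GT$ is cobounded, $\ell_{x_{t_{\alpha_0}}}(\alpha_0)\gmul 1$ and $\ell_{x_{t_\delta}}(\delta)\gmul 1$ (no essential curve can be shorter than the systole, which stays bounded below on $\GT$); hence if $|t_0-t_\delta|\ge D$ we obtain $2\ge\I(\alpha_0,\delta)\gmul e^{|t_0-t_\delta|}$, forcing $|t_0-t_\delta|=O(1)$. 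In all cases $|t_0-t_\delta|=O(1)$.

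Next I would establish the coarse formula. Applying \thmref{ShortMarking} with the source point $x$ (and hence its short marking $\mu_x$) fixed,
\[
  \dL\big(x,\GT(s)\big)\ \eadd\ \log\max_{\alpha\in\mu_x}\frac{\ell_{\GT(s)}(\alpha)}{\ell_x(\alpha)}.
\]
Along a cobounded \Teich geodesic the hyperbolic length of each curve $\alpha$ varies coarsely like $\cosh$ about its minimum at $x_{t_\alpha}$, so $\ell_{\GT(s)}(\alpha)\emul \ell_{x_{t_\alpha}}(\alpha)\,e^{|s-t_\alpha|}$ with a multiplicative constant depending only on $S$ (using $\cosh u\emul e^{|u|}$). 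Since $|t_\alpha-t_0|=O(1)$ by the previous step, $e^{|s-t_\alpha|}\emul e^{|s-t_0|}$ uniformly, and therefore
\[
  \dL\big(x,\GT(s)\big)\ \eadd\ \log\!\Big(e^{|s-t_0|}\max_{\alpha\in\mu_x}\frac{\ell_{x_{t_\alpha}}(\alpha)}{\ell_x(\alpha)}\Big)\ =\ |s-t_0|+\log C,\qquad C:=\max_{\alpha\in\mu_x}\frac{\ell_{x_{t_\alpha}}(\alpha)}{\ell_x(\alpha)},
\]
all implied errors depending only on $S$ under our standing conventions on $\ep$ and $K$.

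Combining the two steps as in the first paragraph finishes the proof. The only real content is the coarse V--shape of $s\mapsto \dL\big(x,\GT(s)\big)$: reducing to the finitely many curves of $\mu_x$ via \thmref{ShortMarking}, invoking the $\cosh$--behavior of their lengths along $\GT$, and---this is the step that genuinely uses coboundedness---knowing via \lemref{Minsky} that all the balanced times $t_\alpha$, $\alpha\in\mu_x$, cluster within $O(1)$ of one another. I do not expect a serious obstacle beyond keeping the additive and multiplicative errors uniform; the main point to get right is that a single pants curve of $\mu_x$ already controls the balanced times of all the others, which is what lets one collapse the max over $\mu_x$ into a single exponential centered at $t_0$.
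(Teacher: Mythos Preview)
Your proof is correct and uses the same ingredients as the paper---\thmref{ShortMarking}, \lemref{Minsky}, and the $\cosh$--behavior of lengths along a cobounded $\GT$---but organizes them differently. You first prove that \emph{all} balanced times $t_\delta$, $\delta\in\mu_x$, cluster near a single $t_0$ (using that a pants curve of $\mu_x$ meets every other marking curve at most twice), and from this deduce a global coarse formula $\dL\big(x,\GT(s)\big)\eadd |s-t_0|+\log C$ valid for every $s$; the lemma then drops out by evaluating at $s=t$ and $s=t_0$. The paper's argument is more local: for a given $\alpha\in\mu_x$ it picks the candidate curve $\beta\in\mu_x$ from $x$ to the single point $x_{t_\alpha}$, notes $|t_\alpha-t_\beta|=O(1)$ via \lemref{Minsky}, and compares $\dL(x,x_t)$ with $\dL(x,x_{t_\alpha})$ directly through the growth of $\beta$. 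Your route yields a slightly stronger intermediate statement (the V--shape of $s\mapsto\dL(x,\GT(s))$), while the paper's avoids having to cluster all the balanced times at once by choosing $\beta$ adapted to the target point.
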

  
  \begin{proof} 
  
     Let $\beta \in \mu_x$ be a candidate curve from $x$ to $x_{t_\alpha}$.
     The curves $\alpha$ and $\beta$ have bounded intersection number, so
     by \lemref{Minsky}, $|t_\alpha-t_\beta | = O(1)$ (note that since
     $\GT$ is cobounded, the quantities $\ell_{x_{t_\alpha}}(\alpha)$ and
     $\ell_{x_{t_\beta}}(\beta)$ are bounded below). Away from $t_\beta$, the
     length of $\beta$ grows exponentially. We have
     \[ e^{\dL(x,x_t)} \ge
     \frac{\ell_{x_t}(\beta)}{\ell_x(\beta)} \gmul e^{\big(
     |t-t_\alpha|-|t_\alpha-t_\beta| \big)}
     \frac{\ell_{x_{t_\alpha}}(\beta)}{\ell_x(\beta)}.\] Taking $\log$ on both
     sides yields 
     \[ \dL(x,x_t) \gadd |t-t_\alpha| - |t_\alpha-t_\beta| +
     \dL(x,x_{t_\alpha}).\] Since $x_t$ is the closest-point projection of $x$ to
     $\GT$, $\dL(x,x_t) \le \dL(x, x_{t_\alpha})$. Together this implies
     $|t-t_\alpha| = O(1)$.
  \end{proof} 
  
  By a \emph{Lipschitz ball} of radius $R$ centered at $x$, we will mean the set 
  \[
    B_{L}(x,R) = \{ y \in \T \st \dL(x,y) \leq R \}.
  \]
  The following is a precise formulation of \thmref{IntroProjection}.   

  \begin{theorem}[Lipschitz projection to \Teich geodesics]
  \label{Thm:Projection} 
  
     There exists a constant $b$ depending only on $S$ such that, for any
     cobounded \Teich geodesic $\GT$, any $x \in \T$, and any
     constant $R<d_{L}(x,\GT)$, we have
     \[ 
        \diam_L \Big( \pi_{\GT} \big( B_L(x,R) \big) \Big) \leq b.
     \]
  
  \end{theorem}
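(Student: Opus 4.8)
The plan is to adapt Minsky's argument for the \Teich metric, using \thmref{ShortMarking} and \lemref{Balanced} in place of the extremal‑length inequality \eqnref{Minsky}. Since $x\in B_L(x,R)$ and, by \thmref{TL}, the distance between two points of the cobounded geodesic $\GT$ is coarsely symmetric and coarsely equal to the difference of their parameters, it suffices to produce $b_0=b_0(S)$ with the following property: whenever $\dL(x,y)\le R<d_L(x,\GT)$, any $x_s\in\pi_{\GT}(x)$ and $y_u\in\pi_{\GT}(y)$ satisfy $|s-u|\le b_0$. (Taking $y=x$ also bounds $\diam_L\big(\pi_{\GT}(x)\big)$, and the triangle inequality on $\GT$ then upgrades this to the diameter bound claimed.)

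The engine is an \emph{exponential divergence} estimate: for any $z\in\T$ and any $w=\GT(t_w)\in\pi_{\GT}(z)$,
\[
\dL\big(z,\GT(t)\big)\gadd d_L(z,\GT)+|t-t_w|\qquad\text{for every }t .
\]
To see this, assume $z\notin\GT$ and let $\sigma\in\mu_z$ be a candidate curve from $z$ to $w$, so $d_L(z,\GT)=\dL(z,w)\eadd\log\frac{\ell_w(\sigma)}{\ell_z(\sigma)}$ by \thmref{ShortMarking}; by \lemref{Balanced}, $|t_\sigma-t_w|=O(1)$. Because $\GT$ is cobounded, $\ell_{\GT(t)}(\sigma)\emul\ell_{x_{t_\sigma}}(\sigma)\cosh(t-t_\sigma)$, hence $\ell_w(\sigma)\emul\ell_{x_{t_\sigma}}(\sigma)$ and $\ell_{\GT(t)}(\sigma)\gmul\ell_w(\sigma)\,e^{|t-t_w|}$. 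Plugging the single curve $\sigma$ into Thurston's formula \eqnref{Thurston},
\[
\dL\big(z,\GT(t)\big)\ge\log\frac{\ell_{\GT(t)}(\sigma)}{\ell_z(\sigma)}\gadd|t-t_w|+\log\frac{\ell_w(\sigma)}{\ell_z(\sigma)}\eadd|t-t_w|+d_L(z,\GT).
\]
Applying this with $z=x$, $w=x_s$, $t=u$ gives $\dL(x,y_u)\gadd d_L(x,\GT)+|s-u|$. On the other hand the triangle inequality yields $\dL(x,y_u)\le\dL(x,y)+\dL(y,y_u)=\dL(x,y)+d_L(y,\GT)\le R+d_L(y,\GT)$. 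Combining these and using $R<d_L(x,\GT)$,
\[
|s-u|\ladd d_L(y,\GT)-\big(d_L(x,\GT)-R\big)\lmul d_L(y,\GT).
\]

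The remaining, and most delicate, point is to control $d_L(y,\GT)$. When $y$ lies in the thick part this is routine; in general one must exploit the strict inequality $R<d_L(x,\GT)$ together with the length estimates of \secref{Marking}. Concretely, bound $d_L(y,\GT)\le\dL(y,x_s)$, feed a candidate curve $\beta\in\mu_y$ from $y$ to $x_s$ into \eqnref{Thurston}, estimate $\ell_{x_s}(\beta)$ by \propref{AnyUpper} and the relevant $x$‑lengths by \propref{ShortEquality}, transport the comparison to the marking $\mu_x$ using $\dL(x,y)\le R$, and use \lemref{Balanced} to locate the balanced times of the curves of $\mu_y$ within $O(1)$ of $u$; the $\cosh$–law then feeds back to pin $|s-u|$ below a constant depending only on $S$. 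The main obstacle — absent in Minsky's symmetric \Teich setting — is precisely the interplay between the thin parts of $x$ and $y$ and the asymmetry of $\dL$ (so that the "return" distance $\dL(y,y_u)$ cannot be traded against $\dL(x,y)$), and it is to handle exactly this that the two length‑estimate propositions of \secref{Marking} were proved.
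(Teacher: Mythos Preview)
Your exponential-divergence estimate and the resulting inequality
\[
|s-u|\ladd R+d_L(y,\GT)-d_L(x,\GT)
\]
are correct, but this is where the argument breaks. The right-hand side is not $O(1)$: since $\dL(x,y)\le R$, the quantity $d_L(y,\GT)$ is comparable to $d_L(x,\GT)$ up to errors of order $R$ (and the asymmetry of $\dL$ only makes this worse), so your bound says at best $|s-u|\ladd R$, which is merely Lipschitz, not contracting. The sketch you give for ``controlling $d_L(y,\GT)$'' does not close this gap: every step you propose---bounding $d_L(y,\GT)\le\dL(y,x_s)$, estimating $\ell_{x_s}(\beta)$ via the $\cosh$-law and \lemref{Balanced}, transporting via $\dL(x,y)\le R$---either reintroduces $R$ or $d_L(y,\GT)$ itself, and the argument becomes circular. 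The trouble is structural: routing through the triangle inequality $\dL(x,y_u)\le\dL(x,y)+\dL(y,y_u)$ loses exactly the factor you need.

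The paper's proof avoids this by never passing through $\dL(x,y_u)$. The key missing ingredient is \lemref{Minsky}: if $|t_\alpha-t_\beta|\ge D$ then $\I(\alpha,\beta)\gmul e^{|t_\alpha-t_\beta|}\,\ell_{x_{t_\alpha}}(\alpha)\,\ell_{x_{t_\beta}}(\beta)$, where $\alpha\in\mu_x$ and $\beta\in\mu_y$ are candidate curves to $\GT$. Applying \propref{ShortEquality} at $y$ gives $\ell_y(\alpha)\gmul\I(\alpha,\beta)\,\ell_y(\bbeta)$, and feeding $\alpha$ into \eqnref{Thurston} for $\dL(x,y)$ (not $\dL(x,y_u)$) yields directly
\[
\dL(x,y)\gadd|t_\alpha-t_\beta|+d_L(x,\GT)+\log\big(\ell_{x_{t_\beta}}(\beta)\,\ell_y(\bbeta)\big).
\]
Since $\dL(x,y)<d_L(x,\GT)$, it remains only to bound the last term below; the paper does this by a short separate argument (if $\ell_y(\bbeta)$ is small then $\ell_y(\beta)$ is large, and since $\beta$ is a candidate curve from $y$ to its projection this forces $\ell_{x_{t_\beta}}(\beta)\,\ell_y(\bbeta)\gmul 1$). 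This lower bound is precisely the hyperbolic-length substitute for Minsky's inequality \eqnref{Minsky}, and it---not a bound on $d_L(y,\GT)$---is what makes the proof go through.
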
 
  
  \begin{proof}
      
     Let $y \in B_L(x,R)$, and let $\mu_x$ and $\mu_y$ be the associated
     short markings on $x$ and $y$ respectively. Let $x_t \in
     \pi_{\GT}(x)$. By  \lemref{Balanced}, we can choose $\alpha \in \mu_x$
     such that
     \[
       \dL(x, \GT)\eadd  \log \frac{\ell_{x_t}(\alpha)}{\ell_x(\alpha)},
     \]
     and  \thmref{ShortMarking} implies  
     \[ \log \frac{\ell_{x_t}(\alpha)}{\ell_x(\alpha)}\eadd  
     \log \frac{\ell_{x_{t_{\alpha}}}(\alpha)}{\ell_x(\alpha)},
     \]
     where $t_\alpha$ is the balance time for $\alpha$ along $\GT$. Hence 
     \begin{equation*}
       \dL(x, \GT)
       \eadd 
       \log \frac{\ell_{x_{t_\alpha}}(\alpha)}{\ell_x(\alpha)}.
     \end{equation*}
     Similarly, choose $\beta \in \mu_y$ so that
     \begin{equation*}
        \dL(y, \GT) 
        \eadd \log \frac{\ell_{x_{t_\beta}}(\beta)}{\ell_y(\beta)}. 
     \end{equation*}
     The theorem will hold if $|t_{\alpha}-t_{\beta}|$ is uniformly
     bounded. 
     
     Let $D$ be the constant of \lemref{Minsky}. If $|t_\alpha - t_\beta| <
     D$, then we are done. So suppose $|t_{\alpha} - t_{\beta}| \geq D$, in
     which case
     \[ 
        \I(\alpha,\beta) \gmul e^{|t_{\alpha}-t_{\beta}|}
        \, \ell_{x_{t_{\alpha}}}(\alpha) \, \ell_{x_{t_{\beta}}}(\beta).
     \]
    Since $\beta \in \mu_y$, by \propref{ShortEquality},
    $\ell_y(\alpha)\gmul i(\alpha,\beta)\ell_y(\bbeta)$. Therefore,
    \begin{align*} 
       e^{\dL(x,y)}
       \geq \frac{\ell_y(\alpha)}{\ell_x(\alpha)}
       & \gmul \frac{\I(\alpha,\beta)\, \ell_y(\bbeta)}{\ell_x(\alpha)} \\
       & \gmul \frac{e^{|t_{\alpha}-t_{\beta}|} \,
          \ell_{x_{t_{\alpha}}}(\alpha) \, \ell_{x_{t_{\beta}}}(\beta) \,
          \ell_y(\bbeta)}{\ell_x(\alpha)}. 
    \end{align*}
    Applying $\log$ to both sides above yields
    \begin{equation*} 
       \dL(x,y) 
       \gadd |t_{\alpha}-t_{\beta}| + \dL(x, \GT) +
       \log \big( \ell_{x_{t_\beta}}(\beta) \, \ell_y(\bbeta) \big).
     \end{equation*} 
    On the other hand, $\dL(x,y) \le R < \dL(x, \GT)$, so the proof will be
    complete if the product $\ell_{x_{t_\beta}}(\beta) \, \ell_y(\bbeta)$
    is bounded from below. Since $\GT$ is ($\ep'$)--cobounded, the length
    of every curve on $x_{t_\beta}$ is bounded below, so we only need to
    consider the situation when $\ell_y(\bbeta)$ is small
    (say $\ell_y(\bbeta)<\ep'$). In this case, since $\beta$ and $\bbeta$
    intersect, $\beta$ has to be long
    ($\ell_y(\beta)\gmul\log\frac{1}{\ep'}$). But $\beta$ is the candidate
    curve from $y$ to a point in $\pi_{\GT}(y)$ which we know is at most a
    bounded distance away from $x_{t_\beta}$. Thus, 
    \[ 
       \frac{\ell_{x_{t_{\beta}}}(\beta)}{\ell_y(\beta)}
       \gmul
       \frac{\ell_{x_{t_{\beta}}}(\bbeta)} {\ell_y(\bbeta)}.
    \] 
    We conclude 
    \[ 
      \ell_{x_{t_{\beta}}}(\beta) \, \ell_y(\bbeta) \gmul
      \ell_{x_{t_{\beta}}}(\bbeta) \, \ell_y(\beta)
      \gmul 1. \qedhere
    \] 
  \end{proof}
  
  \subsection*{Projection in the \Teich metric}

  We now sketch a short proof that the closest-point projection with
  respect to the \Teich metric to a cobounded \Teich geodesic is strongly
  contracting. This was first established by Minsky in \cite{minsky:QP}.
  This part is independent from the rest of the paper.
  
  Let $\Pi_\GT$ be the closest-point projection to $\GT$ with respect to
  the \Teich metric.

  \begin{theorem}[\cite{minsky:QP}]
     
     For any cobounded \Teich geodesic $\GT$ and for any \Teich ball $B$
     disjoint from $\GT$, $\diam_T \big( \Pi_\GT(B) \big)$ is uniformly
     bounded. 

  \end{theorem}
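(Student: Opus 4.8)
The plan is to run the proof of \thmref{Projection} essentially verbatim, with hyperbolic length replaced by extremal length everywhere. All the tools used there have extremal-length counterparts already available: Kerckhoff's formula \eqnref{Kerck} replaces Thurston's formula \eqnref{Thurston}; the second (extremal) estimate in \propref{ShortEquality} replaces the first; \remref{ExtUpper} replaces \propref{AnyUpper} (this is why we require a \Teich \emph{ball} — every point to which \remref{ExtUpper} gets applied must be thick, and those points will all lie on the cobounded geodesic $\GT$); the first displayed implication of \lemref{Minsky} replaces the second; and the $\cosh$--like behavior of $\Ext_{\GT(t)}(\alpha)$ together with \lemref{ExtHypThick} are used exactly as before.

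First I would record the extremal analogue of \thmref{ShortMarking}: for $x,y\in\T$ with $y$ thick and $\mu_x$ short on $x$, combining Kerckhoff's formula with \propref{ShortEquality} at $x$ and \remref{ExtUpper} at $y$ gives $\dT(x,y)\eadd\tfrac12\log\max_{\alpha\in\mu_x}\frac{\Ext_y(\alpha)}{\Ext_x(\alpha)}$, the reverse inequality being immediate from \eqnref{Kerck}. Next I would prove the extremal analogue of \lemref{Balanced}: if $x_s\in\pi_{\GT}(x)$ and $\alpha\in\mu_x$, then $|s-t_\alpha|=O(1)$. The argument copies \lemref{Balanced}: pick a candidate curve $\beta\in\mu_x$ from $x$ to $x_{t_\alpha}$; since $\alpha,\beta\in\mu_x$ have $\I(\alpha,\beta)=O(1)$ and $\GT$ is cobounded, the first implication of \lemref{Minsky} forces $|t_\alpha-t_\beta|=O(1)$; then the $\cosh$ estimate gives $\Ext_{x_s}(\beta)\gmul e^{2(|s-t_\alpha|-|t_\alpha-t_\beta|)}\Ext_{x_{t_\alpha}}(\beta)$, and comparing $\tfrac12\log\frac{\Ext_{x_s}(\beta)}{\Ext_x(\beta)}\le\dT(x,x_s)=\dT(x,\GT)\le\dT(x,x_{t_\alpha})\eadd\tfrac12\log\frac{\Ext_{x_{t_\alpha}}(\beta)}{\Ext_x(\beta)}$ yields $|s-t_\alpha|=O(1)$.

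For the theorem itself, let $x$ be the center of $B$ and $y\in B$, so $\dT(x,y)<\dT(x,\GT)$; by the triangle inequality through $\Pi_\GT(x)$ and the two analogues above (which place $\pi_{\GT}(x)$ and $\pi_{\GT}(y)$ within $O(1)$ of $\GT(t_\alpha)$ and $\GT(t_\beta)$ for suitable $\alpha\in\mu_x$, $\beta\in\mu_y$) it suffices to show $|t_\alpha-t_\beta|=O(1)$, where $\dT(x,\GT)\eadd\tfrac12\log\frac{\Ext_{x_{t_\alpha}}(\alpha)}{\Ext_x(\alpha)}$ and $\dT(y,\GT)\eadd\tfrac12\log\frac{\Ext_{x_{t_\beta}}(\beta)}{\Ext_y(\beta)}$. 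Assume $|t_\alpha-t_\beta|\ge D$. Then the first implication of \lemref{Minsky} gives $\I(\alpha,\beta)^2\gmul e^{2|t_\alpha-t_\beta|}\Ext_{x_{t_\alpha}}(\alpha)\Ext_{x_{t_\beta}}(\beta)$, while $\beta\in\mu_y$ and \propref{ShortEquality} give $\Ext_y(\alpha)\gmul\I(\alpha,\beta)^2\Ext_y(\bbeta)$. Feeding both into $e^{2\dT(x,y)}\ge\Ext_y(\alpha)/\Ext_x(\alpha)$ and taking $\tfrac12\log$ yields
\[
   \dT(x,y)\gadd |t_\alpha-t_\beta| + \dT(x,\GT) + \tfrac12\log\big(\Ext_{x_{t_\beta}}(\beta)\,\Ext_y(\bbeta)\big).
\]
Since $x_{t_\beta}$ is thick, $\Ext_{x_{t_\beta}}(\beta)\gmul 1$; if also $\Ext_y(\bbeta)\gmul 1$ we conclude $|t_\alpha-t_\beta|=O(1)$ from $\dT(x,y)<\dT(x,\GT)$. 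Otherwise $\Ext_y(\bbeta)$ is small, so by \eqnref{Minsky} $\Ext_y(\beta)\ge\I(\beta,\bbeta)^2/\Ext_y(\bbeta)\gmul 1$; and because $\beta$ is a candidate from $y$ to a point within $O(1)$ of $x_{t_\beta}$, Kerckhoff's formula applied to the single curve $\bbeta$ gives $\frac{\Ext_{x_{t_\beta}}(\beta)}{\Ext_y(\beta)}\gmul\frac{\Ext_{x_{t_\beta}}(\bbeta)}{\Ext_y(\bbeta)}$, hence $\Ext_{x_{t_\beta}}(\beta)\Ext_y(\bbeta)\gmul\Ext_{x_{t_\beta}}(\bbeta)\Ext_y(\beta)\gmul 1$ (first factor bounded below by coboundedness, second because $\beta$ is long on $y$). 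Either way $\dT(x,y)\gadd|t_\alpha-t_\beta|+\dT(x,\GT)$, forcing $|t_\alpha-t_\beta|=O(1)$ and finishing the proof.

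The main obstacle is not a single hard step — each step is a faithful transcription of the hyperbolic argument — but rather the bookkeeping of thickness hypotheses: \remref{ExtUpper} (unlike \propref{AnyUpper}) and \lemref{ExtHypThick} hold only in the thick part, so one must verify that every point to which they are applied lies on the cobounded geodesic $\GT$, which is precisely why the statement concerns a \Teich ball disjoint from $\GT$ rather than an arbitrary bounded set. The one genuinely new ingredient compared with \thmref{Projection} is the appeal to Minsky's inequality \eqnref{Minsky} to deduce that $\beta$ is long on $y$ when its dual $\bbeta$ is short, replacing the collar-lemma estimate used in the hyperbolic setting.
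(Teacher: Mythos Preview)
Your proposal is correct and is exactly the approach the paper takes: the paper's proof is a three-sentence sketch saying to replace hyperbolic length by extremal length throughout the proof of \thmref{Projection}, using \remref{ExtUpper} (which needs thickness of the target) in place of \propref{AnyUpper}, and you have faithfully carried out that transcription in detail. One small expository quibble: your parenthetical explanation of ``why we require a \Teich ball'' is slightly off --- the thickness needed for \remref{ExtUpper} comes from the \emph{targets} lying on the cobounded $\GT$, independently of what $B$ is; the role of $B$ being a ball disjoint from $\GT$ is only to guarantee $\dT(x,y)<\dT(x,\GT)$ for the final comparison.
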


  \begin{proof} 
     
    As discussed before, \propref{AnyUpper} holds for extremal length as
    long as $x$ is in the thick part (see \remref{ExtUpper}). Therefore we
    have an analogue of \thmref{ShortMarking}: For any $x \in B$ and any
    $x_t \in \Pi_\GT(x)$, there exists a candidate curve $\alpha \in \mu_x$
    from $x$ to $x_t$. The same argument for \lemref{Balanced} will also
    show that $x_t$ is a bounded distance from $x_{t_\alpha}$. Replacing
    hyperbolic length by extremal length, we can carry out the same
    analysis as in \thmref{Projection} to finish the proof.
  \end{proof}

\section{Bounded projection to and stability of Lipschitz geodesics} 

  \label{Sec:Stable}

  In this section, we prove \thmref{IntroFellow} and
  \thmref{IntroContracting} of the introduction. Before we restate and
  prove the theorems, we first define what it means to fellow travel in the
  Lipschitz metric.
  
  Let $\GT(t) \from [0,d] \to \T$ and $\GL(t) \from [0,d] \to \T$ be
  respectively a \Teich and a Lipschitz geodesic parametrized by arc length
  (in their respective metric). We will say $\GL$ and $\GT$ fellow travel
  in the Lipschitz metric if there exists a constant $R$ depending only \s
  such that, for every $t \in [0,d]$, 
  \[ \max \Big\{ \dL \big( \GL(t), \GT(t) \big), \dL \big( \GT(t), \GL(t)
  \big) \Big\} \le R.\]
  
  \begin{theorem}[Lipschitz geodesic fellow travels \Teich
  geodesic]\label{Thm:Fellow}
     
     Suppose $x,y \in \T$ are thick and have bounded combinatorics. Then
     any Lipschitz geodesic $\GL$ from $x$ to $y$ is cobounded. In fact,
     $\GL$ fellow travels the \Teich geodesic with endpoints $x$ and $y$.
     More precisely, let $d=\dL(x,y)$ and let $\GT : \RR \to \T$ be the
     \Teich geodesic such that $\GT(0) = x$ and passing through $y$. Then
     $\GL \from [0,d] \to \T$ fellow travels $\GT \from [0,d] \to \T$.
  
  \end{theorem}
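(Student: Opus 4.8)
The plan is to use the strong contraction of the Lipschitz closest-point projection $\pi_{\GT}$ supplied by \thmref{Projection} to confine $\GL$ to a bounded neighborhood of $\GT$, deduce from this that $\GL$ is cobounded, and then transfer the problem to the \emph{symmetric} Teichm\"uller metric, where the fellow-travelling is a consequence of the stability of cobounded Teichm\"uller geodesics (Minsky). The hypotheses give, via \thmref{Teich}, that $\GT$ is $\ep'$--cobounded, so by \thmref{Projection} the projection $\pi_{\GT}$ is strongly contracting with some constant $b=b(S)$; moreover \thmref{TL} gives $d=\dL(x,y)\eadd\dT(x,y)=:d_T$, and we write $x=\GL(0)=\GT(0)$ and $y=\GL(d)=\GT(d_T)$, both geodesics being parametrized by arc length in their respective metrics.

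\emph{Step 1 (confinement).} Put $\rho(t)=\dL(\GL(t),\GT)$; this is continuous on $[0,d]$, and $\rho(0)=\rho(d)=0$ since both endpoints lie on $\GT$. The key point is $\rho(t)=O(1)$ for all $t$. Let $\rho^\ast=\max_t\rho(t)=\rho(t_0)$; we may assume $\rho^\ast\geq\theta$, where $\theta$ is a fixed large multiple of $b$. Let $s_-$ be the largest parameter below $t_0$ and $s_+$ the smallest above $t_0$ with $\rho(s_\pm)=\theta$; these exist by the intermediate value theorem, and by the same theorem $\rho\geq\theta$ on all of $[s_-,s_+]$. Cover $[s_-,s_+]$ by consecutive subintervals of length less than $\theta/2$; for such a subinterval $[a,a']$ one has $a'-a<\theta/2<\theta\leq\rho(a)=\dL(\GL(a),\GT)$, so $B_L(\GL(a),a'-a)$ is a Lipschitz ball disjoint from $\GT$, and \thmref{Projection} bounds the Lipschitz diameter of its $\pi_{\GT}$--image by $b$; since $\GL(a')$ lies in this ball (as $\dL(\GL(a),\GL(a'))=a'-a$), the projections $\pi_{\GT}(\GL(a))$ and $\pi_{\GT}(\GL(a'))$ are within $b$ of each other. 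Telescoping over the $O((s_+-s_-)/\theta)$ subintervals shows that, writing $\pi_{\GT}(\GL(s_-))=\GT(\tau_-)$ and $\pi_{\GT}(\GL(s_+))=\GT(\tau_+)$, one has $\dL(\GT(\tau_-),\GT(\tau_+))=O\bigl(b(s_+-s_-)/\theta\bigr)+O(b)$, hence, comparing $\dT$ and $\dL$ on the thick geodesic $\GT$ (\thmref{TL}), $|\tau_--\tau_+|=O\bigl(b(s_+-s_-)/\theta\bigr)+O(b)$. On the other hand $\GL(s_-)$ and $\GL(s_+)$ satisfy $\rho=\theta=O(1)$, so both are uniformly thick (a point at Lipschitz distance $\theta$ from the $\ep'$--thick $\GT$ has no curve shorter than $\ep'e^{-\theta}$), so \thmref{TL} applies to them and to their projections; routing $\dT(\GL(s_-),\GL(s_+))$ through $\GT(\tau_-)$ and $\GT(\tau_+)$ gives $\dT(\GL(s_-),\GL(s_+))\leq|\tau_--\tau_+|+O(\theta)=O\bigl(b(s_+-s_-)/\theta\bigr)+O(b)$, while $\dT(\GL(s_-),\GL(s_+))\gadd\dL(\GL(s_-),\GL(s_+))=s_+-s_-$. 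As $\theta$ was chosen far larger than $b$, these two estimates force $s_+-s_-=O(b)$. Finally $\rho$ decays at unit rate along forward subsegments of $\GL$, because $\rho(t_0)\leq\dL(\GL(t_0),\GL(s_+))+\rho(s_+)=(s_+-t_0)+\theta$; hence $\rho^\ast\leq\theta+(s_+-t_0)\leq\theta+(s_+-s_-)=O(b)=O(1)$.

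\emph{Step 2 (coboundedness and transfer).} From $\rho(t)=O(1)$ it follows, exactly as in the thickness observation above, that $\GL$ is $\ep''$--cobounded for a uniform $\ep''$: if $\ell_{\GL(t)}(\delta)<\ep''$ for some curve $\delta$ then $\rho(t)\geq\log(\ep'/\ep'')$, since $\ell_{\GT(s)}(\delta)\geq\ep'$ for every $s$. Now $\GL$ and $\GT$ lie in a common thick part, so by \thmref{TL} the Lipschitz and Teichm\"uller metrics are coarsely equal along them; in particular, since $\dL(\GL(s),\GL(t))=t-s$ for $s\leq t$, the path $\GL$ is a $(1,O(1))$--quasigeodesic for the Teichm\"uller metric joining the points $x$ and $y$ of $\GT$. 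As $\GT$ is cobounded it is stable in the Teichm\"uller metric (Minsky, \cite{minsky:QP}; the strongly contracting property is reproved in \secref{Fellow}), so $\GL$ stays within a bounded $\dT$--neighborhood of $\GT$. To match parametrizations, note $\dT(\GT(0),\GL(t))\eadd\dL(x,\GL(t))=t$, so the point of $\GT$ closest to $\GL(t)$ is $\GT(\sigma)$ with $|\sigma-t|=O(1)$; hence $\dT(\GL(t),\GT(t))=O(1)$, and converting back (both points being thick) $\max\{\dL(\GL(t),\GT(t)),\,\dL(\GT(t),\GL(t))\}=O(1)$, which is precisely the fellow-travelling asserted. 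Coboundedness of $\GL$ was obtained along the way.

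I expect Step 1 to be the main obstacle, and the delicate point there is the asymmetry of the Lipschitz metric: the argument must be organized so that the only balls invoked are forward balls $B_L(\cdot,R)$ (which is exactly what \thmref{Projection} provides) and the forward nearest-point projection, and so that the single place where a reverse Lipschitz estimate is needed — comparing $\GL(s_\pm)$ with its projection onto $\GT$ — occurs where the point is already known to be uniformly thick through its proximity to $\GT$, so that \thmref{TL} converts it into a forward estimate. Once this bookkeeping is in place, the remaining steps are routine.
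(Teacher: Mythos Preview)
Your argument is correct and follows the same contraction-and-shortcut strategy as the paper: use \thmref{Projection} to bound the projection of short pieces of $\GL$, force the excursion $[s_-,s_+]$ to have bounded length, conclude confinement, then match parameters. Two points of comparison are worth noting.

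First, the paper handles the asymmetry of $\dL$ in the shortcut step more directly, via a dedicated lemma (\lemref{Asymmetry}: if $a$ is thick then $\dL(a,b)\le C\,\dL(b,a)$), which lets the triangle inequality be run entirely in $\dL$: one needs $\dL(\pi_{\GT}(\overline y),\overline y)$, and since $\pi_{\GT}(\overline y)\in\GT$ is thick this is at most $C\,\dL(\overline y,\pi_{\GT}(\overline y))=Cr$. Your detour through $\dT$ at the thick endpoints $\GL(s_\pm)$ via \thmref{TL} accomplishes the same thing and is a perfectly good alternative; it just trades one consequence of \cite{rafi:TL} for another.

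Second, your appeal to Minsky's Teichm\"uller stability in Step~2 is unnecessary: Step~1 already gives $\dL(\GL(t),\GT)=O(1)$, and once you know $\GL$ is cobounded, \thmref{TL} immediately converts this to $\dT(\GL(t),\GT)=O(1)$, which is all you use afterwards. The parameter-matching you do at the end (via $\dT(x,\GL(t))\eadd\dL(x,\GL(t))=t$) is exactly the paper's concluding step.
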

  
  By previous result in \thmref{Projection}, the Lipschitz closest-point
  projection to $\GT$ is strongly contracting. This implies that, if one
  moves along $\GL$, the rate of progress of the Lipschitz projection to
  $\GT$ is inversely proportional to the distance between $\GL$ and $\GT$.
  (A segment of length $R$ passing through a point $z$ that has distance
  $R$ from $\GT$ projects to a subset of $\GT$ with uniformly bounded
  size.) In order to apply a standard short-cut argument (see proof of
  \thmref{Fellow}), we need an additional fact about the asymmetry of $\dL$
  which is a corollary of \cite[Proposition 4.1]{rafi:TL}.

  \begin{lemma} \label{Lem:Asymmetry}

     Let $x \in \T$ be thick. Then there exists a constant $C$
     depending only on $S$ such that for any $y \in \T$
     \[ \dL(x, y) \le C \dL(y , x). \]

  \end{lemma}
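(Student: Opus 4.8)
The plan is to treat two regimes separately according to the size of $\dL(y,x)$, with the real work confined to the regime where $\dL(y,x)$ is large. The starting point is \thmref{ShortMarking}. Since $x$ is $\ep$--thick, every curve $\alpha$ of the short marking $\mu_x$ has $\ell_x(\alpha)\emul 1$: its pants curves lie in $[\ep,B]$ for a Bers--type constant $B=B(S)$, and its transverse curves, being shortest dual curves on a thick surface, also have length $\emul 1$. Hence \thmref{ShortMarking} gives
\[
  e^{\dL(x,y)}\ \emul\ \max_{\alpha\in\mu_x}\ell_y(\alpha).
\]
Choosing $\alpha_0\in\mu_x$ that realizes this maximum, it suffices to prove $\ell_y(\alpha_0)\lmul e^{\dL(y,x)}$.

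To do so I would apply \propref{ShortEquality} twice. Applied on $y$ with the short marking $\mu_y$, it yields
\[
  \ell_y(\alpha_0)\ \emul\ \sum_{\beta\in\mu_y}\I(\alpha_0,\beta)\,\ell_y(\overline\beta).
\]
Applied on the thick surface $x$ with $\mu_x$, it yields for each $\beta\in\mu_y$ that $\ell_x(\beta)\emul\sum_{\alpha\in\mu_x}\I(\beta,\alpha)\,\ell_x(\overline\alpha)\gmul\I(\beta,\alpha_0)$, again because $\ell_x(\overline\alpha)\emul 1$; thus $\I(\alpha_0,\beta)\lmul\ell_x(\beta)$. Thurston's formula \eqnref{Thurston} gives $\ell_x(\beta)\le e^{\dL(y,x)}\ell_y(\beta)$ for every curve $\beta$, so
\[
  \ell_y(\alpha_0)\ \lmul\ \sum_{\beta\in\mu_y}\ell_x(\beta)\,\ell_y(\overline\beta)\ \le\ e^{\dL(y,x)}\sum_{\beta\in\mu_y}\ell_y(\beta)\,\ell_y(\overline\beta).
\]
The last input needed is that $\sum_{\beta\in\mu_y}\ell_y(\beta)\,\ell_y(\overline\beta)\lmul 1$: by the collar lemma, for each dual pair $(\beta,\overline\beta)$ the length of $\overline\beta$ is comparable to the collar width of $\beta$ when $\beta$ is short --- growing only like $\log\frac1{\ell_y(\beta)}$ --- so $\ell_y(\beta)\,\ell_y(\overline\beta)$ stays bounded even as $\ell_y(\beta)\to 0$, while $\mu_y$ has only boundedly many curves. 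This gives $\ell_y(\alpha_0)\lmul e^{\dL(y,x)}$, hence $\dL(x,y)\ladd\dL(y,x)$; in particular $\dL(x,y)\lmul\dL(y,x)$ whenever $\dL(y,x)\ge 1$.

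It remains to treat $\dL(y,x)\le 1$. Here Thurston's formula and the thickness of $x$ force $y$ into a fixed thick part: $\ell_y(\beta)\ge e^{-\dL(y,x)}\ell_x(\beta)\ge e^{-1}\ep$ for every curve $\beta$. With $x$ and $y$ both thick, the inequality $\dL(x,y)\le C\,\dL(y,x)$ is precisely \cite[Proposition 4.1]{rafi:TL}; note that \thmref{TL} by itself is not enough, since it only controls $\dL$ and $\dT$ up to an additive constant. Combining the two regimes proves the lemma. The step I expect to be the main obstacle is the bound $\sum_{\beta\in\mu_y}\ell_y(\beta)\,\ell_y(\overline\beta)\lmul 1$ --- one must check that the logarithmically long transverse curves of $\mu_y$ are always matched with the short pants curves that control them. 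One can also sidestep this product bound altogether: $\ell_x(\beta)\ge\ep$ forces the short pants curves of $y$ to satisfy $\ell_y(\beta)\gmul e^{-\dL(y,x)}$, hence $\ell_y(\overline\beta)\lmul 1+\dL(y,x)$ for every $\beta\in\mu_y$, which closes the argument with a worse multiplicative constant.
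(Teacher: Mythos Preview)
Your main argument is correct and takes a genuinely different route from the paper's. The paper proves the lemma by passing through the \Teich metric and extremal length: it quotes \cite[Proposition 4.1]{rafi:TL} in the form $\dT(x,y)\emul\max\{\dL(x,y),\dL(y,x)\}$, takes a Kerckhoff-optimal curve $\alpha$ for $\dT(y,x)$, and uses that $\Ext_x(\alpha)\emul\ell_x(\alpha)^2$ (thickness) while $\Ext_y(\alpha)\gmul\ell_y(\alpha)^2$ always, to deduce $\dL(y,x)\gmul\dT(y,x)\gmul\dL(x,y)$. You instead stay entirely in hyperbolic length, chaining \thmref{ShortMarking} and \propref{ShortEquality} through both short markings $\mu_x$ and $\mu_y$. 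The product bound $\sum_{\beta\in\mu_y}\ell_y(\beta)\ell_y(\overline\beta)\lmul 1$ that worries you is not an obstacle: it is the standard collar estimate, exactly the computation you sketch. The paper's approach is shorter once one imports the $\dT$-versus-$\dL$ comparison; yours is more self-contained within the tools of this paper and shows that the marking length estimates already encode the asymmetry bound.

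One caveat on your Case~2. You write that for $x,y$ both thick, the inequality $\dL(x,y)\le C\dL(y,x)$ ``is precisely \cite[Proposition 4.1]{rafi:TL}''. That is not what the paper itself extracts from that proposition: it cites it as $\dT\emul\max\{\dL(x,y),\dL(y,x)\}$, which does not by itself give the asymmetric multiplicative bound. In fact both your Case~1 and the paper's displayed chain, read carefully, deliver only the affine estimate $\dL(x,y)\le\dL(y,x)+D$ (the $\gmul$ in the paper's chain is loose, since passing to logs turns multiplicative errors into additive ones). Since the lemma is applied only when $\dL(y,x)\ge r=2b$, this is harmless in context; but if you want the statement literally as written, the small-distance case needs a genuine local argument (comparability of the forward and reverse Thurston Finsler norms on the thick part), not just a citation.
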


  \begin{proof}

     From \cite[Proposition 4.1]{rafi:TL} we have (in \cite{rafi:TL} $\dL$
     is the symmetrized Lipschitz metric): 
     \begin{equation} \label{Eq:RC}
       \dT(x,y) \emul \max \{ \dL(x,y), \dL(y,x) \}.
     \end{equation} 
     By \eqnref{Kerck}, there is a curve $\alpha$ such that 
     $\dT(y,x) \emul \frac{1}{2} \log
     \frac{\Ext_y(\alpha)}{\Ext_x(\alpha)}.$ Since $x$ is thick, by
     \lemref{ExtHypThick}, $\Ext_x(\alpha)\emul \ell_x(\alpha)^2$. Since
     the extremal length is defined as a supremum over all metrics in a
     conformal class, we have $\Ext_y(\alpha) \gmul \ell_y(\alpha)^2$.
     Hence, 
     \[ \dL(y,x) \ge \log \frac{\ell_x(\alpha)}{\ell_y(\alpha)} \gmul
     \frac{1}{2} \log \frac{\Ext_x(\alpha)}{\Ext_y(\alpha)} \emul \dT(y,x). \]
     Also by \eqnref{RC}, $\dT(x,y) \gmul \dL(x,y)$. The lemma follows from
     the symmetry of the \Teich metric.
  \end{proof}

  \begin{proof}[Proof of \thmref{Fellow}]
    
    By assumption, $x$ and $y$ have bounded combinatorics, thus $\GT$ is
    cobounded by \thmref{Teich}. We will first show that there exists $R$
    such that, for any $x \in \GL$, there exists $x' \in \GT$ with
    $\dL(x,x') \le R$. That is, $\GL$ is contained in an $R$ Lipschitz
    neighborhood of $\GT$.

    For any $r > 0$, suppose a subinterval $[\overline{x}, \overline{y}]
    \subset \GL$ is such that $\dL(\overline{x}, \GT ) = \dL( \overline{y},
    \GT) = r$,  but $\dL(\overline{z}, \GT) > r$ for all other points
    $\overline{z} \in [\overline{x}, \overline{y}]$. By cutting
    $[\overline{x},\overline{y}]$ into segments of length at most $r$ and
    projecting each piece to $\GT$, we have
     \[
        \dL \big( \pi_{\GT}(\overline{x}), \pi_{\GT}(\overline{y}) \big)
        \le \frac{b}{r} \, \dL(\overline{x},\overline{y}) + b,
     \]
     where $b$ is the constant of \thmref{Projection}. Now fix $r=2b$. By
     the triangle inequality, 
     \begin{align*}
        \dL(\overline{x},\overline{y})
        & \le \dL \big( \overline{x}, \pi_{\GT}( \overline{x}) \big) 
        + \dL \big(\pi_{\GT}(\overline{x}), \pi_{\GT}(\overline{y}) \big) 
        + \dL \big(\pi_{\GT}(\overline{y}), \overline{y} \big) \\
        & \le r + \left( \frac{b}{r} \dL(\overline{x}, \overline{y}) + b
        \right) + C \dL \big(\overline{y} , \pi_{\GT}(\overline{y}) \big) \\
        & \le 2b + \left( \frac{1}{2} \dL(\overline{x}, \overline{y}) + b 
        \right) + C2b.
     \end{align*}
     We obtain $\dL(\overline{x},\overline{y}) \le 6b + 4Cb$. Therefore,
     any $\overline{z} \in [\overline{x},\overline{y}]$ is contained in an
     $R=8b+4Cb$ Lipschitz neighborhood of $\GT$. In view of
     \lemref{Asymmetry}, this also shows that $\GT$ is contained in a $CR$
     Lipschitz neighborhood of $\GL$. In particular, $\GL$ is cobounded
     (for some constant depending only on \s).      

     Now parametrize $x_t = \GL(t)$ and $y_t = \GT(t)$ such that $x=\GL(0)
     = \GT(0)$. We have shown that for any $t \in [0,d]$, $d=\dL(x,y)$,
     there exists $s$ such that $\dL \big( x_t, y_s \big) \le R$. The proof
     will be complete if $s \eadd t$. We have:
     \[
        s = \dT(x,y_s) \eadd \dL(x,y_s) \eadd \dL(x, x_t) = t.
     \]
     Thus for every $t \in [0,d]$, we have $\dL(x_t,y_t) \ladd 1$. The same
     thing is true for $\dL(y_t,x_t)$ since $\GL$ is cobounded.
  \end{proof}

  We now show that the closest-point projection to $\GL$ is also strongly
  contracting. As a corollary, $\GL$ is stable. The precise formulations
  are below.

  \begin{theorem}[Bounded projection to Lipschitz geodesics]
  \label{Thm:Contracting}
    
     Suppose $x, y \in \T$ are thick and have bounded combinatorics. There
     exists a constant $R$ such that whenever $\GL$ is a Lipschitz geodesic
     from $x$ to $y$ and $B$ is a Lipschitz ball with 
     \[ 
        \dL (B, \GL) = \min_{z \in B} \dL(z, \GL) > R, 
     \]
     then the Lipschitz projection of $B$ to $\GL$ is uniformly bounded.
     
  \end{theorem}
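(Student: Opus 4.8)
The plan is to reduce Theorem~\ref{Thm:Contracting} to Theorem~\ref{Thm:Projection} by using the fellow-traveling statement of Theorem~\ref{Thm:Fellow}. Since $x$ and $y$ are thick and have bounded combinatorics, Theorem~\ref{Thm:Fellow} tells us that $\GL$ fellow travels the \Teich geodesic $\GT$ with the same endpoints: there is a constant $R_0$, depending only on $S$, with $\dL(\GL(t),\GT(t)),\dL(\GT(t),\GL(t))\le R_0$ for all $t$. The strategy is to show that the closest-point projection $\pi_{\GL}$ and the closest-point projection $\pi_{\GT}$ agree up to bounded error, so that the strong contraction of $\pi_{\GT}$ established in Theorem~\ref{Thm:Projection} transfers to $\pi_{\GL}$.

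First I would record the comparison between the two projections. Given $z\in \T$, let $z_t\in\pi_{\GT}(z)$. Because $\GT$ and $\GL$ are $R_0$-fellow travelers, the point $\GL(t)$ satisfies $\dL(z,\GL(t))\le \dL(z,z_t)+\dL(z_t,\GL(t))\le \dL(z,\GT)+R_0$, so $\dL(z,\GL)\le\dL(z,\GT)+R_0$; symmetrically $\dL(z,\GT)\le\dL(z,\GL)+R_0$, using \lemref{Asymmetry} to control $\dL(\GL(t),z)$ versus $\dL(z,\GL(t))$ if needed. Hence $\dL(z,\GL)\eadd\dL(z,\GT)$. Moreover, any $w\in\pi_{\GL}(z)$ is within bounded Lipschitz distance of a point of $\GT$ (again by fellow-traveling), and an elementary estimate using the $\cosh$-like behaviour of lengths along $\GT$ — equivalently, reusing the argument of \lemref{Balanced} — shows that this point of $\GT$ must lie within a bounded parameter-distance of any element of $\pi_{\GT}(z)$: a point on $\GT$ that is far from $\pi_{\GT}(z)$ is too far from $z$ to be (near) a closest point. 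Thus $\pi_{\GL}(z)$ lies in a bounded Lipschitz neighbourhood of $\pi_{\GT}(z)$, and vice versa.

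With this in hand, the main estimate is immediate. Let $B$ be a Lipschitz ball with $\dL(B,\GL)>R$ for a constant $R$ to be chosen. Using $\dL(z,\GT)\ge\dL(z,\GL)-R_0$, we get $\dL(B,\GT)\ge R-R_0$; choosing $R$ large enough (say $R$ exceeding $R_0$ plus the ``$R$'' needed for \thmref{Projection} to produce a nonempty picture) makes $B$ a Lipschitz ball disjoint from $\GT$ with the hypothesis of \thmref{Projection} satisfied, after possibly enlarging the radius by $R_0$ and applying \lemref{Asymmetry}. Theorem~\ref{Thm:Projection} then bounds $\diam_L(\pi_{\GT}(B))$ by the constant $b$. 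Since $\pi_{\GL}(B)$ is contained in a bounded Lipschitz neighbourhood of $\pi_{\GT}(B)$ by the previous paragraph, $\diam_L(\pi_{\GL}(B))$ is bounded by $b$ plus a constant depending only on $S$.

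The main obstacle is the bookkeeping around the asymmetry of $\dL$: distances and projections must be taken in a consistent direction, and at several points one must pass between $\dL(z,\cdot)$ and $\dL(\cdot,z)$. Here \lemref{Asymmetry} is the key tool, but it applies only when the first point is thick; since $\GL$ and $\GT$ are cobounded (by \thmref{Fellow}), every point we apply it to on the geodesics is thick, and the ball $B$ can be shrunk or the point $z$ relocated along $\GL$ to a thick point without changing the conclusion. Once one is careful that each invocation of \lemref{Asymmetry} has a thick point in the first slot, the argument goes through with only constants depending on $S$.
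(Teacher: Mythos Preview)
Your proposal is correct and follows essentially the same route as the paper: reduce to \thmref{Projection} via \thmref{Fellow}, and show that $\pi_{\GL}(z)$ and $\pi_{\GT}(z)$ are a bounded distance apart. The paper makes explicit the step you describe as ``reusing the argument of \lemref{Balanced}'': it picks a candidate curve $\alpha\in\mu_z$ for $\dL(z,\GT)$, and uses the $\cosh$-growth of $\ell_{\GT(s)}(\alpha)$ to derive $\dL(z,\GT(s))\gadd |s-t_\alpha|+\dL(z,\GT)$, which is exactly your assertion that ``a point on $\GT$ far from $\pi_{\GT}(z)$ is too far from $z$.''
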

  
  \begin{proof}
     
     Let $\GT$ be the \Teich geodesic from $x$ to $y$. Let $R$ be the
     minimum constant such that $\GL$ is contained in the $R$ Lipschitz
     neighborhood of $\GT$ (\thmref{Fellow}). With this $R$, any Lipschitz
     ball $B$ satisfying the criterion of the theorem is disjoint from
     $\GT$. Therefore, by \thmref{Projection}, the projection of $B$ to
     $\GT$ has uniformly bounded diameter. To see that the projection of
     $B$ to $\GL$ also has uniformly bounded diameter, it suffices to show
     that, for any $z \in B$, the distance between $\pi_\GT \circ
     \pi_\GL(z)$ and $\pi_{\GT}(z)$ is uniformly bounded. 
     
     We refer to \figref{Contracting} for this proof. By \lemref{Balanced},
     $\pi_{\GT}(z)$ is uniformly bounded from $x_{t_\alpha} =
     \GT(t_\alpha)$, where $\alpha \in \mu_z$ is a candidate curve for the
     Lipschitz distance from $z$ to $\GT$, and $t_\alpha$ is the balanced
     time for $\alpha$. Now let $w \in \pi_\GL(z)$ and let $x_t \in
     \pi_{\GT}(w)$. We will show $|t_\alpha - t|$ is uniformly bounded.
     Choose a point $w' \in \GL$ so that $\dL(w', x_{t_\alpha})$ is
     minimal. In particular, $\dL(w', x_{t_\alpha}) \le R$, and 
     \begin{equation} \label{Eq:Upper} 
        \dL(z, w) \le \dL(z, w') \le \dL(z, x_{t_\alpha}) + CR,
     \end{equation} 
     where $C$ is the constant of \lemref{Asymmetry}. On the other hand, 
     \begin{align} \label{Eq:Lower} 
        \dL(z, w) 
        & \ge \log \frac{\ell_w(\alpha)}{\ell_z(\alpha)}\\ 
        & = \log \frac{\ell_w(\alpha)}{\ell_{x_t}(\alpha)} + \log
        \frac{\ell_{x_t}(\alpha)}{\ell_{x_{t_\alpha}}(\alpha)} + \log
        \frac{\ell_{x_{t_\alpha}}(\alpha)}{\ell_z(\alpha)} \nonumber \\ 
        & \gadd \log \frac{\ell_w(\alpha)}{\ell_{x_t}(\alpha)} + |t_\alpha - t| 
        + \dL(z, x_{t_\alpha}). \nonumber 
    \end{align} 
    Since $x_t \in \pi_{\GT}(w)$, $\dL(w,x_t) \le R$. Hence, 
    \[ \log \frac{\ell_w(\alpha)}{\ell_{x_t}(\alpha)} = - \log
    \frac{\ell_{x_t}(\alpha)}{\ell_w(\alpha)} \ge - \dL(w,x_t) \ge -R, \]
    Putting this together with \eqnref{Upper} and \eqnref{Lower}
    yields $|t_\alpha-t| \ladd (C+1)R$.  
  \end{proof}
  
  \begin{figure}[ht]
    \setlength{\unitlength}{3.6pt}
    \begin{picture}(90,45)
     \put(.6,.8){\includegraphics[width=130\unitlength]{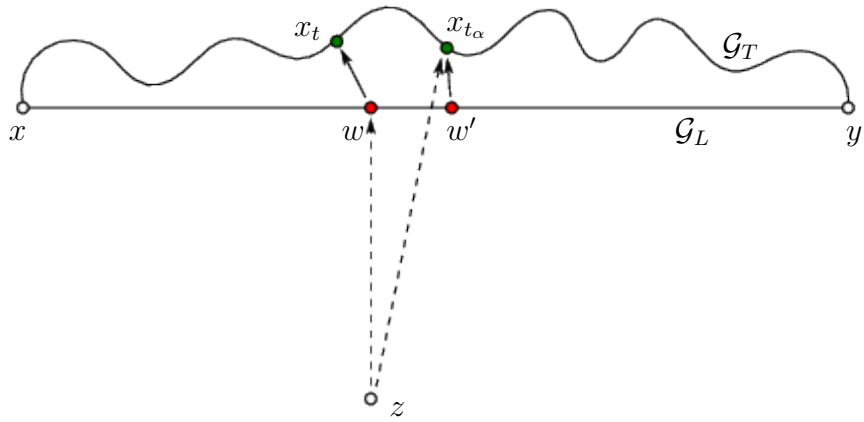}}
     \put(0,29)   {$x$}
     \put(88,29)  {$y$}
     \put(35,29)  {$w$}
     \put(46,29)  {$w'$}
     \put(30,40)  {$x_t$}
     \put(46,40.5){$x_{t_\alpha}$}
     \put(40,0)   {$z$}
     \put(70,29)  {$\GL$}
     \put(75,38)  {$\GT$}
    \end{picture}
     \caption{Bounded projection to Lipschitz geodesics} 
     \label{Fig:Contracting}
  \end{figure}
  
  \begin{corollary}[Stability of Lipschitz geodesics]\label{Cor:Stable}
     
     Suppose $x, y \in \T$ are thick and have bounded combinatorics. Then
     any Lipschitz quasi-geodesic from $x$ to $y$ (after reparametrization)
     fellow travels any Lipschitz geodesic from $x$ to $y$.  
  
  \end{corollary}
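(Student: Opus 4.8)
The plan is to run the standard ``strongly contracting $\Rightarrow$ stable'' argument; it is structurally identical to the short-cut argument in the proof of \thmref{Fellow}, with $\GL$ now playing the role $\GT$ played there and an arbitrary Lipschitz quasi-geodesic playing the role of $\GL$. Fix a Lipschitz geodesic $\GL$ from $x$ to $y$ and a $(K,C)$-quasi-geodesic $q\from[0,N]\to\T$ from $x$ to $y$; after joining consecutive points by Lipschitz geodesics (Thurston's existence theorem) we may assume $q$ is continuous, at the cost of enlarging $K,C$. The ingredients are: \thmref{Contracting}, which gives a radius $R$ and a bound $b$ such that any Lipschitz ball at Lipschitz distance $>R$ from $\GL$ has $\pi_\GL$-image of diameter $\le b$; \thmref{Fellow}, which tells us $\GL$ is cobounded, hence every point of $\GL$ is thick; and \lemref{Asymmetry}, which we may therefore apply to any ``wrong-direction'' Lipschitz distance emanating from a point of $\GL$.

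The first step is to show that $q$ lies in a bounded Lipschitz neighborhood of $\GL$. If not, pick $t_0$ with $\dL\big(q(t_0),\GL\big)$ large, fix a large constant $r>2R$ (to be pinned down at the end), and let $[\alpha,\beta]\ni t_0$ be the maximal subinterval on which $\dL\big(q(t),\GL\big)\ge r$; by continuity $\dL\big(q(\alpha),\GL\big)=\dL\big(q(\beta),\GL\big)=r$. Cut $q|_{[\alpha,\beta]}$ into consecutive subarcs of Lipschitz diameter at most $r/2$; by the $(K,C)$-inequalities this takes at most $K'\big(\tfrac1r\,\dL(q(\alpha),q(\beta))+1\big)$ subarcs for a constant $K'$ depending on $K,C$. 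Each subarc lies in a Lipschitz ball of radius $r/2$ about one of its points, and that ball sits at Lipschitz distance $\ge r-r/2=r/2>R$ from $\GL$, so \thmref{Contracting} bounds the diameter of its $\pi_\GL$-image by $b$. Summing over the subarcs and applying the triangle inequality,
\[
   \dL\big(q(\alpha),q(\beta)\big)\ \le\
   \dL\big(q(\alpha),\pi_\GL(q(\alpha))\big)
   +\dL\big(\pi_\GL(q(\alpha)),\pi_\GL(q(\beta))\big)
   +\dL\big(\pi_\GL(q(\beta)),q(\beta)\big),
\]
where the first term is $r$; the middle term is at most $\tfrac{bK'}{r}\,\dL\big(q(\alpha),q(\beta)\big)+bK'$; and the last term is at most $C\,\dL\big(q(\beta),\pi_\GL(q(\beta))\big)=Cr$ by \lemref{Asymmetry}, which is legitimate because $\pi_\GL(q(\beta))$ lies on the cobounded geodesic $\GL$. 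Choosing $r$ with $\tfrac{bK'}{r}\le\tfrac12$ and solving gives a uniform bound on $\dL\big(q(\alpha),q(\beta)\big)$; the $(K,C)$-inequalities then bound $|\beta-\alpha|$ and hence $\dL\big(q(\alpha),q(t)\big)$ for all $t\in[\alpha,\beta]$, and enlarging $r$ past this bound contradicts the choice of $t_0$. Thus $q$ stays within some Lipschitz distance $R_1$ of $\GL$.

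For the reverse inclusion and the synchronization of parameters I would copy the last paragraph of the proof of \thmref{Fellow}. Parametrize $\GL$ by Lipschitz arc length from $x$; for each $t$ pick $w_t=\GL(s(t))$ with $\dL\big(q(t),w_t\big)\le R_1$, so that $s(t)=\dL(x,w_t)\eadd\dL\big(x,q(t)\big)$, the estimate using $\dL\big(w_t,q(t)\big)\le C R_1$ from \lemref{Asymmetry} ($w_t\in\GL$ is thick). Reparametrizing $q$ by the coarsely monotone parameter $\dL(x,\cdot)$ — this is where ``after reparametrization'' enters — one gets $\dL\big(q(s),\GL(s)\big)$ and, via \lemref{Asymmetry} again, $\dL\big(\GL(s),q(s)\big)$ uniformly bounded for all $s$; in particular each $\GL(s)$ is Lipschitz-boundedly close to $q(s)$, so $\GL$ lies in a bounded neighborhood of $q$ as well. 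That is the fellow-traveling claimed in \corref{Stable}. The one genuinely delicate point is the asymmetry of $\dL$: every triangle inequality used against the direction in which $\dL$ was measured must be routed through \lemref{Asymmetry}, and that lemma is available only because its first argument is always a point of $\GL$, which is thick precisely by \thmref{Fellow}; the secondary bookkeeping cost is that $q$ is only a quasi-geodesic, so the cutting step lives in the parameter of $q$ and the final stability constant depends on $K,C$ as well as on the topology of $S$.
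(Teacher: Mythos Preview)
Your proposal is correct and is precisely the approach the paper takes: the paper's entire proof is the single sentence ``The same argument in the proof of \thmref{Fellow} can be applied here, except now $\GL$ will play the role of $\GT$, and any Lipschitz quasi-geodesic from $x$ to $y$ will play the role of $\GL$,'' and you have faithfully unpacked that sentence, correctly identifying \thmref{Contracting} as the replacement for \thmref{Projection}, \thmref{Fellow} as the source of coboundedness of $\GL$ (so that \lemref{Asymmetry} is available), and the extra bookkeeping needed because $q$ is only a quasi-geodesic.
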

  
  \begin{proof}

     The same argument in the proof of \thmref{Fellow} can be applied here.
     Except now $\GL$ will play the role of $\GT$, and any Lipschitz
     quasi-geodesic from $x$ to $y$ will play the role of $\GL$. 
  \end{proof}
  
  We remark that, in general, a Lipschitz geodesic from $x$ to $y$ is not a
  Lipschitz geodesic from $y$ to $x$, even after reparametrization. One
  does not even expect the Hausdorff distance between a geodesic from $x$
  to $y$ and a geodesic from $y$ to $x$ to be bounded. (The Hausdorff
  distance is the smallest $R$ such that each is contained in an $R$
  Lipschitz neighborhood of the other). However, the notion of bounded
  combinatorics is a symmetric notion, as it is defined using distances in
  curve graphs. Since \Teich geodesics are independent of the order of the
  endpoints, we can also deduce the following corollary. 
  
  \begin{corollary}
     
     Suppose $x, y \in \T$ are thick and have bounded combinatorics. 
     Then the Hausdorff distance between any Lipschitz 
     geodesic from $x$ to $y$ and any Lipschitz geodesic 
     from $y$ to $x$ is uniformly bounded. 

  \end{corollary}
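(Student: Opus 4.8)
The plan is to deduce this directly from \thmref{Fellow}, exploiting that both the \Teich geodesic and the property of having bounded combinatorics are insensitive to the order of $x$ and $y$. Write $\GL^{+}$ for an arbitrary Lipschitz geodesic from $x$ to $y$, write $\GL^{-}$ for an arbitrary Lipschitz geodesic from $y$ to $x$, and let $\GT$ denote, as an unparametrized subset of \T, the \Teich geodesic with endpoints $x$ and $y$.

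First I would apply \thmref{Fellow} to the ordered pair $(x,y)$: since $x$ and $y$ are thick with bounded combinatorics, $\GL^{+}$ fellow travels $\GT$, so there is a constant $R$ depending only on $S$ such that every point of $\GL^{+}$ lies within Lipschitz distance $R$ of a point of $\GT$ and every point of $\GT$ lies within Lipschitz distance $R$ of a point of $\GL^{+}$ — and, crucially, these bounds hold in \emph{both} directions of the asymmetric metric, because fellow traveling in the Lipschitz metric is defined with $\max\{\dL(\param,\param),\dL(\param,\param)\}$ and because the proof of \thmref{Fellow} forces $\GL^{+}$ to be cobounded and invokes \lemref{Asymmetry} on the thick set $\GT$. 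Next, bounded combinatorics is a symmetric condition — it is defined through distances $d_Y(\mu_x,\mu_y)$ in curve graphs of subsurfaces, which do not depend on the ordering of the markings — so the pair $(y,x)$ also satisfies the hypotheses of \thmref{Fellow}. Applying the theorem to $(y,x)$, and using that the \Teich geodesic from $y$ to $x$ is the same subset of \T as $\GT$, we conclude that $\GL^{-}$ also fellow travels $\GT$, two-sidedly, with the same constant $R$ (enlarge $R$ if necessary so that a single constant works for both).

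To finish, fix $p\in\GL^{+}$. Choose $q\in\GT$ with $\dL(p,q)\le R$ and $\dL(q,p)\le R$, and then choose $p'\in\GL^{-}$ with $\dL(q,p')\le R$ and $\dL(p',q)\le R$. The triangle inequality gives $\dL(p,p')\le 2R$ and $\dL(p',p)\le 2R$, so $\GL^{+}$ is contained in the $2R$ Lipschitz neighborhood of $\GL^{-}$; running the same argument starting from a point of $\GL^{-}$ shows that $\GL^{-}$ is contained in the $2R$ Lipschitz neighborhood of $\GL^{+}$. Hence the Hausdorff distance between $\GL^{+}$ and $\GL^{-}$ is at most $2R$, a constant depending only on $S$.

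There is essentially no genuine obstacle here beyond bookkeeping; the only two points that deserve explicit mention are (i) that the fellow-traveling conclusion of \thmref{Fellow} is two-sided in the asymmetric metric — which it is, since both geodesics end up cobounded and \lemref{Asymmetry} is already folded into the proof — and (ii) that the symmetry of bounded combinatorics and of $\GT$ lets \thmref{Fellow} be applied verbatim to $(y,x)$ as well as to $(x,y)$. Everything else is the triangle inequality.
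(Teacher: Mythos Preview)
Your proposal is correct and is exactly the argument the paper has in mind: the paper does not write out a formal proof of this corollary, but the sentence preceding it (``the notion of bounded combinatorics is a symmetric notion \ldots\ Since \Teich geodesics are independent of the order of the endpoints, we can also deduce the following corollary'') is precisely your strategy of applying \thmref{Fellow} to both $(x,y)$ and $(y,x)$ against the common \Teich segment $\GT$ and then using the triangle inequality. Your explicit attention to the two-sidedness of fellow traveling in the asymmetric metric is appropriate and matches how the paper sets things up.
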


 
  \bibliographystyle{alpha}
  \bibliography{main}

  \end{document}